\theoremstyle{plain}
\newtheorem{thm}{Theorem}
\newtheorem{lem}[thm]{Lemma}
\newtheorem{prop}[thm]{Proposition}
\newtheorem{cor}[thm]{Corollary}
\theoremstyle{definition}
\newtheorem{defn}[thm]{Definition}
\newtheorem{oss}[thm]{Remark}
\newtheorem{ex}[thm]{Example}
\theoremstyle{remark}
\newcommand{\R}{\mathbb{R}}
\newcommand{\C}{\mathbb{C}}
\newcommand{\N}{\mathbb{N}}
\newcommand{\cA}{\mathcal{A}}
\newcommand{\cB}{\mathcal{B}}
\newcommand{\cF}{\mathcal{F}}
\newcommand{\cK}{\mathcal{K}}
\newcommand{\cQ}{\mathcal{Q}}
\newcommand{\cI}{\mathcal{I}}
\newcommand{\sph}[2]{S(#1,#2)}
\newcommand{\ball}[2]{B(#1,#2)}
\newcommand{\sect}[1]{T_{#1}}
\newcommand{\supp}{\mathrm{supp}}
\DeclareMathAlphabet{\mathcal}{OMS}{zplm}{m}{n}
\title{Harmonic Bergman projectors on homogeneous trees}
\author{Filippo De Mari\thanks{Dipartimento di Matematica, Dipartimento di Eccellenza 2023-2027, and MaLGa center, Università di Genova, Via Dodecaneso 35, Genova, Italy, email: demari@dima.unige.it} \and Matteo Monti\thanks{Dipartimento di Scienze Matematiche “Giuseppe Luigi Lagrange”, Politecnico di Torino, Corso Duca degli Abruzzi 24, 10129, Torino, Italy, email: \mbox{matteo.monti@polito.it}, maria.vallarino@polito.it} \and Maria Vallarino\footnotemark[2]}
\begin{document}
	
	\maketitle
	
	\begin{abstract}
		In this paper we investigate some properties of the harmonic Bergman spaces $\mathcal A^p(\sigma)$ on a $q$-homogeneous tree, where $q\geq 2$, $1\leq p<\infty$, and $\sigma$ is a finite measure on the tree with radial decreasing density, hence nondoubling. These spaces were introduced by  J.~Cohen, F.~Colonna, M.~Picardello and D.~Singman. When $p=2$ they are reproducing kernel Hilbert spaces and we compute explicitely their reproducing kernel. We then study the boundedness properties of the Bergman projector on $L^p(\sigma)$ for $1<p<\infty$ and their weak type (1,1) boundedness for radially exponentially decreasing measures on the tree. The weak type (1,1) boundedness is a consequence of the fact that the Bergman kernel satisfies an appropriate integral H\"ormander's condition.  
	\end{abstract}

	%
	%
	\section*{Introduction}
	%
	%
	\let\thefootnote\relax\footnotetext{This work is partially supported by the project "Harmonic analysis on continuous and discrete structures" funded by Compagnia di San Paolo 
		(Cup E13C21000270007). Furthermore, the authors are members of the Gruppo Nazionale per l’Analisi Matematica, la Probabilità e le loro Applicazioni (GNAMPA) of the Istituto Nazionale di Alta Matematica (INdAM). }
	\let\thefootnote\relax\footnotetext{Keywords: Bergman spaces; homogeneous trees; Bergman projectors; Calder\'on-Zygmund theory.}
	\let\thefootnote\relax\footnotetext{Mathematics Subject Classification (2010): 05C05; 46E22; 43A85.}
	
	The main focus of this paper is on the  projectors associated to the  harmonic Bergman  spaces on homogeneous trees introduced in~\cite{ccps}.  The Bergman spaces $\cA^p(\sigma)$, $1\leq p<\infty$,  are in some ways the harmonic analogues of the classical holomorphic Bergman spaces on the hyperbolic disk, whereby $p$-integrability is relative to the reference measure $\sigma$ on the tree, that is a finite measure with radial density with respect to~the counting measure, and where harmonicity is relative to the so-called combinatorial Laplacian. The analogy between the hyperbolic disk and the homogeneous tree inspires many ideas behind our constructions (see~\cite{BW},~\cite{CCEmbedding}).
	
	The space $\cA^2(\sigma)$ is, as expected,  a reproducing kernel Hilbert space (RKHS) and the problem of understanding the associated projectors hinges on the explicit knowledge of the kernel,  an information that we derive by introducing a somewhat canonical basis for $\cA^2(\sigma)$. The core of this contribution is devoted to proving that, for a prototypical class of measures, the extension of the Bergman projector is bounded on $L^p(\sigma)$ if and only if $p>1$, and is of weak type~(1,1). The results are thus almost faithful reformulations of those that hold true for the holomorphic Bergman spaces on the hyperbolic disk (\cite{bbgnpr},~\cite{denghuang},~\cite{forellirudin},~\cite{stein}, and~\cite{zhuoperator}), but many of the key ingredients, first and foremost the explicit determination of the reproducing kernel, call for a rather different approach.
	\bigskip
	
	Let $X$ be a  homogeneous tree. A function on the tree is said to be harmonic if the mean of its values on the neighbors of any vertex coincides with the value at that vertex.
	J.~Cohen, F.~Colonna, M.~Picardello, and D.~Singman introduce  harmonic  Bergman spaces on homogeneous trees in~\cite{ccps}. They consider a family of reference measures which consists of finite measures that are absolutely continuous with respect to the counting measure and whose Radon-Nikodym derivative $\sigma$ is a radial strictly positive decreasing function on $X$. For every $1\leq p<\infty$, the harmonic Bergman space $\cA^p(\sigma)$ is the closed subspace of $L^p(\sigma)$ consisting of harmonic functions. The requirement for the measure $\sigma$ to be finite is suggested by the fact that the only harmonic function which is $p$-integrable with respect to~the counting measure is the null function.

	In the context of the hyperbolic disk, when $p=2$, the weighted Bergman spaces are RKHS, and the holomorphic Bergman kernel is known as well as the properties of the associated projector. Indeed, the extension of the holomorphic Bergman projector to the weighted $L^p$-spaces is bounded if and only if $p>1$, see \cite{forellirudin}, \cite{stein} and~\cite{zhuoperator}. Furthermore, it is of weak type $(1,1)$, see~\cite{bbgnpr} and~\cite{denghuang}.
	In our work, first of all, we show that $\cA^2(\sigma)$ is a RKHS for every reference measure $\sigma$ and we provide an explicit formula for the reproducing kernel $K_\sigma$ in Theorem~\ref{kernelformula}. 
	Since $\cA^2(\sigma)$ is closed in $L^2(\sigma)$, there exists an orthogonal projection $P_\sigma\colon L^2(\sigma)\to \cA^2(\sigma)$. We prove that, for a particular class of reference measures, $P_\sigma$ extends
	to a bounded operator from $L^p(\sigma)$ to $\cA^p(\sigma)$ if and only if $p>1$. Moreover, we show that $P_\sigma$ is of weak type (1,1): to do so we use a Calder\'on-Zygmund decomposition of integrable functions adapted to the measure $\sigma$. Notice that the measure $\sigma$ is not doubling with respect to the standard discrete metric on $X$, but it turns out to be doubling with respect to the so called Gromov metric (see Section \ref{seccz}). Hence a Calder\'on-Zygmund theory in this setting holds, and we show that the Bergman kernel satisfies an integral H\"ormander's condition related to such theory, so that it is of weak type (1,1).

	The measures we focus on are exponentially decreasing radial measures, i.e.~they are exponentially decreasing with respect to the distance from $o$ and can be viewed as natural counterparts of the measures involved in the definition of the standard weighted holomorphic Bergman spaces on the hyperbolic disk. 
	The fact that the extension of the projector to the weighted $L^p$-spaces is bounded if and only if $p>1$ follows from the fact that the projector coincides with a particular Toepliz-type operator (see Section~3.4 in~\cite{zhuoperator}).
	
	In the spirit of the results of~\cite{bekolle} and~\cite{BekolleBonami} on the disk, one could  investigate the boundedness of the Bergman projectors for general reference measures. In~\cite{ccps},~\cite{ccps2},~\cite{ccps3}, the authors introduce and study the optimal measures, a subclass of the reference measures which, roughly speaking, decrease fast as the distance from the origin increases. The exponentially decreasing radial measures are optimal in this sense. The study of the boundedness of the Bergman projector for optimal measures is still work in progress. Another related question is whether the Calder\'on-Zygmund theory that we develop here could be applied to other operators.

	The paper is organized as follows. In the first section we recall the definition of the harmonic Bergman spaces and, for every reference measure, we provide an orthonormal basis of the Hilbert space $\cA^2(\sigma)$. The basis plays a fundamental role in Section~\ref{sec:kernel} in the proof of the two formulae for the kernel of the RKHS $\cA^2(\sigma)$: the first is a recursive formula, while the second is the explicit formula of the kernel given in Theorem~\ref{kernelformula}.  In Section~\ref{sec:bound} we focus on the exponentially decreasing radial measures and state two results characterizing the boundedness of the extension of  a class of Toeplitz-type operators inspired by the operators considered in~\cite{zhuoperator} (see Theorem~\ref{3.11} and Theorem~\ref{3.12}). As a consequence, in Theorem~\ref{theobound} we show that the extension of the harmonic Bergman projector to the weighted $L^p$ spaces is bounded if and only if $p>1$. The last section is devoted to the Calder\'on-Zygmund decomposition of integrable functions (presented in Proposition~\ref{caldzyg}), the formulation of the H\"ormander's type condition, see~\eqref{hormander}, and the weak type (1,1) boundedness of the Bergman projectors is obtained as byproduct.
	
	In what follows, we shall use the symbol $\simeq$ ($\lesssim$, or $\gtrsim$) between two quantities when the left hand side is equal (smaller than or equal to, or greater than or equal to, respectively) to the right hand side up to the multiplication by a (fixed) positive constant. Furthermore we assume the following convention on sums: the sum is null whenever the starting index is greater than the final index. If $Y\subseteq X$, we denote by $\mathbbm{1}_Y$ the characteristic function of $Y$. Finally, we adopt the symbol $\sqcup$ for disjoint unions and $\lfloor x\rfloor$ for the largest integer less than or equal to $x\in\R$.

	\section{Harmonic Bergman spaces}\label{sectone}
	
	\subsection{Preliminaries on homogeneous trees}
	We present some preliminary notions and results on homogeneous trees; for more details we refer to~\cite{cart},~\cite{cms},~\cite{cms2000},~\cite{ftn}. 
	
	A \textit{graph} is a pair $(X,\mathfrak{E})$, where $X$ is the set of \textit{vertices} and $\mathfrak{E}$ is the family of \textit{unoriented edges}, where an edge is a two-element subset of $X$. If two vertices $u,v$ in $X$ are joined by an edge, they are called \textit{adjacent} and this is denoted by $u\sim v$. A \textit{tree} is an undirected, connected, cycle-free graph. A $q$-\textit{homogeneous} tree is a tree in which each vertex has exactly $q+1$ adjacent vertices.  With slight abuse, we refer to the set of vertices $X$ as the tree itself. We fix an origin $o\in X$. 
	
	From now on we consider a $q$-homogeneous tree $X$ with $q\geq 2$.
	Given $u,v\in X$, with $u\neq v$, we denote by $[u,v]$ the unique ordered $t$-uple $(x_0=u,x_1,\dots, x_{t-1}=v)\in X^t$, where $\{x_i,x_{i+1}\}\in\mathfrak{E}$ and all the $x_i$ are distinct. We say that $[u,v]$ is the path starting at $u$ and ending at $v$. With slight abuse of notation, if $[u,v]=(x_0,\dots,x_{t-1})$ we write $x_i\in [u,v]$, $i\in\{0,\dots,t-1\}$. In particular, if $u$ and $v$ are adjacent, both $[u,v],[v,u]\in X^2$ are oriented, unlike the edge $\{u,v\}\in \mathfrak{E}$ which is not. A homogeneous tree $X$ carries a natural distance $d\colon X\times X\to\mathbb{N}$, where for every $u,v\in X$ the distance $d(u,v)$ is the minimal length of a \textit{path} joining $u$ and $v$.
	If $v\in X$, then we denote by $S(v,n)$ and $B(v,n)$  the sphere and the ball centered at $v$ with radius $n\in\mathbb{N}$, respectively, i.e.,
	$$ \sph{v}{n} =\{x\in X\colon d(v,x)= n \}\qquad {\rm{and}}\qquad  \ball{v}{n}=\{x\in X\colon d(v,x)\leq n \}.$$
	It is straightforward to check that 
	\begin{equation}\label{cardsfera}
		\#  \sph{v}{n}=\begin{cases*}
			1, &$n=0$,\\
			(q+1)q^{n-1},&$n\neq 0$.
		\end{cases*} 
	\end{equation}
	We call \textit{norm} of a vertex $v$ in $X$ its distance from $o$, i.e. $|v|=d(o,v)$. We say that a function $f$ on $X$ is \textit{radial} (with respect to~$o$) if its value at a vertex $x\in X$ depends only on $|x|$. If $v\neq o$, then we define the {\it sector of $v$} as the subset
	\[\sect{v}:=\{x\in X\colon [o,v]\subseteq [o,x] \},\]
	and we adopt the convention $\sect{o}=X$. Moreover, we call {\it successors} of $v$ the elements of the set
	$s(v)=\{u\in X\colon u\sim v,|u|=|v|+1\}.$ Evidently,
	\[\#  s(v)=\begin{cases}
		q,&\text{if }v\neq o;\\
		q+1,&\text{if }v=o.
	\end{cases} \] 
	For every $v\neq o$ we call predecessor of $v$ and denote by $p(v)$ the only neighbor of $v$ which is not a successor of $v$; it follows that $|p(v)|=|v|-1$. The vertex $o$ is the only one having no predecessors, and $s(o)=\sph{o}{1}$. This defines the predecessor function $p\colon X\setminus\{o\}\to X$, and, for every positive integer $\ell$, its $\ell$-fold composition $p^\ell\colon X\setminus\ball{o}{\ell-1}\to X$ is the $\ell$-th predecessor function.

	\subsection{Harmonic functions and harmonic Bergman spaces}\label{sub:harm}
	
	
	\begin{defn}
		Let $f$ be a complex valued function on $X$. The \textit{combinatorial Laplacian} of $f$ is defined by
		\[Lf(v):=f(v)-\frac{1}{q+1}\sum_{u\sim v}f(u),\qquad  v\in X.\] 
		We say that $f$ is \textit{harmonic} on $Y\subseteq X$ if $Lf=0$ on $Y$. Equivalently, $f$ is harmonic on $Y$ if 
		\begin{equation}\label{harmonicity}
			f(v)=\frac{1}{q+1}\sum_{u\sim v}f(u), \qquad v\in Y.
		\end{equation} 
		We shall call a function harmonic if it is harmonic on $X$. 
	\end{defn}

	It is easy to prove that a function is harmonic if and only if for every $v\in X$ and $n\in \N$, we have
	\begin{equation}\label{harmoball}
		f(v)=\frac{1}{\# \sph{v}{n}}\sum_{d(v,x)=n}f(x).
	\end{equation}
	
	The harmonicity property \eqref{harmonicity} implies a certain rigidity for the function. In particular, the value of a harmonic function at a vertex $y\in X$ “propagates'' to every layer of the sector $\sect{y}$, as showed in the following proposition, which is a modified version of~\cite[Lemma 4.1]{ccps}. In that lemma, the authors show that a function which is harmonic and radial on a sector $\sect{y}$, $y\in X\setminus\{o\}$, is completely described by its values at $y$ and $p(y)$. We consider a harmonic function on the sector $\sect{y}$, removing the radiality assumption, and we formulate a result for its average on $\sph{o}{n}\cap\sect{y}$, $n\geq |y|$. We omit the proof since it is an easy adaptation of the proof of~\cite[Lemma~4.1]{ccps}.
	\begin{prop}\label{prop1}
		Let $y\in X\setminus \{o\}$. If $f\colon X\rightarrow\mathbb{C}$ is harmonic on $\sect{y}$, then for every $n\in \mathbb{N}$, $n\geq|y|$, we have
		\begin{equation}\label{harmTybis}
			\sum_{\substack{|x|=n\\x\in \sect{y}}}f(x)=\left(\sum_{j=0}^{n-|y|}q^j\right)f(y)-\left(\sum_{j=0}^{n-|y|-1}q^j\right)f(p(y)).
		\end{equation}	
		Furthermore, if $f\colon X\to \C$ is radial on $T_y$ and satisfies \eqref{harmTybis} for every $n\geq |y|$, then $f$ is harmonic on $\sect{y}$.
	\end{prop}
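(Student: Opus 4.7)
The plan is to proceed by induction on $n \geq |y|$, exploiting the fact that summing the local harmonicity identity at level $n-1$ produces a three-term linear recursion for the level sums
\[
S_n := \sum_{\substack{|x|=n \\ x \in \sect{y}}} f(x).
\]

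The base case $n=|y|$ is immediate: the only element of $\sect{y}$ at distance $|y|$ from $o$ is $y$ itself, and the right-hand side of \eqref{harmTybis} reduces to $f(y)$ since by convention the sum with upper index $-1$ is empty. For the inductive step I would fix $n\geq |y|+2$ and sum the harmonicity identity $(q+1)f(v)=\sum_{u\sim v}f(u)$ over all $v\in\sect{y}$ with $|v|=n-1$. Every vertex at level $n$ in $\sect{y}$ contributes once, while every vertex at level $n-2$ in $\sect{y}$ contributes $q$ times (as predecessor of its $q$ successors at level $n-1$), yielding
\[
(q+1)\,S_{n-1} = S_n + q\,S_{n-2}.
\]
The second base case $n=|y|+1$ must be handled separately, since the unique vertex at level $|y|$ in $\sect{y}$ is $y$ itself, whose predecessor lies outside the sector; summing harmonicity at $y$ gives $(q+1)f(y) = f(p(y)) + S_{|y|+1}$, matching \eqref{harmTybis} directly. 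It then remains to check that the coefficients $a_n := \sum_{j=0}^{n-|y|}q^j$ and $b_n := \sum_{j=0}^{n-|y|-1}q^j$ satisfy $(q+1)a_{n-1} = a_n + q\,a_{n-2}$, and analogously for $b_n$, which is an immediate telescoping identity and closes the induction.

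For the converse, assume $f$ is radial on $\sect{y}$ (so $f(x)=\phi(|x|)$ there) and that \eqref{harmTybis} holds for all $n\geq|y|$. Since $\#\{x\in\sect{y}:|x|=n\} = q^{n-|y|}$, the identity determines $\phi(n)$ explicitly in terms of $f(y)$ and $f(p(y))$. The recursion $(q+1)a_n = q\,a_{n-1} + a_{n+1}$ verified above (and its analog for $b_n$) then translates into $(q+1)\phi(n) = \phi(n-1) + q\,\phi(n+1)$ for every $n > |y|$. By radiality, this is precisely the harmonicity condition at any $v\in\sect{y}$ with $|v|>|y|$, whose $q+1$ neighbors split as one predecessor (at level $|v|-1$, inside $\sect{y}$) and $q$ successors (at level $|v|+1$, also inside $\sect{y}$). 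Finally, harmonicity at $v=y$ is read off from the $n=|y|+1$ case of \eqref{harmTybis}, which is the only instance where $f(p(y))$ contributes.

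The main technical delicacy is the boundary bookkeeping at the apex of the sector, where the predecessor of $y$ sits outside $\sect{y}$ and breaks the uniform counting that works for deeper levels; once the two base cases $n=|y|$ and $n=|y|+1$ are handled, everything reduces to the algebraic recursion for $a_n$ and $b_n$ and the elementary computation of $\#\{x\in\sect{y}:|x|=n\}$.
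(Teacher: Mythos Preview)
Your proof is correct. The induction on $n$ via the three-term recursion $S_n=(q+1)S_{n-1}-qS_{n-2}$, obtained by summing the harmonicity identity over the level $n-1$ of $\sect{y}$, together with the separate handling of the boundary cases $n=|y|$ and $n=|y|+1$, is exactly the right bookkeeping, and the converse follows cleanly from radiality as you describe.

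As for comparison: the paper does not actually give a proof of this proposition; it explicitly omits it, stating only that it is an easy adaptation of~\cite[Lemma~4.1]{ccps}, which treats the radial case. Your argument is the natural direct induction one would write down, and it is self-contained, whereas the paper defers to the cited reference. There is no substantive difference in method to point to---your recursion is precisely the mechanism that underlies the cited lemma as well.
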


	We introduce a technique which allows to extend a function which is harmonic on a ball  centered in $o$ to a function harmonic on $X$. Let $n\in\N$ and $g$ be a function on $X$ which is harmonic on $\ball{o}{n}$. It is easy to see that there are infinitely many ways to extend $g$ to a harmonic function on $X$ which coincides with $g$ on $\ball{o}{n+1}$.
	As we see next, there is however a unique harmonic function $g_n^H$ on $X$ which is radial when restricted on $\sect{y}$ for every $y\in\sph{o}{n+1}$.

	Let $x\in X\setminus\ball{o}{n}$. There exists a unique $y\in\sph{o}{n+1}$ such that $x\in\sect{y}$, and $y=p^{|x|-n-1}(x)$ (where $p^0={\rm id}_X$). Since we aim to construct $g^H_{n}$ radial and harmonic on $\sect{y}$, by Proposition~\ref{prop1} we have that

	\begin{align*}
		g_{n}^H(x)&=
		\frac{1}{\#  \sph{o}{|x|}\cap\sect{y}}\sum_{\substack{|z|=|x|,\\ z\in\sect{y}}}g_n^H(z)\\
		&=
		q^{|y|-|x|}\left[\left(\sum_{j=0}^{|x|-|y|}q^j\right)g(y)-\left(\sum_{j=0}^{|x|-|y|-1}q^j\right)g(p(y))\right]\\
		&=
		q^{n+1-|x|}\left[\left(\sum_{j=0}^{|x|-n-1}q^j\right)g(p^{|x|-n-1}(x))-\left(\sum_{j=0}^{|x|-n-2}q^j\right)g(p^{|x|-n}(x))\right]\\
		&=
		\left(\sum_{j=0}^{|x|-n-1}q^{-j}\right)g(p^{|x|-n-1}(x))-\left(\sum_{j=1}^{|x|-n-1}q^{-j}\right)g(p^{|x|-n}(x)).
	\end{align*}
	For simplicity we introduce the notation 
	\[a_n=\sum_{j=0}^{n}q^{-j}=\frac{q-q^{-n}}{q-1},\qquad n\in\N,\]
	and we set $a_{-1}=0$.
	Hence 
	\begin{equation}\label{gextension}
		g^H_{n}(x)=\begin{dcases}
			g(x),&|x|\leq n,\\
			a_{|x|-n-1}g(p^{|x|-n-1}(x))-\left(a_{|x|-n-1}-1\right)g(p^{|x|-n}(x)),&|x|>n.
		\end{dcases} 
	\end{equation}
	The function $g^H_{n}$ defined above is harmonic on $X$ by Proposition~\ref{prop1} and because 
	\[ X=\ball{o}{n}\cup\bigcup_{y\in\sph{o}{n+1}}\sect{y}.\]
	Observe that  $g^H_{n}$ is indeed harmonic on $\ball{o}{n}$ because $a_0=1$ and $a_{-1}=0$, yield $g^H_{n}=g$ on $\ball{o}{n+1}$, and not only on $\ball{o}{n}$. Furthermore,
	the extension $g^H_{n}$ is radial on every sector \lq\lq starting'' from a point in $\sph{o}{n+1}$ by construction.

	\begin{center}
		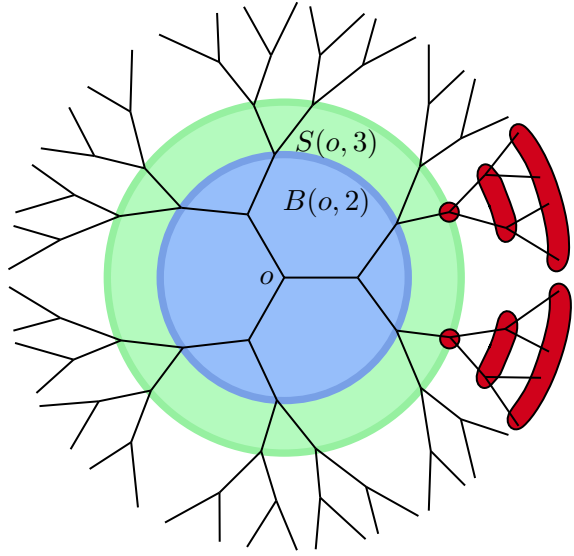
\begin{figure}[h]
			\centering

			
			\tikzset {_bhbefrn48/.code = {\pgfsetadditionalshadetransform{ \pgftransformshift{\pgfpoint{0 bp } { 0 bp }  }  \pgftransformrotate{0 }  \pgftransformscale{2 }  }}}
			\pgfdeclarehorizontalshading{_0stppf1v2}{150bp}{rgb(0bp)=(0.95,0.77,0.77);
				rgb(37.5bp)=(0.95,0.77,0.77);
				rgb(62.5bp)=(0.82,0.01,0.11);
				rgb(100bp)=(0.82,0.01,0.11)}
			\tikzset{_tizpfi7l4/.code = {\pgfsetadditionalshadetransform{\pgftransformshift{\pgfpoint{0 bp } { 0 bp }  }  \pgftransformrotate{0 }  \pgftransformscale{2 } }}}
			\pgfdeclarehorizontalshading{_xj6ykzquq} {150bp} {color(0bp)=(transparent!87);
				color(37.5bp)=(transparent!87);
				color(62.5bp)=(transparent!81);
				color(100bp)=(transparent!81) } 
			\pgfdeclarefading{_pqjnbhbmi}{\tikz \fill[shading=_xj6ykzquq,_tizpfi7l4] (0,0) rectangle (50bp,50bp); } 
			
			
			\tikzset {_9mzvgomtb/.code = {\pgfsetadditionalshadetransform{ \pgftransformshift{\pgfpoint{0 bp } { 0 bp }  }  \pgftransformrotate{0 }  \pgftransformscale{2 }  }}}
			\pgfdeclarehorizontalshading{_2oobo7vya}{150bp}{rgb(0bp)=(0.95,0.77,0.77);
				rgb(37.5bp)=(0.95,0.77,0.77);
				rgb(62.5bp)=(0.82,0.01,0.11);
				rgb(100bp)=(0.82,0.01,0.11)}
			\tikzset{_4j8c4uvg0/.code = {\pgfsetadditionalshadetransform{\pgftransformshift{\pgfpoint{0 bp } { 0 bp }  }  \pgftransformrotate{0 }  \pgftransformscale{2 } }}}
			\pgfdeclarehorizontalshading{_k7n897beo} {150bp} {color(0bp)=(transparent!87);
				color(37.5bp)=(transparent!87);
				color(62.5bp)=(transparent!81);
				color(100bp)=(transparent!81) } 
			\pgfdeclarefading{_nmuib15vf}{\tikz \fill[shading=_k7n897beo,_4j8c4uvg0] (0,0) rectangle (50bp,50bp); } 
			
			
			\tikzset {_wm6utsgwi/.code = {\pgfsetadditionalshadetransform{ \pgftransformshift{\pgfpoint{0 bp } { 0 bp }  }  \pgftransformscale{1 }  }}}
			\pgfdeclareradialshading{_aeg1z864q}{\pgfpoint{0bp}{0bp}}{rgb(0bp)=(0.72,0.91,0.53);
				rgb(4.196428571428571bp)=(0.72,0.91,0.53);
				rgb(8.035714285714286bp)=(0.72,0.91,0.53);
				rgb(25bp)=(0.72,0.91,0.53);
				rgb(400bp)=(0.72,0.91,0.53)}
			\tikzset{_7hsyc9jl9/.code = {\pgfsetadditionalshadetransform{\pgftransformshift{\pgfpoint{0 bp } { 0 bp }  }  \pgftransformscale{1 } }}}
			\pgfdeclareradialshading{_webrkgomf} { \pgfpoint{0bp} {0bp}} {color(0bp)=(transparent!69);
				color(4.196428571428571bp)=(transparent!69);
				color(8.035714285714286bp)=(transparent!58);
				color(25bp)=(transparent!36);
				color(400bp)=(transparent!36)} 
			\pgfdeclarefading{_j3w0gmo3z}{\tikz \fill[shading=_webrkgomf,_7hsyc9jl9] (0,0) rectangle (50bp,50bp); } 
			
			
			\tikzset {_vw9z38ine/.code = {\pgfsetadditionalshadetransform{ \pgftransformshift{\pgfpoint{0 bp } { 0 bp }  }  \pgftransformscale{1 }  }}}
			\pgfdeclareradialshading{_hdp2ch4yu}{\pgfpoint{0bp}{0bp}}{rgb(0bp)=(0.29,0.56,0.89);
				rgb(0bp)=(0.29,0.56,0.89);
				rgb(25bp)=(0.29,0.56,0.89);
				rgb(400bp)=(0.29,0.56,0.89)}
			\tikzset{_rdahlcf6z/.code = {\pgfsetadditionalshadetransform{\pgftransformshift{\pgfpoint{0 bp } { 0 bp }  }  \pgftransformscale{1 } }}}
			\pgfdeclareradialshading{_hed0nxjd0} { \pgfpoint{0bp} {0bp}} {color(0bp)=(transparent!73);
				color(0bp)=(transparent!73);
				color(25bp)=(transparent!50);
				color(400bp)=(transparent!50)} 
			\pgfdeclarefading{_ru7lgr3ux}{\tikz \fill[shading=_hed0nxjd0,_rdahlcf6z] (0,0) rectangle (50bp,50bp); } 
			\tikzset{every picture/.style={line width=0.75pt}} 
			
			\begin{tikzpicture}[scale=1.25,x=0.75pt,y=0.75pt,yscale=-1,xscale=1]

				\fill
				[color={rgb, 255:red, 180; green, 250; blue, 190 }  ,draw opacity=0.4 ]
				(239.88,150) .. controls (239.88,110.99) and (271.49,79.38) .. (310.5,79.38) .. controls (349.51,79.38) and (381.13,110.99) .. (381.13,150) .. controls (381.13,189.01) and (349.51,220.63) .. (310.5,220.63) .. controls (271.49,220.63) and (239.88,189.01) .. (239.88,150) -- cycle ; 
				\draw [color={rgb, 255:red, 150; green, 240; blue, 160 }  ,draw opacity=1 ][line width=2.75]  (239.88,150) .. controls (239.88,110.99) and (271.49,79.38) .. (310.5,79.38) .. controls (349.51,79.38) and (381.13,110.99) .. (381.13,150) .. controls (381.13,189.01) and (349.51,220.63) .. (310.5,220.63) .. controls (271.49,220.63) and (239.88,189.01) .. (239.88,150) -- cycle ; 
				
				\fill  
				[color={rgb, 255:red, 150; green, 190; blue, 250 }  ,draw opacity=1 ] (261,150) .. controls (261,122.66) and (283.16,100.5) .. (310.5,100.5) .. controls (337.84,100.5) and (360,122.66) .. (360,150) .. controls (360,177.34) and (337.84,199.5) .. (310.5,199.5) .. controls (283.16,199.5) and (261,177.34) .. (261,150) -- cycle ; 
				\draw  [color={rgb, 255:red, 120; green, 160; blue, 230 }  ,draw opacity=1 ][line width=2.75]  (261,150) .. controls (261,122.66) and (283.16,100.5) .. (310.5,100.5) .. controls (337.84,100.5) and (360,122.66) .. (360,150) .. controls (360,177.34) and (337.84,199.5) .. (310.5,199.5) .. controls (283.16,199.5) and (261,177.34) .. (261,150) -- cycle ; 

				\draw  [draw opacity=0][fill={rgb, 255:red, 208; green, 2; blue, 27 }  ,fill opacity=0.26 ] (405.97,209.77) .. controls (410.22,206.04) and (414.85,197.87) .. (417.77,190.97) .. controls (420.69,184.07) and (425.25,170.82) .. (424.89,160.21) .. controls (424.53,149.59) and (415.7,150.78) .. (416.43,158.14) .. controls (417.16,165.51) and (413.91,176.65) .. (410.25,186.69) .. controls (406.6,196.73) and (403.45,199.45) .. (400.62,204.02) .. controls (397.78,208.59) and (401.72,213.51) .. (405.97,209.77) -- cycle ;
				\draw  [draw opacity=0][fill={rgb, 255:red, 208; green, 2; blue, 27 }  ,fill opacity=0.26 ] (393.47,192.75) .. controls (396.96,188.54) and (395.98,190.84) .. (398.67,184.27) .. controls (401.35,177.7) and (404.21,172.46) .. (403.38,167.76) .. controls (402.55,163.06) and (397.28,161.2) .. (395.71,167.82) .. controls (394.13,174.43) and (391.24,180.66) .. (392.69,177.38) .. controls (394.15,174.1) and (391.61,179.68) .. (388.37,186.23) .. controls (385.13,192.78) and (389.98,196.96) .. (393.47,192.75) -- cycle ;
				\draw  [draw opacity=0][fill={rgb, 255:red, 208; green, 2; blue, 27 }  ,fill opacity=0.26 ] (372.73,174.54) .. controls (372.65,176.66) and (374.3,178.44) .. (376.42,178.51) .. controls (378.53,178.59) and (380.31,176.94) .. (380.39,174.82) .. controls (380.46,172.71) and (378.81,170.93) .. (376.7,170.85) .. controls (374.58,170.78) and (372.8,172.43) .. (372.73,174.54) -- cycle ;
				
				\draw  [draw opacity=0][fill={rgb, 255:red, 208; green, 2; blue, 27 }  ,fill opacity=0.26 ] (407,89.67) .. controls (411.11,93.56) and (415.44,101.89) .. (418.11,108.89) .. controls (420.78,115.89) and (424.86,129.29) .. (424.11,139.89) .. controls (423.37,150.49) and (414.58,148.98) .. (415.58,141.64) .. controls (416.58,134.31) and (413.74,123.06) .. (410.44,112.89) .. controls (407.15,102.72) and (404.11,99.89) .. (401.44,95.22) .. controls (398.78,90.56) and (402.89,85.78) .. (407,89.67) -- cycle ;
				\draw  [draw opacity=0][fill={rgb, 255:red, 208; green, 2; blue, 27 }  ,fill opacity=0.26 ] (393.89,106.22) .. controls (397.22,110.56) and (396.33,108.22) .. (398.78,114.89) .. controls (401.22,121.56) and (403.89,126.89) .. (402.89,131.56) .. controls (401.89,136.22) and (396.56,137.89) .. (395.22,131.22) .. controls (393.89,124.56) and (391.22,118.22) .. (392.56,121.56) .. controls (393.89,124.89) and (391.56,119.22) .. (388.56,112.56) .. controls (385.56,105.89) and (390.56,101.89) .. (393.89,106.22) -- cycle ;
				\draw  [draw opacity=0][fill={rgb, 255:red, 208; green, 2; blue, 27 }  ,fill opacity=0.26 ] (372.5,123.67) .. controls (372.5,121.55) and (374.22,119.83) .. (376.33,119.83) .. controls (378.45,119.83) and (380.17,121.55) .. (380.17,123.67) .. controls (380.17,125.78) and (378.45,127.5) .. (376.33,127.5) .. controls (374.22,127.5) and (372.5,125.78) .. (372.5,123.67) -- cycle ;
				
				\draw    (376.33,123.67) -- (390.67,108.67) ;
				\draw    (399,130) -- (420.2,143) ;
				\draw    (376.33,123.67) -- (399,130) ;
				\draw    (390.67,108.67) -- (404,91.67) ;
				\draw    (310.5,150) -- (339.81,150) ;
				\draw    (339.81,150) -- (355.33,171.33) ;
				\draw    (339.81,150) -- (355.33,128.33) ;
				\draw    (355.33,171.33) -- (365,194.33) ;
				\draw    (355.33,171.33) -- (376.67,174) ;
				\draw    (355.33,128.33) -- (376.33,123.67) ;
				\draw    (355.33,128.33) -- (364.67,105.33) ;
				\draw    (365,194.33) -- (369,219.8) ;
				\draw    (365,194.33) -- (381,207) ;
				\draw    (376.67,174) -- (390.67,190) ;
				\draw    (376.67,174) -- (398.67,170.67) ;
				\draw    (369,219.8) -- (368.2,243.8) ;
				\draw    (369,219.8) -- (382.67,233.67) ;
				\draw    (381,207) -- (386.67,230.67) ;
				\draw    (381,207) -- (400,213.33) ;
				\draw    (390.67,190) -- (404,209.67) ;
				\draw    (390.67,190) -- (413,190.33) ;
				\draw    (398.67,170.67) -- (420.2,155.4) ;
				\draw    (398.67,170.67) -- (416.2,179.8) ;
				\draw    (364.67,105.33) -- (368.2,80.2) ;
				\draw    (364.67,105.33) -- (381,94) ;
				\draw    (368.2,80.2) -- (368.2,56.2) ;
				\draw    (368.2,80.2) -- (382.33,66.67) ;
				\draw    (381,94) -- (386.67,70) ;
				\draw    (381,94) -- (400,85.33) ;
				\draw    (390.67,108.67) -- (412.67,109.67) ;
				\draw    (399,130) -- (416.2,120.2) ;
				\draw    (310.49,150) -- (295.89,124.58) ;
				\draw    (295.89,124.58) -- (306.66,100.5) ;
				\draw    (295.89,124.58) -- (269.38,121.91);
				\draw    (306.66,100.5) -- (321.79,80.66) ;
				\draw    (306.66,100.5) -- (298.35,80.67) ;
				\draw    (269.38,121.91) -- (254.87,106.03) ;
				\draw    (269.38,121.91) -- (244.78,125.27) ;
				\draw    (321.79,80.66) -- (341.89,64.51) ;
				\draw    (321.79,80.66) -- (324.81,60.48) ;
				\draw    (298.35,80.67) -- (305.25,60.56) ;
				\draw    (298.35,80.67) -- (284.5,63.25) ;
				\draw    (341.89,64.51) -- (363.1,53.25) ;
				\draw    (341.89,64.51) -- (347.1,45.75) ;
				\draw    (324.81,60.48) -- (342.51,43.78) ;
				\draw    (324.81,60.48) -- (320.84,40.85) ;
				\draw    (305.25,60.56) -- (315.67,39.2) ;
				\draw    (305.25,60.56) -- (294.42,41.03) ;
				\draw    (284.5,63.25) -- (260.54,52.18) ;
				\draw    (284.5,63.25) -- (283.69,43.5) ;
				\draw    (244.78,125.27) -- (221.23,134.73) ;
				\draw    (244.78,125.27) -- (226.82,116.76) ;
				\draw    (254.87,106.03) -- (234.73,101.07) ;
				\draw    (254.87,106.03) -- (249.07,83.22) ;
				\draw    (221.23,134.73) -- (200.42,146.68) ;
				\draw    (221.23,134.73) -- (202.45,129.21) ;
				\draw    (226.82,116.76) -- (203.19,123.79) ;
				\draw    (226.82,116.76) -- (209.84,104.6) ;
				\draw    (234.73,101.07) -- (213.34,97.97) ;
				\draw    (234.73,101.07) -- (224.64,81.49) ;
				\draw    (249.07,83.22) -- (232.01,73.18) ;
				\draw    (249.07,83.22) -- (249.79,58.36) ;
				\draw    (310.72,149.61) -- (296.32,175.14) ;
				\draw    (296.32,175.14) -- (270.11,178.18) ;
				\draw    (296.32,175.14) -- (307.56,199.3) ;
				\draw    (270.11,178.18) -- (245.33,175.29) ;
				\draw    (270.11,178.18) -- (257.3,195.45) ;
				\draw    (307.56,199.3) -- (301.31,219.89) ;
				\draw    (307.56,199.3) -- (323.01,218.73) ;
				\draw    (245.33,175.29) -- (221.18,166.26) ;
				\draw    (245.33,175.29) -- (226.43,183.01) ;
				\draw    (257.3,195.45) -- (236.49,199.78) ;
				\draw    (257.3,195.45) -- (249.4,216.24) ;
				\draw    (221.18,166.26) -- (200.67,153.77) ;
				\draw    (221.18,166.26) -- (202.39,171.35) ;
				\draw    (226.43,183.01) -- (203.04,176.31) ;
				\draw    (226.43,183.01) -- (211.58,196.44) ;
				\draw    (236.49,199.78) -- (212.81,201.73) ;
				\draw    (236.49,199.78) -- (225.23,219.07) ;
				\draw    (249.4,216.24) -- (252.11,242.5) ;
				\draw    (249.4,216.24) -- (232.83,227.03) ;
				\draw    (323.01,218.73) -- (343.16,234.16) ;
				\draw    (323.01,218.73) -- (324.85,238.53) ;
				\draw    (301.31,219.89) -- (307.33,239.74) ;
				\draw    (301.31,219.89) -- (284.65,236.52) ;
				\draw    (343.16,234.16) -- (364.06,245.95) ;
				\draw    (343.16,234.16) -- (348,253.12) ;
				\draw    (324.85,238.53) -- (342.97,255.26) ;
				\draw    (324.85,238.53) -- (323.06,259.34) ;
				\draw    (307.33,239.74) -- (315.58,259.71) ;
				\draw    (307.33,239.74) -- (295.65,258.41) ;
				\draw    (284.65,236.52) -- (284.74,256.31) ;
				\draw    (284.65,236.52) -- (262.91,248.59) ;
				
				\node[anchor=east]  at (310.49,150){$o$};
				\node  at (327.49,120){$\ball{o}{2}$};
				\node  at (331.49,96){$\sph{o}{3}$};	
				
			\end{tikzpicture}

			\caption{
				The function $g$ is harmonic on $\ball{o}{2}$, that is the set of vertices in the blue area. The function $g^H_{2}$ is obtained by extending the values of $g$ in $\sph{o}{3}$ (the green area) along sectors in such a way that $g^H_{2}$ is harmonic on $X$ and constant on the vertices lying on the same red arc, that is on the \lq\lq layers'' of the sectors.}
		\end{figure}
	\end{center}

	\subsection{Harmonic Bergman spaces}\label{sub:berg}
	Homogeneous trees are classically endowed with the counting measure. The main feature of such measure is the invariance under the group of isometries of the tree. 
	When studying spaces of harmonic functions, this measure is however inadequate because the only harmonic function that is $p$-summable, $1\leq p<\infty$, with respect to~the counting measure is the null function, as we show in the following statement. 
	\begin{prop}\label{corollharmnull}
		If $f$ is a harmonic function in $L^p(X)$, $1\leq p<\infty$, then $f$ is the null function.
	\end{prop}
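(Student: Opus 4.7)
My plan is to exploit the mean value property on spheres, equation~\eqref{harmoball}, together with the fact that spheres in a $q$-homogeneous tree (with $q\geq 2$) grow exponentially, so a fixed bounded $L^p$-mass cannot support any nontrivial pointwise value.

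Fix an arbitrary vertex $v\in X$ and an integer $n\in\N$. By~\eqref{harmoball},
\[
f(v)=\frac{1}{\#\sph{v}{n}}\sum_{d(v,x)=n}f(x).
\]
Taking absolute values and applying Jensen's inequality to the convex function $t\mapsto t^p$ on $[0,\infty)$ (or, for $p=1$, just the triangle inequality), I get
\[
|f(v)|^p\leq\frac{1}{\#\sph{v}{n}}\sum_{d(v,x)=n}|f(x)|^p,
\]
that is,
\[
\#\sph{v}{n}\cdot|f(v)|^p\leq\sum_{x\in\sph{v}{n}}|f(x)|^p\leq\|f\|_{L^p(X)}^p.
\]

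Now I would invoke~\eqref{cardsfera}, which gives $\#\sph{v}{n}=(q+1)q^{n-1}$ for $n\geq 1$, so that $\#\sph{v}{n}\to\infty$ as $n\to\infty$ since $q\geq 2$. Letting $n\to\infty$ in the inequality above, the right-hand side stays bounded while the coefficient on the left blows up, forcing $|f(v)|^p=0$, hence $f(v)=0$. Since $v$ was arbitrary, $f\equiv 0$.

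There is no real obstacle here: the argument is just a quantitative version of the observation that on an exponentially growing graph a nonzero harmonic function cannot have finite $L^p$ mass, because the value at one point is controlled by an average over a sphere whose cardinality diverges. The only minor point to mention is that the sphere centered at $v$ (not necessarily at $o$) still has cardinality $(q+1)q^{n-1}$ by homogeneity, which is what makes~\eqref{cardsfera} applicable with $v$ in place of $o$.
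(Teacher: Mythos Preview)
Your proof is correct. It uses the same two ingredients as the paper---the mean value property~\eqref{harmoball} and the exponential growth of spheres~\eqref{cardsfera}---but combines them more directly. The paper instead first runs the Jensen-type inequality at the level of neighbors, sums over all vertices, and observes that the chain of inequalities collapses to an equality; from this it deduces that $|f|^p$ is itself harmonic, and only then invokes~\eqref{harmoball} for $|f|^p$ together with the sphere count to reach a contradiction. Your argument bypasses the intermediate fact that $|f|^p$ is harmonic by applying Jensen directly to the spherical mean of $f$ and letting the radius go to infinity. This is shorter and entirely sufficient for the statement; the paper's route has the minor side benefit of exhibiting the harmonicity of $|f|^p$, but that fact is not used elsewhere.
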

	\begin{proof}
		Suppose that $f$ is harmonic. We have that
		\begin{align*}
			\sum_{x\in X}|f(x)|^p=&\sum_{n=0}^{+\infty}\sum_{|x|=n}|f(x)|^p\\
			=&\frac{1}{(q+1)^p}\sum_{n=0}^{+\infty}\sum_{|x|=n}\left|\sum_{y\sim x}f(y)\right|^p\\
			\leq&\frac{(q+1)^{p-1}}{(q+1)^p}\sum_{n=0}^{+\infty}\sum_{|x|=n}\sum_{y\sim x}|f(y)|^p\\
			=&\frac{1}{q+1}(q+1)\|f\|_{L^p(X)}^p=\|f\|_{L^p(X)}^p<+\infty,
		\end{align*}
		since every vertex is neighbor of exactly $q+1$ other vertices.
		Hence the unique inequality in the computation above is an equality, so that  
		\[(q+1)^{p-1}\sum_{y\sim x}|f(y)|^p= \left|\sum_{y\sim x}f(y)\right|^p=(q+1)^p|f(x)|^p,\]
		which means that $|f|^p$ is harmonic, too. If $f$ is not the null function, then there exists $v\in X$ such that $f(v)\neq 0$. Hence by~\eqref{harmoball}, we have
		\[\sum_{x\in X}|f(x)|^p=\sum_{n=0}^{+\infty}\sum_{d(v,x)=n}|f(x)|^p=|f(v)|^p\sum_{n=0}^{+\infty}\#  \sph{v}{n}=+\infty,\]
		which is a contradiction. Hence $f=0$.
	\end{proof}

	Since we are interested in Bergman spaces of harmonic functions, the previous proposition leads to consider finite measures on $X$.
	In~\cite{ccps}, the authors introduce
	harmonic Bergman spaces with respect to the following class of measures.
	\begin{defn}
		A {\it reference measure} on $X$ is a finite measure that is absolutely continuous with respect to~the counting measure and whose Radon-Nikodym derivative $\sigma$ is a radial strictly positive decreasing function on $X$. With slight abuse of notation we denote by $\sigma$ the reference measure, too.
		Given a reference measure $\sigma$ on $X$ for every $p\in [1,\infty)$ the \textit{Bergman space} $\mathcal{A}^p(\sigma)$ is the space of harmonic functions on $X$ such that
		\[\|f\|_{\mathcal{A}^p(\sigma)}^p:=\sum_{x\in X}|f(x)|^p\sigma(x)<+\infty. \]
	\end{defn}
		Every Bergman space $\mathcal A^p(\sigma)$ is a Banach space and when $p=2$, it is a Hilbert space with the scalar product
			\begin{equation}\label{scalprod}
					\langle f,g\rangle_{\mathcal A^2(\sigma)}:=\sum_{x\in X}f(x)\overline{g(x)}\sigma(x),\qquad f,g\in \mathcal A^2(\sigma). 
			\end{equation}
		
	If $\sigma$ is a reference measure on $X$, and if we denote by $\sigma_n$ the value of $\sigma$ on the sphere $S(0,n)$, then by~\eqref{cardsfera} the total mass of $\sigma$ is denoted by $B_\sigma$ and its value is
	\[B_\sigma=\sigma_0+\frac{q+1}{q}\sum_{n=1}^{+\infty}\sigma_nq^n<+\infty.\]
	\begin{ex}\label{ex}
		Let $\alpha>1$. Interesting examples of reference measures are the exponentially decreasing radial measures, consisting of the measures having density 
		\[\mu_\alpha(x)=q^{-\alpha|x|},\qquad x\in X.\]
		Indeed, $\mu_\alpha$ is radial, positive and decreasing.
		Furthermore, we write $B_\alpha$ in place of $B_{\mu_\alpha}$, namely
		\begin{equation}\label{misuratotalealpha}
			B_\alpha=1+\frac{q+1}{q}\sum_{n=1}^{+\infty}q^{(1-\alpha)n}\\
			=1+\frac{q+1}{q}\frac{q^{1-\alpha}}{1-q^{1-\alpha}}=\frac{q^\alpha+1}{q^\alpha-q}<+\infty.
		\end{equation}
	\end{ex}

	\begin{prop}\label{nondoubling}
		For every reference measure $\sigma$ the measure metric space $(X,d,\sigma)$ is nondoubling.
	\end{prop}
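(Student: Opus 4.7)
The plan is to show that the doubling inequality $\sigma(B(x,2r))\leq C\,\sigma(B(x,r))$ must fail for every constant $C>0$, by producing a sequence of balls whose doubling ratios blow up. The intuition is that a finite radial decreasing density on a tree of exponential growth must decay fast, so a ball of radius $r$ centered at a vertex far from $o$ is $\sigma$-negligible, while merely doubling the radius is enough to swallow the origin and hence capture a fixed amount of mass.

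Concretely, for each integer $r\geq 1$, I would pick an arbitrary vertex $x_r\in X$ with $|x_r|=2r$; such a vertex exists because $X$ is infinite. The first step is to bound $\sigma(B(x_r,r))$ from above: by the triangle inequality, every $u\in B(x_r,r)$ satisfies $|u|\geq |x_r|-d(x_r,u)\geq r$, hence
\[
B(x_r,r)\subseteq \{u\in X:\,|u|\geq r\},
\]
and therefore $\sigma(B(x_r,r))\leq B_\sigma-\sigma(B(o,r-1))$. Since $\sigma$ is finite and $B(o,r-1)\uparrow X$, this upper bound tends to $0$ as $r\to\infty$.

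The second step is to bound $\sigma(B(x_r,2r))$ from below: because $d(o,x_r)=2r$, the origin lies in $B(x_r,2r)$, so
\[
\sigma(B(x_r,2r))\geq \sigma(\{o\})=\sigma_0>0,
\]
where strict positivity uses the definition of reference measure.

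Combining the two estimates,
\[
\frac{\sigma(B(x_r,2r))}{\sigma(B(x_r,r))}\geq \frac{\sigma_0}{B_\sigma-\sigma(B(o,r-1))}\longrightarrow +\infty\qquad (r\to\infty),
\]
which contradicts any purported doubling bound. There is no real obstacle here beyond these two observations; the argument is uniform in the reference measure $\sigma$ and uses only its finiteness, the strict positivity of $\sigma_0$, and the fact that $|x_r|=2r$ forces $B(x_r,r)$ to sit in the tail $\{|u|\geq r\}$.
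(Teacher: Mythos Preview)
Your proof is correct and follows essentially the same approach as the paper: choose a vertex at distance $2r$ from $o$, note that the ball of radius $r$ is contained in the tail $\{|u|\geq r\}$ so its $\sigma$-mass tends to $0$, while the ball of radius $2r$ contains $o$ and hence has mass at least $\sigma_0$. The only cosmetic difference is that the paper bounds the small ball by $\sigma_n\cdot\#B(v_n,n)\lesssim q^n\sigma_n$ (using radiality and the ball's cardinality, then the convergence of $\sum q^n\sigma_n$), whereas you bound it directly by the tail mass $B_\sigma-\sigma(B(o,r-1))$; your route is slightly more direct but the idea is the same.
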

	\begin{proof}
		Let $\sigma$ be a reference measure. For every $n\in\N$, let $v_n\in X$ be such that $|v_n|=2n$. Then $\max\{\sigma(x)\colon x\in\ball{v_n}{n}\}=\sigma_n$ and so
		\[\sigma(\ball{v_n}{n})=\sum_{x\in\ball{v_n}{n}}\sigma(x)\leq \sigma_n|\ball{v_n}{n}|\lesssim q^{n}\sigma_n.\]
		On the other side, since $o\in\ball{v_n}{2n}$, we have
		\[\frac{\sigma(\ball{v_n}{2n})}{\sigma(\ball{v_n}{n})}\gtrsim \frac{\sigma(o)}{q^n\sigma_n}\xrightarrow{n\to\infty}\infty, \]
		by the finiteness of $\sigma$. This concludes the proof.
	\end{proof}

	Given a reference measure $\sigma$, we introduce the decreasing sequence $(b_{n})_{n\in\N}$ which collects some important information on $\sigma$. For every $n\in\N$, we define
	\begin{equation}\label{bt}\begin{split}
			b_n=b_n(\sigma)=\sum_{m=n+1}^{+\infty}\left[\sigma_{ m}a_{m-n-1}\left(\sum_{k=0}^{m-n-1}q^{k}\right)\right].
		\end{split}
	\end{equation}
	The sums are finite because $\sigma$ is a finite measure on $X$.
	We shall use the notation $b_n$ instead of $b_n(\sigma)$ whenever the measure is clear from the context.
	
	The next lemma is a technical result that is very useful in what follows. 
	\begin{lem}\label{lemma}
		Let $n\in\N$ and $g$ be a function on $X$ which is harmonic and vanishes on $\ball{o}{n}$. Then there exists a constant $b_{n}'>0$ such that for every $f\in\mathcal{A}^2(\sigma)$ 
		\[\langle f,g^H_{n} \rangle_{\mathcal{A}^2(\sigma)}=
		\sum_{|y|=n+1}\left(b_n f(y)-b'_n f(p(y))\right)\overline{g(y)}, 
		\]
		where $b_n$ is defined in~\eqref{bt} and $\langle\cdot,\cdot\rangle_{\cA^2(\sigma)}$ in~\eqref{scalprod}.
		%
	\end{lem}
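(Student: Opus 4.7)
The plan is to substitute the explicit formula \eqref{gextension} for $g^H_n$ into the definition \eqref{scalprod} of the inner product, exploit the hypothesis that $g$ vanishes on $\ball{o}{n}$ to simplify $g^H_n$ drastically, and finally use Proposition~\ref{prop1} (harmonicity of $f$ on each sector) to evaluate the remaining spherical sums.

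First I would observe that since $g\equiv 0$ on $\ball{o}{n}$, formula \eqref{gextension} gives $g^H_n(x)=0$ for $|x|\leq n$, and for $|x|>n$ the second term $(a_{|x|-n-1}-1)\,g(p^{|x|-n}(x))$ vanishes because $p^{|x|-n}(x)\in \sph{o}{n}$. Hence
\[g^H_n(x)=a_{|x|-n-1}\,g\bigl(p^{|x|-n-1}(x)\bigr),\qquad |x|>n,\]
whose argument lies in $\sph{o}{n+1}$. Since $\sph{o}{n+1}$ is finite, $g$ is bounded there by some $M$, and $a_{|x|-n-1}\leq q/(q-1)$; this forces $g^H_n\in L^\infty(X)\subset \cA^2(\sigma)$ (because $\sigma$ is finite) and guarantees the absolute convergence needed to rearrange sums freely.

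Next I would regroup the sum over $x\in X\setminus\ball{o}{n}$ by the unique ancestor $y=p^{|x|-n-1}(x)\in\sph{o}{n+1}$. Using $\sigma(x)=\sigma_m$ for $|x|=m$, this yields
\[\langle f,g^H_n\rangle_{\cA^2(\sigma)}=\sum_{|y|=n+1}\overline{g(y)}\sum_{m=n+1}^{+\infty}\sigma_m\,a_{m-n-1}\sum_{\substack{|x|=m\\ x\in\sect{y}}}f(x).\]
Because $f$ is harmonic on $X$, in particular on each sector $\sect{y}$ with $y\neq o$, Proposition~\ref{prop1} applied with $m\geq n+1=|y|$ gives
\[\sum_{\substack{|x|=m\\ x\in\sect{y}}}f(x)=\Bigl(\sum_{j=0}^{m-n-1}q^j\Bigr)f(y)-\Bigl(\sum_{j=0}^{m-n-2}q^j\Bigr)f(p(y)).\]

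Substituting and interchanging the order of summation, the coefficient of $f(y)\overline{g(y)}$ is exactly $b_n$ as defined in \eqref{bt}, while the coefficient of $-f(p(y))\overline{g(y)}$ defines
\[b'_n:=\sum_{m=n+2}^{+\infty}\sigma_m\,a_{m-n-1}\sum_{j=0}^{m-n-2}q^j,\]
a convergent series (bounded by $b_n$, itself finite because $\sigma$ is finite) whose terms are all strictly positive, so $b'_n>0$. This is the claimed identity. The main obstacle is purely clerical: three interlocking parameters---the running sphere radius $m$, the geometric-sum length $m-n-1$ appearing in $a$, and the inner index $j$ in $\sum q^j$---must be tracked carefully so that the final coefficient of $f(y)\overline{g(y)}$ is recognized as precisely the quantity $b_n$ from \eqref{bt} rather than a shifted variant.
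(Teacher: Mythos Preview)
Your proof is correct and follows essentially the same route as the paper: simplify $g^H_n$ using $g|_{\ball{o}{n}}=0$, regroup the inner product by the ancestor $y\in\sph{o}{n+1}$, and apply Proposition~\ref{prop1} to evaluate the spherical sums of $f$ on each $\sect{y}$. Your version is slightly more explicit, supplying the convergence justification and the closed form for $b'_n$ that the paper leaves to the reader.
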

	\begin{oss}
		The constant $b_n'$ has a structure similar to that of $b_n$, as can be seen in the proof below, but we are not interested in it. 
	\end{oss}
	\begin{proof} 
		Observe that, from the fact that $g|_{\ball{o}{n}}=0$ and~\eqref{gextension}, for every $x\in X$ with $|x|>n$ we have $g^H_n(x)=a_{|x|-n-1}g(p^{|x|-n-1}(x))$.
		Take $f\in \mathcal{A}^2(\sigma)$. Then, by applying Proposition~\ref{prop1} to $f$, we have
		\begin{align*}
			\langle f,&g^H_{n} \rangle_{\mathcal{A}^2(\sigma)}=\sum_{m=n+1}^{+\infty}\sigma_m\sum_{|x|=m}f(x)\overline{g^H_{n}(x)}\\
			&=\sum_{m=n+1}^{+\infty}\sigma_m\sum_{|y|=n+1}\sum_{\substack{|x|=m\\x\in\sect{y}}}f(x)a_{|x|-n-1}\overline{g(p^{|x|-n-1}(x))}\\
			&=\sum_{|y|=n+1}\overline{g(y)}\sum_{m=n+1}^{+\infty}\sigma_ma_{m-n-1}\sum_{\substack{|x|=m\\x\in\sect{y}}}f(x)\\
			&=\sum_{|y|=n+1}\overline{g(y)}\sum_{m=n+1}^{+\infty}\sigma_ma_{m-n-1}\left[\left(\sum_{k=0}^{m-n-1}q^{k}\right)f(y)-\left(\sum_{k=0}^{m-n-2}q^{k}\right)f(p(y))\right]\\
			&=\sum_{|y|=n+1}\left(b_n f(y)-b'_n f(p(y))\right)\overline{g(y)},
		\end{align*}
		as required.
	\end{proof}

	\subsection{A canonical orthonormal basis of $\mathcal{A}^2(\sigma)$}\label{sub:basis}
	The goal of this section is the construction of an orthonormal basis of the space $\mathcal{A}^2(\sigma)$.

	Let us consider the linear spaces 
	\begin{equation*}
		W_v:=\big\{\varphi\colon s(v)\to\mathbb{C}\colon \sum_{z\in s(v)}\varphi(z)=0 \big\}\simeq\C^{|s(v)|-1}=
		\begin{cases}
			\mathbb{C}^{q}, &v=o,\\
			\mathbb{C}^{q-1},& v\in X\setminus\{o\}.
		\end{cases} 
	\end{equation*}
	For every $v\in X$ we set
	$I_v=\{1,\dots,|s(v)|-1\}.$
	For every $v\in X$ we fix an orthonormal basis $\{e_{v,j}\}_{j\in I_v}$ of $W_v$ with respect to to the scalar product 
	\[\langle \varphi,\psi\rangle_{W_v}=\sum_{y\in s(v)}\varphi(y)\overline{\psi(y)}.\]

	Let $v\in X$ and $j\in I_v$. We consider the extension by zero to all of $X$ of $e_{v,j}$, namely,
	\[E_{v,j}(x)=\begin{cases}
		e_{v,j}(x),&x\in s(v);\\
		0,&x\not\in s(v).
	\end{cases} \]
	It is immediate to see that $E_{v,j}$ is harmonic on $B(o,|v|)$ and vanishes on $B(o,|v|)$. 
	We denote the harmonic extension of $E_{v,j}$ by $ f_{v,j}=(E_{v,j})^H_{|v|}$, namely
	\begin{equation}\label{fvj}
		f_{v,j}(x)=\begin{cases}
			0,&\text{if }x\not\in\sect{v}\setminus\{v\},\\
			a_{|x|-|v|-1}E_{v,j}(p^{|x|-|v|-1}(x)),&\text{otherwise}.
		\end{cases}
	\end{equation}
	Hence $ f_{v,j}$ is harmonic for every $v\in X$ and $j\in I_v$. Furthermore $ f_{v,j}$ is bounded, since for every $x\in X$ 
	\[| f_{v,j}(x)|\leq (1-q^{-1})^{-1}\|e_{v,j}\|_{\infty}.\] 
	Hence $ f_{v,j}\in\mathcal{A}^2(\sigma)$ for every reference measure $\sigma$. 
	Notice that, upon setting $f_0(x)\equiv 1$, the family
	\begin{equation}\label{tildeF}
		\cB=\{ f_0\}\cup\{ f_{v,j}\colon v\in X,\, j \in I_v\}\subseteq\mathcal{A}^2(\sigma)
	\end{equation}
	is independent of the choice of the reference measure $\sigma$. 
	
	\begin{prop}
		The family 
		$ \cB$ is a complete orthogonal system in $\mathcal{A}^2(\sigma)$ for every reference measure $\sigma$.
	\end{prop}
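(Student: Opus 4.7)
The plan is to verify separately (i)~that any two distinct elements of $\cB$ are orthogonal (with the nonvanishing norms $\|f_0\|^2=B_\sigma$ and $\|f_{v,j}\|^2=b_{|v|}$ coming out as a by-product), and (ii)~that the only $f\in\cA^2(\sigma)$ orthogonal to every element of $\cB$ is $f=0$.

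For (i), since $E_{v,j}$ is harmonic and vanishes on $\ball{o}{|v|}$, Lemma~\ref{lemma} applied with $g=E_{v,j}$ and $n=|v|$ yields, for every $f\in\cA^2(\sigma)$,
\[
\langle f,f_{v,j}\rangle_{\cA^2(\sigma)}
=\sum_{y\in s(v)}\bigl(b_{|v|}f(y)-b'_{|v|}f(v)\bigr)\overline{e_{v,j}(y)}.
\]
Taking $f=f_0\equiv 1$, the bracket collapses to the constant $b_{|v|}-b'_{|v|}$ and the sum vanishes because $\sum_{y\in s(v)}\overline{e_{v,j}(y)}=0$ by the defining property of $W_v$. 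Taking $f=f_{v',j'}$ I run a short case analysis on the relative position of $v,v'$, WLOG $|v|\geq|v'|$. If $v=v'$, formula~\eqref{fvj} gives $f_{v,j'}(y)=e_{v,j'}(y)$ for $y\in s(v)$ and $f_{v,j'}(v)=0$, so the identity reduces to $b_{|v|}\langle e_{v,j'},e_{v,j}\rangle_{W_v}=b_{|v|}\delta_{j,j'}$; this simultaneously yields orthogonality for $j\neq j'$ and $\|f_{v,j}\|^2=b_{|v|}>0$. If $v\neq v'$ and neither is a descendant of the other (in particular whenever $|v|=|v'|$), then $\sect{v}\cap\sect{v'}=\emptyset$, so $f_{v',j'}$ vanishes on $s(v)\cup\{v\}\subseteq\sect{v}$ and the inner product is trivially zero. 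The remaining case $v\in\sect{v'}\setminus\{v'\}$ is the informative one: letting $w'\in s(v')$ be the ancestor of $v$ at level $|v'|+1$, \eqref{fvj} shows that $f_{v',j'}(y)=a_{|v|-|v'|}\,e_{v',j'}(w')$ is \emph{independent of $y\in s(v)$}, and likewise $f_{v',j'}(v)=a_{|v|-|v'|-1}\,e_{v',j'}(w')$; hence the bracket $b_{|v|}f_{v',j'}(y)-b'_{|v|}f_{v',j'}(v)$ is constant in $y$ and is annihilated by $\sum_{y\in s(v)}\overline{e_{v,j}(y)}=0$.

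For (ii), suppose $f\in\cA^2(\sigma)$ is orthogonal to every element of $\cB$. The displayed identity combined with $\langle f,f_{v,j}\rangle=0$ for every $j\in I_v$ becomes $\langle f|_{s(v)},e_{v,j}\rangle_{\ell^2(s(v))}=0$; since $\{e_{v,j}\}_{j\in I_v}$ is an orthonormal basis of $W_v$ and $W_v$ is precisely the orthogonal complement of the constants in $\ell^2(s(v))$, I deduce that $f$ is constant on $s(v)$ for every $v\in X$. I then upgrade this to $f\equiv f(o)$ on every sphere $\sph{o}{n}$ by induction on $n$: for $n=1$, constancy of $f$ on $s(o)$ plus harmonicity at $o$ force the common value to equal $f(o)$; for the inductive step, if $f\equiv f(o)$ on $\sph{o}{n-2}\cup\sph{o}{n-1}$, then for each $v\in\sph{o}{n-1}$ harmonicity at $v$ combined with the known constancy of $f$ on $s(v)$ gives $(q+1)f(o)=f(o)+q\alpha_v$, hence $\alpha_v=f(o)$. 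Summing over $v\in\sph{o}{n-1}$ yields $f\equiv f(o)$ on $\sph{o}{n}$. Therefore $f\equiv f(o)$ on $X$, and $\langle f,f_0\rangle_{\cA^2(\sigma)}=f(o)B_\sigma=0$ together with $B_\sigma>0$ forces $f\equiv 0$.

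The only genuinely delicate point is the subcase $v\in\sect{v'}\setminus\{v'\}$ in~(i): the recognition that $f_{v',j'}$ is automatically constant on $s(v)$ (because all elements of $s(v)$ share the same ancestor $w'\in s(v')$) is what lets the identity from Lemma~\ref{lemma} reduce, a second time, to the vanishing sum $\sum_{y\in s(v)}\overline{e_{v,j}(y)}=0$. Everything else is combinatorial bookkeeping once the lemma is at hand.
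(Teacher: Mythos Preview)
Your proof is correct and rests on the same tool as the paper's, Lemma~\ref{lemma}, but you organize the two halves a bit differently. For the nested orthogonality case $v\in\sect{v'}\setminus\{v'\}$, which you flag as the delicate point, you apply the lemma at the \emph{deeper} level $n=|v|$ and then argue that $f_{v',j'}$ is constant on $s(v)$ (via the common ancestor $w'\in s(v')$); the paper instead applies the lemma at the \emph{shallower} level $n=|v'|$ and simply notes that $f_{v,j}$ vanishes identically on $\ball{o}{|v'|+1}$, which kills the sum in one line without tracking $w'$. For completeness you reverse the paper's order: you use only the $f_{v,j}$'s to deduce that $f$ is constant on every $s(v)$, propagate by harmonicity to $f\equiv f(o)$, and invoke $\langle f,f_0\rangle=0$ only at the end; the paper starts from $\langle g,f_0\rangle=0$ to obtain $g(o)=0$ as the base case and then peels off spheres inductively with the $f_{v,j}$'s. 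Both arrangements are valid; the paper's is marginally shorter in the nested case, while yours isolates more cleanly the purely local consequence (constancy on $s(v)$) of orthogonality to the $f_{v,j}$'s. One trivial wording quibble: ``summing over $v\in\sph{o}{n-1}$'' should read ``ranging over''---no sum is taken; the sets $s(v)$ simply partition $\sph{o}{n}$.
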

	\begin{proof}
		Fix a reference measure $\sigma$. The fact that $ f_0$ is orthogonal to every other function of the family follows from the harmonicity of $ f_{v,j}$ and~\eqref{harmoball}. Indeed  
		\[	\langle  f_{v,j}, f_0\rangle_{\mathcal{A}^2(\sigma)}=\sum_{x\in X} f_{v,j}(x)\sigma(x)=\sum_{n=0}^{+\infty}\sigma_{ n}\sum_{|x|=n} f_{v,j}(x)=0. \]
		Let us consider $v,\,w\in X$ with $v\neq w$. Without loss of generality we may consider two situations: either $\sect{v}\cap \sect{w}=\emptyset$ or $\sect{v}\subsetneq \sect{w}$. In the first case $ f_{v,j}\perp  f_{w,k}$ for every $j\in I_v$ and $k\in I_w$, because their supports are disjoint. If $\sect{v}\subsetneq \sect{w}$, then we can suppose that $|w|<|v|$. Since $ f_{v,j}|_{\ball{o}{|w|+1}}=0$, from Lemma~\ref{lemma} we have
		\begin{align*}
			\langle  f_{v,j}, f_{w,k}\rangle_{\mathcal{A}^2(\sigma)} 
			&=\sum_{|y|=|w|+1}(b_{|w|} f_{v,j}(y)-b'_{|w|} f_{v,j}(p(y)))\overline{E_{w,k}(y)}=0.	
		\end{align*}
		It remains to prove the orthogonality in the case $v=w$. Let $j,\,k\in I_v$ be such that $j\neq k$. We know that $ f_{v,k}|_{\ball{o}{|v|}}=0$, so that by Lemma~\ref{lemma}
		\begin{align*}
			\langle  f_{v,j}, f_{v,k}\rangle_{\mathcal{A}^2(\sigma)}
			&=b_{|v|}\sum_{|y|=|v|+1}E_{v,j}(y)\overline{E_{v,k}(y)}=b_{|v|}\sum_{y\in s(v)}e_{v,j}(y)\overline{e_{v,k}(y)}=0,
		\end{align*} 
		where we used the fact that $\supp(E_{v,k}),\supp(E_{v,j})\subseteq s(v)$ and the orthogonality of $e_{v,j}$ and $e_{v,k}$ in $W_v$.
		%
		
		We now show that $ \cB$ is complete.
		Take $g\in \mathcal{A}^2(\sigma)$ such that $\langle g,f\rangle_{\mathcal{A}^2(\sigma)}=0$ for every $f\in \cB$. We show that $g$ is the null function in $\mathcal{A}^2(\sigma)$. In particular we prove by induction that $g=0$ on every $\ball{o}{m}$, $m\in\N$.\\
		We start by observing that $\langle g,  f_0\rangle_{\mathcal{A}^2(\sigma)}=0$ implies $g(o)=0$. Indeed by \eqref{harmoball}
		\begin{equation}\label{nucleoino}
			0=\langle g,  f_0\rangle_{\mathcal{A}^2(\sigma)}=\sum_{n=0}^{+\infty} \sigma_{ n}\sum_{|x|=n}g(x)=\left(1+\frac{q+1}{q}\sum_{n=1}^{+\infty}q^{n}\sigma_n\right)g(o)=B_\sigma g(o).
		\end{equation}
		We assume now $g=0$ on $\ball{o}{m}$ for some $m\in\mathbb{N}$. 
		Let $v\in\sph{o}{m}$. Observe that since $g$ is harmonic and $g(v)=0$, we have $g|_{s(v)}\in W_v$. Hence for every $ j\in I_v$
		\begin{align}\label{b_v}
			0&=\langle g,  f_{v,j}\rangle_{\mathcal{A}^2(\sigma)}=b_m\sum_{y\in s(v) }\overline{e_{v,j}(y)}g(y)
		\end{align}
		and this implies that $g(y)=0$ for every $y\in s(v)$ and so for every $y\in \sph{o}{m+1}$, namely $g$ vanishes on $\ball{o}{m+1}$. The fact that $g\equiv 0$ follows by induction.
	\end{proof}
	
	We now fix a measure $\sigma$ and compute the norm of functions of the family $ \cB$ in $\mathcal{A}^2(\sigma)$. Evidently, $\| f_0\|_{\cA^2(\sigma)}^2=B_\sigma$.
	Let $v\in X$ and $j\in I_v$. 
	By \eqref{b_v}, we have 
	
	\begin{equation}\label{squarenorm}
		\| f_{v,j}\|_{\mathcal{A}^2(\sigma)}^2=\langle  f_{v,j}, f_{v,j} \rangle_{\mathcal{A}^2(\sigma)}=b_{|v|}\sum_{y\in s(v)}\overline{e_{v,j}(y)}e_{v,j}(y)=b_{|v|}.
	\end{equation}
	Hence the norm of $ f_{v,j}$ does not depend on $j$ and coincides with the constant in~\eqref{bt}.
	Hence 
	\begin{equation}\label{basis}
		\cB_\sigma=\{B_\sigma^{-\frac12}f_0\}\cup\{b_{|v|}^{-\frac12}f_{v,j}\colon v\in X,\, j\in I_v\}
	\end{equation}
	is an orthonormal basis of $\mathcal{A}^2(\sigma)$.

	\section{The reproducing kernel of $\mathcal{A}^2(\sigma)$}\label{sec:kernel}
	In this section we fix a reference measure $\sigma$. We show that the Bergman space $\mathcal{A}^2(\sigma)$ is a RKHS and we first obtain a recursive formula for the kernel and we then derive a formula in closed form. 
	Observe that the main ingredient used in the proofs are the harmonic extension and the orthonormal basis defined in the previous section together with the fact that $W_v$ are reproducing kernel Hilbert spaces, too. 
	
	Let $z\in X$. We consider the evaluation functional $\Phi_z\colon\mathcal{A}^2(\sigma)\to\mathbb{C}$ defined by $\Phi_zg=g(z)$.
	Observe that $\Phi_z$ is a bounded operator. Indeed by the Cauchy-Schwarz inequality
	\[|\Phi_zg|^2=|g(z)|^2\leq\frac{1}{\sigma(z)}\sum_{x\in X}|g(x)|^2\sigma(x)=\frac{\|g\|_2^2}{\sigma(z)}.\]
	Thus $\mathcal{A}^2(\sigma)$ is a RKHS, that is for every $z\in X$ there exists $K_z\in\mathcal{A}^2(\sigma)$ such that 
	\[\langle g,K_z\rangle_{\mathcal{A}^2(\sigma)}=g(z), \qquad g\in\mathcal{A}^2(\sigma).\]
	Let $K\colon X\times X\to\C$ be the kernel defined by $K(z,x):=K_z(x)$.
	
	Since $\cB_\sigma$ defined in~\eqref{basis} is an orthonormal basis of $\mathcal{A}_\sigma^2$, for every $z\in X$ we can write 
	\begin{equation}\label{decomp}
		K_z=\sum_{f\in\cB_\sigma}\langle K_z,f\rangle_{\cA^2(\sigma)}f=\sum_{f\in\cB_\sigma}\overline{f(z)}f
		=\frac{1}{B_\sigma}+\sum_{v\in X}\sum_{j\in I_v}\frac{ \overline{f_{v,j}(z) }f_{v,j}}{b_{|v|}}.
	\end{equation}
	We recall that by \eqref{fvj}, for every $z\in X$ 
	\[\{v\in X\colon f_{v,j}(z)\neq 0 \text{ for some }j\in I_v\}=\begin{cases*}\emptyset,&\text{if }$v=o$;\\
		[o,p(z)],&\text{if }$v\neq o$.
	\end{cases*}\]
	Hence for every $z\in X$ the sum in \eqref{decomp} is finite and the decomposition of $K_z$ holds true pointwise.
	
	Our goal is to compute $K_z$. 
	To this end, we introduce the auxiliary function $\Gamma\colon X\times X\times X\to\R$ which is a parametrization of the family of reproducing kernels for the spaces $\{W_v\}_{v\in X}$.
	For every $(v,z,x)\in X\times X\times X$ we set 
	\begin{equation*}
		\Gamma(v,z,x)=\begin{dcases}
			0,& \text{if }\{z,x\}\nsubseteq\sect{v}\setminus\{v\};\\
			\frac{\# s(v)-1}{\# s(v)},& \text{if }\{z,x\}\subseteq\sect{y}\text{ for some }y\in s(v);\\
			-\frac{1}{\# s(v)},& \text{otherwise}.
		\end{dcases}
	\end{equation*}
	Observe that $\Gamma$ is symmetric in the second and third variables. Furthermore, $\Gamma(v,z,\cdot)$ is the null function if $z\not\in\sect{v}\setminus\{v\}$ and whenever $z\in\sect{v}\setminus\{v\}$ we have $\supp(\Gamma(v,z,\cdot))=\sect{v}\setminus\{v\}$.  Moreover, the values of $\Gamma(v,z,\cdot)$ on $\sect{v}\setminus\{v\}$ are completely determined by the values on $s(v)$, as the value of $\Gamma(v,z,\cdot)$ at $x\in\sect{v}\setminus\{v\}$ is equal to the value at $p^{|x|-|v|-1}(x)\in s(v)$.
	\begin{center}
		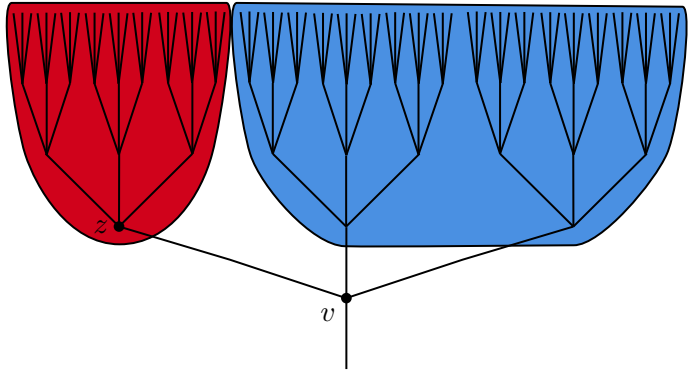
\begin{figure}[h]
			\centering	
			\tikzset{every picture/.style={line width=0.75pt}} 
			
			\begin{tikzpicture}[x=0.75pt,y=0.75pt,scale=1.2,yscale=-1,xscale=1]
				
				\draw  [draw opacity=0][fill={rgb, 255:red, 74; green, 144; blue, 226 }  ,fill opacity=0.2 ] (275,67) .. controls (280.6,67) and (455.4,68.6) .. (460.2,68.6) .. controls (465,68.6) and (459,118.2) .. (453.8,130.2) .. controls (448.6,142.2) and (426.6,168.6) .. (415,168.6) .. controls (403.4,168.6) and (333.4,169.4) .. (319.8,169) .. controls (306.2,168.6) and (285,144.2) .. (280.6,130.6) .. controls (276.2,117) and (269.4,67) .. (275,67) -- cycle ;
				\draw  [draw opacity=0][fill={rgb, 255:red, 208; green, 2; blue, 27 }  ,fill opacity=0.19 ] (181.4,67) .. controls (187,67) and (265.8,67) .. (270.6,67) .. controls (275.4,67) and (269,114.2) .. (264.6,130.6) .. controls (260.2,147) and (247.4,168.2) .. (226.2,168.2) .. controls (205,168.2) and (191.4,144.2) .. (187,130.6) .. controls (182.6,117) and (175.8,67) .. (181.4,67) -- cycle ;
				\draw    (320.5,190.75) -- (320.5,220.5) ;
				\draw    (414.9,160.75) -- (368.8,174.65) -- (320.5,190.75) ;
				\draw    (320.5,161) -- (320.5,190.75) ;
				\draw    (414.83,129) -- (414.9,160.75) ;
				\draw    (445.4,130.25) -- (414.9,160.75) ;
				\draw    (384.4,130.25) -- (414.9,160.75) ;
				\draw    (384.4,100.5) -- (384.4,130.25) ;
				\draw    (374.7,100.35) -- (384.4,130.25) ;
				\draw    (394.6,100.3) -- (384.4,130.25) ;
				\draw    (374.7,70.6) -- (374.7,100.35) ;
				\draw    (371,70.85) -- (374.7,100.35) ;
				\draw    (378.4,70.85) -- (374.7,100.35) ;
				\draw    (384.5,70.8) -- (384.5,100.55) ;
				\draw    (380.8,71.05) -- (384.5,100.55) ;
				\draw    (388.2,71.05) -- (384.5,100.55) ;
				\draw    (394.5,70.8) -- (394.5,100.55) ;
				\draw    (390.8,71.05) -- (394.5,100.55) ;
				\draw    (398.2,71.05) -- (394.5,100.55) ;
				\draw    (414.97,100.79) -- (414.97,130.54) ;
				\draw    (405.27,100.64) -- (414.97,130.54) ;
				\draw    (425.17,100.59) -- (414.97,130.54) ;
				\draw    (405.27,70.89) -- (405.27,100.64) ;
				\draw    (401.57,71.14) -- (405.27,100.64) ;
				\draw    (408.97,71.14) -- (405.27,100.64) ;
				\draw    (415.07,71.09) -- (415.07,100.84) ;
				\draw    (411.37,71.34) -- (415.07,100.84) ;
				\draw    (418.77,71.34) -- (415.07,100.84) ;
				\draw    (425.07,71.09) -- (425.07,100.84) ;
				\draw    (421.37,71.34) -- (425.07,100.84) ;
				\draw    (428.77,71.34) -- (425.07,100.84) ;
				\draw    (444.97,100.79) -- (444.97,130.54) ;
				\draw    (435.27,100.64) -- (444.97,130.54) ;
				\draw    (455.17,100.59) -- (444.97,130.54) ;
				\draw    (435.27,70.89) -- (435.27,100.64) ;
				\draw    (431.57,71.14) -- (435.27,100.64) ;
				\draw    (438.97,71.14) -- (435.27,100.64) ;
				\draw    (445.07,71.09) -- (445.07,100.84) ;
				\draw    (441.37,71.34) -- (445.07,100.84) ;
				\draw    (448.77,71.34) -- (445.07,100.84) ;
				\draw    (455.07,71.09) -- (455.07,100.84) ;
				\draw    (451.37,71.34) -- (455.07,100.84) ;
				\draw    (458.77,71.34) -- (455.07,100.84) ;
				\draw    (320.41,131) -- (320.41,160.75) ;
				\draw    (350.91,130.25) -- (320.41,160.75) ;
				\draw    (289.91,130.25) -- (320.41,160.75) ;
				\draw    (289.91,100.5) -- (289.91,130.25) ;
				\draw    (280.21,100.35) -- (289.91,130.25) ;
				\draw    (300.11,100.3) -- (289.91,130.25) ;
				\draw    (280.21,70.6) -- (280.21,100.35) ;
				\draw    (276.51,70.85) -- (280.21,100.35) ;
				\draw    (283.91,70.85) -- (280.21,100.35) ;
				\draw    (290.01,70.8) -- (290.01,100.55) ;
				\draw    (286.31,71.05) -- (290.01,100.55) ;
				\draw    (293.71,71.05) -- (290.01,100.55) ;
				\draw    (300.01,70.8) -- (300.01,100.55) ;
				\draw    (296.31,71.05) -- (300.01,100.55) ;
				\draw    (303.71,71.05) -- (300.01,100.55) ;
				\draw    (320.49,101.07) -- (320.49,130.82) ;
				\draw    (310.79,100.92) -- (320.49,130.82) ;
				\draw    (330.69,100.87) -- (320.49,130.82) ;
				\draw    (310.79,71.17) -- (310.79,100.92) ;
				\draw    (307.09,71.42) -- (310.79,100.92) ;
				\draw    (314.49,71.42) -- (310.79,100.92) ;
				\draw    (320.59,71.37) -- (320.59,101.12) ;
				\draw    (316.89,71.62) -- (320.59,101.12) ;
				\draw    (324.29,71.62) -- (320.59,101.12) ;
				\draw    (330.59,71.37) -- (330.59,101.12) ;
				\draw    (326.89,71.62) -- (330.59,101.12) ;
				\draw    (334.29,71.62) -- (330.59,101.12) ;
				\draw    (350.49,100.79) -- (350.49,130.54) ;
				\draw    (340.79,100.64) -- (350.49,130.54) ;
				\draw    (360.69,100.59) -- (350.49,130.54) ;
				\draw    (340.79,70.89) -- (340.79,100.64) ;
				\draw    (337.09,71.14) -- (340.79,100.64) ;
				\draw    (344.49,71.14) -- (340.79,100.64) ;
				\draw    (350.59,71.09) -- (350.59,100.84) ;
				\draw    (346.89,71.34) -- (350.59,100.84) ;
				\draw    (354.29,71.34) -- (350.59,100.84) ;
				\draw    (360.59,71.09) -- (360.59,100.84) ;
				\draw    (356.89,71.34) -- (360.59,100.84) ;
				\draw    (364.29,71.34) -- (360.59,100.84) ;
				\draw    (225.97,160.75) -- (272.07,174.65) -- (320.37,190.75) ;
				\draw    (226.04,129) -- (225.97,160.75) ;
				\draw    (195.47,130.25) -- (225.97,160.75) ;
				\draw    (256.47,130.25) -- (225.97,160.75) ;
				\draw    (256.47,100.5) -- (256.47,130.25) ;
				\draw    (266.17,100.35) -- (256.47,130.25) ;
				\draw    (246.27,100.3) -- (256.47,130.25) ;
				\draw    (266.17,70.6) -- (266.17,100.35) ;
				\draw    (269.87,70.85) -- (266.17,100.35) ;
				\draw    (262.47,70.85) -- (266.17,100.35) ;
				\draw    (256.37,70.8) -- (256.37,100.55) ;
				\draw    (260.07,71.05) -- (256.37,100.55) ;
				\draw    (252.67,71.05) -- (256.37,100.55) ;
				\draw    (246.37,70.8) -- (246.37,100.55) ;
				\draw    (250.07,71.05) -- (246.37,100.55) ;
				\draw    (242.67,71.05) -- (246.37,100.55) ;
				\draw    (225.9,100.79) -- (225.9,130.54) ;
				\draw    (235.6,100.64) -- (225.9,130.54) ;
				\draw    (215.7,100.59) -- (225.9,130.54) ;
				\draw    (235.6,70.89) -- (235.6,100.64) ;
				\draw    (239.3,71.14) -- (235.6,100.64) ;
				\draw    (231.9,71.14) -- (235.6,100.64) ;
				\draw    (225.8,71.09) -- (225.8,100.84) ;
				\draw    (229.5,71.34) -- (225.8,100.84) ;
				\draw    (222.1,71.34) -- (225.8,100.84) ;
				\draw    (215.8,71.09) -- (215.8,100.84) ;
				\draw    (219.5,71.34) -- (215.8,100.84) ;
				\draw    (212.1,71.34) -- (215.8,100.84) ;
				\draw    (195.9,100.79) -- (195.9,130.54) ;
				\draw    (205.6,100.64) -- (195.9,130.54) ;
				\draw    (185.7,100.59) -- (195.9,130.54) ;
				\draw    (205.6,70.89) -- (205.6,100.64) ;
				\draw    (209.3,71.14) -- (205.6,100.64) ;
				\draw    (201.9,71.14) -- (205.6,100.64) ;
				\draw    (195.8,71.09) -- (195.8,100.84) ;
				\draw    (199.5,71.34) -- (195.8,100.84) ;
				\draw    (192.1,71.34) -- (195.8,100.84) ;
				\draw    (185.8,71.09) -- (185.8,100.84) ;
				\draw    (189.5,71.34) -- (185.8,100.84) ;
				\draw    (182.1,71.34) -- (185.8,100.84) ;
				
				\node[anchor=east] at (225.97,160.75) {$z$};
				\node[circle,fill=black,inner sep=0pt,minimum size=4pt,label=below:{}] (a) at (225.97,160.75) {};
				\node[anchor=north east] at (320.5,190.75) {$v$};
				\node[circle,fill=black,inner sep=0pt,minimum size=4pt,label=below:{}] (a) at (320.5,190.75) {};
			\end{tikzpicture}
			
			\caption{Partial representation of the function $\Gamma(v,z,\cdot)$ on $\sect{v}$. The value of $\Gamma(v,z,\cdot)$ at the vertices in the red area is $\frac{\# s(v)-1}{\# s(v)}$, while in the blue area is $-\frac{1}{\# s(v)}$. Clearly, $\Gamma(v,z,v)=0$.}
		\end{figure}
	\end{center}
	
	We now show that $\Gamma(v,\cdot,\cdot)$ is the reproducing kernel of $W_v$, namely that for $z\in s(v)$ we have
	\[\varphi(z)=\langle \varphi,\Gamma(v,z,\cdot)\rangle_{W_v},\qquad\varphi\in W_v.\]
	First of all $\Gamma(v,z,\cdot)\in W_v$ because 
	\[\sum_{y\in s(v)}\Gamma(v,z,y)=-(\# s(v)-1)\frac{1}{\# s(v)}+\frac{\# s(v)-1}{\# s(v)}=0.\]
	Furthermore,
	\begin{align*}
		\langle \varphi,\Gamma(v,z,\cdot)\rangle_{W_v}&=\frac{\# s(v)-1}{\# s(v)}\varphi(z)-\frac{1}{\# s(v)}\sum_{\substack{y\in s(v)\\y\neq z}}\varphi(y)\\&= \frac{\# s(v)-1}{\# s(v)}\varphi(z)+\frac{1}{\# s(v)}\varphi(z)=\varphi(z),  
	\end{align*}
	because $\varphi\in W_v$.
	
	It is easy to see that $\Gamma(v,z,\cdot)$ is harmonic on $\ball{o}{|v|}$ so that we can consider the harmonic extension $(\Gamma(v,z,\cdot))^H_{|v|}$, which is bounded by construction. Indeed from the definition of harmonic extension we have for every $x\in \sect{v}\setminus\{v\}$
	\begin{equation}\label{eq:Gammaharm}
		(\Gamma(v,z,\cdot))^H_{|v|}(x)=\left(\sum_{j=0}^{|x|-|v|-1}q^{-j}\right)\Gamma(v,z,p^{|x|-|v|-1}(x))=a_{|x|-|v|-1}\Gamma(v,z,x),
	\end{equation}
	and it vanishes elsewhere. We recall that if $z\not\in\sect{v}$, then $\Gamma(v,z,\cdot)=(\Gamma(v,z,\cdot))^H_{|v|}$ is the null function.
	
	\begin{prop}\label{ricorsiva}
		Let $z\in X$ and $[o,z]=\{v_t\}_{t=0}^{|z|}$. The kernel $K_z$ is
		\begin{equation*}
			K_z=\begin{cases}
				\displaystyle \frac{1}{B_\sigma},&\text{if }z=o,\\
				\displaystyle K_o+\frac{1}{b_0}(\Gamma(o,z,\cdot))^H_{0},&\text{if }|z|=1,\\
				\displaystyle -\frac{1}{q}K_{v_{m-2}}+\frac{q+1}{q}K_{v_{m-1}}+\frac{1}{b_{m-1}}(\Gamma(v_{m-1},z,\cdot))^H_{m-1},&\text{if }|z|=m>1.
			\end{cases}
		\end{equation*}
	\end{prop}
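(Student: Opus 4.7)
The plan is to use the explicit basis expansion \eqref{decomp} of $K_z$ and to split it into two parts: the contribution coming from the deepest level $v_{m-1}$, which will be shown to reproduce the $\Gamma$-term, and the contribution from shallower levels $v_0,\dots,v_{m-2}$, which will be shown to match the $K_{v_{m-2}}$ and $K_{v_{m-1}}$ terms via a simple algebraic identity for the coefficients $a_n$.

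First, observe that by \eqref{fvj} we have $f_{v,j}(z)\neq 0$ only when $v\in\{v_0,v_1,\dots,v_{m-1}\}=[o,p(z)]$, so \eqref{decomp} reduces to the finite sum
\begin{equation*}
K_z \;=\; \frac{1}{B_\sigma} \;+\; \sum_{t=0}^{m-1}\sum_{j\in I_{v_t}}\frac{\overline{f_{v_t,j}(z)}}{b_{|v_t|}}\,f_{v_t,j},
\end{equation*}
and moreover $f_{v_t,j}(z)=a_{m-t-1}\,E_{v_t,j}(v_{t+1})$ by \eqref{fvj}. This immediately settles the two degenerate cases: when $z=o$ there are no surviving $v_t$'s and $K_o=1/B_\sigma$; when $|z|=1$ only $t=0$ appears and the term is exactly $(1/b_0)(\Gamma(o,z,\cdot))^H_0$ by the identification explained below.

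Next I would isolate the $t=m-1$ summand. Since $v_{m-1}=p(z)$ and $z\in s(v_{m-1})$, we have $f_{v_{m-1},j}(z)=e_{v_{m-1},j}(z)$. Because $\{e_{v_{m-1},j}\}_{j\in I_{v_{m-1}}}$ is an orthonormal basis of $W_{v_{m-1}}$ and $\Gamma(v_{m-1},z,\cdot)$ has just been shown to be its reproducing kernel, we get $\Gamma(v_{m-1},z,\cdot)=\sum_j \overline{e_{v_{m-1},j}(z)}\,e_{v_{m-1},j}$ on $s(v_{m-1})$. Applying the harmonic extension $(\cdot)^H_{m-1}$, which is linear and sends $E_{v_{m-1},j}$ to $f_{v_{m-1},j}$, gives
\begin{equation*}
(\Gamma(v_{m-1},z,\cdot))^H_{m-1} \;=\; \sum_{j\in I_{v_{m-1}}}\overline{f_{v_{m-1},j}(z)}\,f_{v_{m-1},j}.
\end{equation*}
Dividing by $b_{m-1}$ produces exactly the last summand in the expansion of $K_z$.

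Finally I treat the sum over $t=0,\dots,m-2$. Writing the analogous expansions for $K_{v_{m-1}}$ and $K_{v_{m-2}}$, the coefficient of $f_{v_t,j}/b_{|v_t|}$ in $-\tfrac{1}{q}K_{v_{m-2}}+\tfrac{q+1}{q}K_{v_{m-1}}$ equals
\begin{equation*}
\overline{E_{v_t,j}(v_{t+1})}\Big(\tfrac{q+1}{q}\,a_{m-t-2}-\tfrac{1}{q}\,a_{m-t-3}\Big),
\end{equation*}
with the convention $a_{-1}=0$. Setting $n=m-t-1$, the target is $\overline{E_{v_t,j}(v_{t+1})}\,a_n$, so the whole proof reduces to the identity $\tfrac{q+1}{q}a_{n-1}-\tfrac{1}{q}a_{n-2}=a_n$ for $n\geq 1$, which is an elementary one-line check from $a_n=(q-q^{-n})/(q-1)$. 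A careful match of the constant term $1/B_\sigma$ (using $-\tfrac{1}{q}+\tfrac{q+1}{q}=1$) and of the boundary cases $t=m-2$ (where $K_{v_{m-2}}$ contributes $0$ and the $K_{v_{m-1}}$ contribution equals $\tfrac{q+1}{q}a_0=a_1$) completes the identification. The only real subtlety is keeping the index shifts consistent, in particular the boundary role of $a_{-1}=0$ and the fact that at $t=m-1$ the terms in $K_{v_{m-1}}$ and $K_{v_{m-2}}$ both vanish, so the $\Gamma$-term picks up the entire contribution.
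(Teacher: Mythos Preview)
Your proof is correct but follows a genuinely different route from the paper's. The paper verifies the reproducing property $\langle g,K_z\rangle_{\mathcal A^2(\sigma)}=g(z)$ directly for the proposed right-hand side: it uses Lemma~\ref{lemma} to evaluate $\langle g,(\Gamma(v_{m-1},z,\cdot))^H_{m-1}\rangle$ as a sum over $s(v_{m-1})$, then combines this with the (inductively known) reproducing properties of $K_{v_{m-1}}$ and $K_{v_{m-2}}$ and the harmonicity of $g$ at $v_{m-1}$ to recover $g(z)$. You instead expand every $K$-function in the orthonormal basis via \eqref{decomp} and match coefficients term by term; the key observation that $(\Gamma(v_{m-1},z,\cdot))^H_{m-1}=\sum_j\overline{e_{v_{m-1},j}(z)}\,f_{v_{m-1},j}$ follows from the reproducing-kernel expansion in $W_{v_{m-1}}$, and the remaining coefficient identity $(q+1)a_{n-1}-a_{n-2}=qa_n$ is exactly the one the paper invokes later in the proof of Theorem~\ref{kernelformula}. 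In effect you are running that later computation in reverse. Your approach makes the recursion transparent as a pure coefficient identity and sidesteps the case-by-case reproducing-property check; the paper's approach is slightly more self-contained at this point in the exposition, since it does not need to unwind the $W_v$ reproducing kernel as a basis sum.
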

	\begin{proof}
		Since the measure $\sigma$ is finite and the constant functions are harmonic, $K_o=\frac{1}{B_\sigma}\in\mathcal{A}^2(\sigma)$. The reproducing property follows from the computations in~\eqref{nucleoino}.\\
		Now we observe that for every $v,\,z\in X$ such that $z\in\sect{v}$ and $g\in\cA^2(\sigma)$,
		by Lemma~\ref{lemma} and $\supp(\Gamma(v,z,\cdot))=\sect{v}\setminus\{v\}$, we have
		\begin{align}\label{prodscal}
			\langle g,(\Gamma(v,z,\cdot))_{|v|}^H \rangle_{\cA^2(\sigma)}
			&=\sum_{|y|=|v|+1}(b_{|v|}g(y)-b_{|v|}'g(p(y)))\Gamma(v,z,y)\nonumber\\
			&=b_{|v|}\sum_{y\in s(v)}g(y)\Gamma(v,z,y)-b'_{|v|}g(v)\sum_{y\in s(v)}\Gamma(v,z,y)\nonumber\\
			&=b_{|v|}\sum_{y\in s(v)}g(y)\Gamma(v,z,y),
		\end{align}
		where we used $\Gamma(v,z,\cdot)|_{s(v)}\in W_v$.\\
		We now consider the case when $|z|=1$. The function $K_z\in\mathcal{A}^2(\sigma)$ because it is sum of functions in $\mathcal{A}^2(\sigma)$.
		We prove the reproducing property. For $g\in\mathcal{A}^2(\sigma)$, by the reproducing formula of $K_o$ and~\eqref{prodscal} with $v=o$,
		\begin{align*}
			\langle g,K_z \rangle_{\mathcal{A}^2(\sigma)}&=g(o)+ \frac{1}{b_0}\langle g,(\Gamma(o,z,\cdot))^H_{0}\rangle_{\mathcal{A}^2(\sigma)}\\
			&=g(o)+
			\sum_{|y|=1}g(y)\Gamma(o,z,y)\\
			&=g(o)+\frac{q}{q+1}g(z)-\frac{1}{q+1}\sum_{\substack{|y|=1\\y\neq z}}g(y)=g(z),
		\end{align*}
		where we used that $g$ is harmonic at $o$. 
		\\
		It remains to consider the case when $|z|=m>1$. We have $K_z\in\mathcal{A}^2(\sigma)$ since it is the sum of bounded and harmonic functions. For $g\in\mathcal{A}^2(\sigma)$ by induction on $m$ and~\eqref{prodscal} with $v=v_{m-1}$ we have 
		\begin{align*}
			\langle g,K_z \rangle_{\mathcal{A}^2(\sigma)}&=-\frac{1}{q}g(v_{m-2})+\frac{q+1}{q}g(v_{m-1})+ \frac{1}{b_{m-1}}\langle g,(\Gamma(v_{m-1},z,\cdot))^H_{m-1} \rangle_{\mathcal{A}^2(\sigma)}\\
			&=-\frac{1}{q}g(v_{m-2})+\frac{q+1}{q}g(v_{m-1})+ \sum_{y\in s(v_{m-1})}\Gamma(v_{m-1},z,y)g(y)\\
			&=-\frac{1}{q}g(v_{m-2})+\frac{1}{q}\sum_{y\sim v_{m-1}}g(y)+\frac{q-1}{q}g(z)-\frac{1}{q}\sum_{\substack{y\in s(v_{m-1})\\y\neq z}}g(y)=g(z),
		\end{align*}
		where we used the fact that $g$ is harmonic at $v_{m-1}$.
	\end{proof}
	
	In Proposition~\ref{ricorsiva} the kernel $K_z$ is expressed through a two-step recursive formula. We want to find an explicit formula for $K_z$. 
	
	%
	\begin{thm}\label{kernelformula}
		For every $(z,x)\in X\times X$
		\begin{equation}\label{kernel}
			K(z,x)=\frac{1}{B_\sigma}+\frac{q^2}{(q-1)^2}\sum_{v\in X}\frac{1}{b_{|v|}}\Gamma(v,z,x)(1-q^{|v|-|z|})(1-q^{|v|-|x|}). 
		\end{equation}
	\end{thm}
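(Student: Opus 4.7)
The strategy is to start from the expansion of $K_z$ in the orthonormal basis $\cB_\sigma$ of \eqref{basis}, already written out in~\eqref{decomp} as
\[K_z(x) = \frac{1}{B_\sigma} + \sum_{v\in X}\frac{1}{b_{|v|}}\sum_{j\in I_v}\overline{f_{v,j}(z)}\,f_{v,j}(x),\]
and to evaluate the inner sum $S_v(z,x):=\sum_{j\in I_v}\overline{f_{v,j}(z)}f_{v,j}(x)$ in closed form. The point is that $S_v(z,x)$ factors through the reproducing kernel of $W_v$, which by the computation already carried out in the excerpt is precisely $\Gamma(v,\cdot,\cdot)$.

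\textbf{Step 1: writing $S_v$ via the reproducing kernel of $W_v$.} I would first observe that, since $\{e_{v,j}\}_{j\in I_v}$ is an orthonormal basis of $W_v$ whose reproducing kernel is $\Gamma(v,\cdot,\cdot)$, the standard identity
\[\Gamma(v,y_1,y_2)=\sum_{j\in I_v}\overline{e_{v,j}(y_1)}\,e_{v,j}(y_2)\qquad (y_1,y_2\in s(v))\]
holds; this is obtained by expanding $\Gamma(v,y_1,\cdot)\in W_v$ in the basis $\{e_{v,j}\}_{j\in I_v}$ and using the reproducing property (together with the fact that $\Gamma$ is real-valued).

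\textbf{Step 2: unfolding the harmonic extension.} By~\eqref{fvj}, for $z\in\sect{v}\setminus\{v\}$ we have $f_{v,j}(z)=a_{|z|-|v|-1}\,e_{v,j}(p^{|z|-|v|-1}(z))$, and similarly for $x$; outside $\sect{v}\setminus\{v\}$ the function $f_{v,j}$ is identically zero. Substituting into $S_v(z,x)$ and applying Step~1,
\[S_v(z,x)=a_{|z|-|v|-1}\,a_{|x|-|v|-1}\,\Gamma\bigl(v,p^{|z|-|v|-1}(z),p^{|x|-|v|-1}(x)\bigr)\]
whenever $z,x\in\sect{v}\setminus\{v\}$, and $S_v(z,x)=0$ otherwise. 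Since $\Gamma(v,\cdot,\cdot)$ depends on its second and third arguments only through the child of $v$ containing them, the projections $p^{|z|-|v|-1}(z)$ and $p^{|x|-|v|-1}(x)$ can be replaced by $z$ and $x$, giving
\[S_v(z,x)=a_{|z|-|v|-1}\,a_{|x|-|v|-1}\,\Gamma(v,z,x),\]
and this identity also covers the degenerate cases: if $z\notin\sect{v}\setminus\{v\}$ (including $z=v$, in which case $a_{-1}=0$) both sides vanish because $\Gamma(v,z,\cdot)\equiv 0$ or the $a$-factor is zero.

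\textbf{Step 3: closed-form computation of the $a$-factors and conclusion.} From $a_n=(q-q^{-n})/(q-1)$ one immediately gets
\[a_{|z|-|v|-1}=\frac{q\bigl(1-q^{|v|-|z|}\bigr)}{q-1},\qquad a_{|x|-|v|-1}=\frac{q\bigl(1-q^{|v|-|x|}\bigr)}{q-1},\]
so that $a_{|z|-|v|-1}a_{|x|-|v|-1}=\frac{q^2}{(q-1)^2}(1-q^{|v|-|z|})(1-q^{|v|-|x|})$. Plugging this into the expansion of $K_z(x)$ yields exactly~\eqref{kernel}. Convergence of the sum over $v\in X$ is not an issue: as noted right after~\eqref{decomp}, the only $v$ for which $\Gamma(v,z,x)\neq 0$ lie in the finite set $[o,p(z)]\cap[o,p(x)]$, so the sum is actually finite pointwise.

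\textbf{Expected obstacle.} No genuinely deep step is involved; the proof is essentially an assembly. The one bookkeeping subtlety I would be careful about is making sure that the formula handles uniformly the boundary cases (namely $z=o$, $x=o$, $z=v$, $x=v$, and the cases where $z$ or $x$ lies outside $\sect{v}$), so that the single expression on the right-hand side of~\eqref{kernel} really reproduces the piecewise definitions appearing in Proposition~\ref{ricorsiva} and in the orthonormal expansion. The verification that both sides vanish simultaneously in each degenerate situation, via the factors $\Gamma(v,z,x)$, $(1-q^{|v|-|z|})$ and $(1-q^{|v|-|x|})$, is what ties the argument together.
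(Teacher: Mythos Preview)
Your proof is correct and in fact more direct than the paper's. The paper does not argue straight from the expansion~\eqref{decomp}; instead it first establishes Proposition~\ref{ricorsiva} (a two-step recursion for $K_z$ in terms of $K_{v_{m-1}}$, $K_{v_{m-2}}$ and $(\Gamma(v_{m-1},z,\cdot))^H_{m-1}$), then proves by induction on $|z|$ the intermediate formula~\eqref{kernelesplicito}
\[K_z(x)=\frac{1}{B_\sigma}+\sum_{t=0}^{|z|-1}a_{|z|-t-1}\,\frac{1}{b_t}\,(\Gamma(v_t,v_{t+1},x))^H_t,\]
and finally unfolds the harmonic extension via~\eqref{eq:Gammaharm} to reach~\eqref{kernel}. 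Your route bypasses the recursion entirely: you stay with the basis expansion~\eqref{decomp}, recognize that the inner sum $\sum_{j\in I_v}\overline{e_{v,j}(y_1)}e_{v,j}(y_2)$ is exactly the reproducing kernel $\Gamma(v,y_1,y_2)$ of $W_v$, and then use the explicit form~\eqref{fvj} of $f_{v,j}$ together with the observation that $\Gamma(v,\cdot,\cdot)$ is constant along sectors of children of $v$. This is shorter and conceptually cleaner for the purpose of proving the theorem; the paper's detour has the side benefit of producing the recursive description of $K_z$ (Proposition~\ref{ricorsiva}) and the intermediate representation~\eqref{kernelesplicito}, which display the kernel as a telescoping superposition of harmonically extended layers, but these are not logically needed for~\eqref{kernel} itself.
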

	\begin{proof}
		Let $z\in X$ and $[o,z]=\{v_t\}_{t=0}^{|z|}$. We start by proving that
		\begin{align}\label{kernelesplicito} K_z(x)&=\frac{1}{B_\sigma}+\sum_{t=0}^{|z|-1}a_{|z|-t-1}\frac{1}{b_t}(\Gamma(v_{t},v_{t+1},x))^H_{t},\qquad x\in X.
		\end{align}
		The case $z=o$ follows trivially from Proposition~\ref{ricorsiva} and the convention on sums stated in the Introduction.
		We prove \eqref{kernelesplicito} by induction on $m=|z|\geq 1$. The case $m=1$ directly follows from Proposition~\ref{ricorsiva}, too. Let $m\in\mathbb{N}$, $m>1$ and $z\in X$, with $|z|=m$, and suppose that \eqref{kernelesplicito} holds for every vertex in $\ball{o}{m-1}$. Hence by Proposition~\ref{ricorsiva} we have
		\begin{align*}
			K_z=&-\frac{1}{q}K_{v_{m-2}}+\frac{q+1}{q}K_{v_{m-1}}+\frac{1}{b_{m-1}}(\Gamma(v_{m-1},z,\cdot))^H_{m-1}\\
			=&-\frac{1}{q}\left[\frac{1}{B_\sigma}+\sum_{t=0}^{m-3}a_{m-t-3}\frac{1}{b_t}(\Gamma(v_{t},v_{t+1},\cdot))^H_{t}\right]\\
			&+\frac{q+1}{q}\left[\frac{1}{B_\sigma}+\sum_{t=0}^{m-2}a_{m-t-2}\frac{1}{b_t}(\Gamma(v_{t},v_{t+1},\cdot))^H_{t}\right]+\frac{1}{b_{m-1}}(\Gamma(v_{m-1},z,\cdot))^H_{m-1}\\
			=&\frac{1}{B_\sigma}+\sum_{t=0}^{m-2}\left(\frac{q+1}{q}a_{m-t-2}-\frac{1}{q}a_{m-t-3}\right)\frac{1}{b_t}(\Gamma(v_{t},v_{t+1},\cdot))^H_{t}\\&+\frac{1}{b_{m-1}}(\Gamma(v_{m-1},z,\cdot))^H_{m-1}\\
			=&\frac{1}{B_\sigma}+\sum_{t=0}^{m-1}a_{m-t-1}\frac{1}{b_t}(\Gamma(v_{t},v_{t+1},\cdot))^H_{t},
		\end{align*}
		where we used $(q+1)a_{n-1}-a_{n-2}=qa_{n}$.
		Hence we proved~\eqref{kernelesplicito} by induction.
		Since ${\rm supp}((\Gamma(v_{t},v_{t+1},\cdot))^H_{t})=\sect{v_t}\setminus\{v_t\}$,
		we have that the $t$-th term of the sum in~\eqref{kernelesplicito} does not vanish if and only if $x\in T_{v_t}\setminus\{v_t\}$, that is when $v_t\in[o,x]$, and hence by~\eqref{eq:Gammaharm}, we have
		\begin{align*}
			K(z,x)&=\frac{1}{B_\sigma}+\sum_{v\in X} \frac{1}{b_{|v|}}a_{|z|-|v|-1}a_{|x|-|v|-1}\Gamma(v,z,x)\\
			&=\frac{1}{B_\sigma}+\frac{q^2}{(q-1)^2}\sum_{v\in X}\frac{1}{b_{|v|}}(1-q^{|v|-|z|})(1-q^{|v|-|x|})\Gamma(v,z,x).
		\end{align*}
	\end{proof}
	
	\begin{oss}
		The confluent of two vertices $z,x\in X$ is the common vertex of $[o,x]$ and $[o,z]$ farthest from $o$, denoted by $z\wedge x$. 
		It is possible to see that the value of the kernel $K$ at $(z,x)\in X\times X$ depends only on the values of $|x|$, $|z|$ and $|z\wedge x|$. Indeed, from~\eqref{kernel} and the fact that $\Gamma(v,z,x)$ does not vanish if and only if $v\in[o,z]\cap[o,x]=[o,z\wedge x]$, we have
		\begin{equation}\label{kernelconfluente}
			\begin{split}
			K_z(x)&=\frac{1}{B_\sigma}+\frac{q^2}{(q-1)^2}\sum_{t=0}^{|z\wedge x|}\frac{1}{b_t}a_{|z|-t-1}a_{|x|-t-1}(1-q^{t-|z|})(1-q^{t-|x|})\\
			&=\frac{1}{B_\sigma}+\frac{q^2}{(q-1)^2}\sum_{t=0}^{|z\wedge x|}\frac{1}{b_t}\Gamma(v_t,z,x)(1-q^{t-|z|})(1-q^{t-|x|}),
		\end{split}
		\end{equation}
		where $\{v_t\}_{t=0}^{|z|}=[o,z]$.
		Furthermore, it is clear that $K$ is symmetric, that is $K(z,x)=K(x,z)$.
	\end{oss}

	In the following sections we restrict our attention to the family of the exponentially decreasing radial measures $\mu_\alpha$, $\alpha>1$, defined in Example~\ref{ex}.
	
	We shall use the notation $L_\alpha^p$ and $\mathcal{A}_\alpha^p$ for the Lebesgue and Bergman spaces with respect to $\mu_\alpha$, respectively. Furthermore, we denote by $K_\alpha\colon X\times X\to\R$ the reproducing kernel of $\mathcal{A}_\alpha^2$. It will be useful to keep track of the weight in the constants introduced in~\eqref{bt}, so we denote them by $b_{\alpha,n}$. In particular observe that in this case there is a relation between the constants: for every $n\in\mathbb{N}$
	\begin{equation}\label{Bt}\begin{split}
			b_{\alpha,n}&=\sum_{m=n+1}^{+\infty}\left[q^{-\alpha m}\left(\sum_{k=0}^{m-n-1}q^k\right)\left(\sum_{j=0}^{m-n-1}q^{-j}\right) \right]\\
			&=\sum_{\ell=1}^{+\infty}\left[q^{-\alpha (\ell+n)}\left(\sum_{k=0}^{\ell-1}q^k\right)\left(\sum_{j=0}^{\ell-1}q^{-j}\right) \right]=q^{-\alpha n}b_{\alpha,0}.
		\end{split}
	\end{equation}
	Furthermore we set $B_\alpha=\mu_{\alpha}(X)$.
	
	Now we show that the kernel $K_\alpha$ satisfies an integral condition which will be formalized in Section~\ref{seccz}, see~\eqref{hormander}.
	\begin{prop}\label{prophormalpha}
		The following holds
		\begin{equation}\label{hormalpha}
			\sup_{v\in X\setminus\{o\}}\sup_{x,y\in \sect{v}}\sum_{z\in X\setminus \sect{v}}|K_\alpha(z,x)-K_\alpha(z,y)|q^{-\alpha|z|}<+\infty. 
		\end{equation}
	\end{prop}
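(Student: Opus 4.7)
\medskip

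\textbf{Proof plan.} The plan is to exploit the closed-form expression \eqref{kernel} together with the scaling $b_{\alpha,|v|}=q^{-\alpha|v|}b_{\alpha,0}$ from \eqref{Bt}, and to reduce the difference $K_\alpha(z,x)-K_\alpha(z,y)$ to a short, geometrically transparent sum indexed by the path from $o$ to the confluent $z\wedge v$. Fix $v\in X\setminus\{o\}$, $x,y\in T_v$ and $z\in X\setminus T_v$. The first geometric observation is that, since $x,y\in T_v$ both extend the geodesic $[o,v]$ while $[o,z]$ leaves this geodesic strictly before $v$, one has
\[
z\wedge x \;=\; z\wedge y \;=\; z\wedge v \;=:u, \qquad m:=|u|<|v|.
\]
Therefore in \eqref{kernel} only those summands with $w\in[o,u]$ (and $w\ne z$) contribute to either $K_\alpha(z,x)$ or $K_\alpha(z,y)$.

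Next I would check that the factor $\Gamma(w,z,x)$ coincides with $\Gamma(w,z,y)$ for every such $w$: for $w$ strictly above $u$, both $z$ and $x,y$ lie in the common successor-branch of $w$, while for $w=u$ the vertices $z$ and $x,y$ lie in different successor-branches (or $z=u$, in which case $\Gamma(u,z,\cdot)\equiv 0$). Consequently the only dependence on $x$ versus $y$ comes from the factor $(1-q^{|w|-|\cdot|})$, and the difference telescopes to
\[
K_\alpha(z,x)-K_\alpha(z,y)\;=\;\frac{q^2\,(q^{-|y|}-q^{-|x|})}{(q-1)^2 b_{\alpha,0}}\sum_{\substack{w\in[o,u]\\ w\ne z}}q^{(\alpha+1)|w|}\,\Gamma(w,z,x)\,(1-q^{|w|-|z|}).
\]
Using $|\Gamma|\le 1$, $|1-q^{|w|-|z|}|\le 1$ (as $|w|\le m\le|z|$), and the trivial bound $|q^{-|y|}-q^{-|x|}|\le q^{-|v|}$ since $|x|,|y|\ge|v|$, a geometric summation in $|w|=0,1,\dots,m$ yields
\[
\bigl|K_\alpha(z,x)-K_\alpha(z,y)\bigr| \;\lesssim\; q^{-|v|}\,q^{(\alpha+1)m(z)},\qquad m(z):=|z\wedge v|.
\]

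The remaining task is to sum $q^{-\alpha|z|}$ times this bound over $z\in X\setminus T_v$. I would partition $X\setminus T_v$ according to the value $m=m(z)\in\{0,1,\dots,|v|-1\}$ and the norm $|z|=n\ge m$: for each such $m$ there are at most $Cq^{n-m-1}$ vertices with $|z|=n$ and $m(z)=m$ (the bound being $1$ when $n=m$ and reflecting the $q-1$ \textquotedblleft wrong\textquotedblright\ successors of $v^*_m$ otherwise). This gives the double sum
\[
\sum_{z\in X\setminus T_v}q^{(\alpha+1)m(z)}\,q^{-\alpha|z|} \;\lesssim\; \sum_{m=0}^{|v|-1}q^{\alpha m}\sum_{n=m}^{+\infty}q^{(1-\alpha)n},
\]
where the inner geometric series converges thanks to the assumption $\alpha>1$. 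Summing it gives a multiple of $\sum_{m=0}^{|v|-1}q^m\lesssim q^{|v|}$. Combining with the factor $q^{-|v|}$ extracted earlier, the $q^{\pm|v|}$ factors cancel and one obtains a bound independent of $v$, $x$, $y$, proving \eqref{hormalpha}.

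The main obstacle is the first step: verifying carefully that $\Gamma(w,z,x)=\Gamma(w,z,y)$ for all $w\in[o,u]$ so that the telescoping is legitimate, and correctly handling the borderline case $z=u\in[o,v]$; once this is secured, the rest is a routine geometric-series calculation, with the convergence at infinity being precisely where the condition $\alpha>1$ enters.
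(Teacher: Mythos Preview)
Your proposal is correct and follows essentially the same route as the paper: both arguments use the closed form \eqref{kernel} with $b_{\alpha,|w|}=q^{-\alpha|w|}b_{\alpha,0}$, observe that for $z\notin T_v$ the confluent and the $\Gamma$-factors are the same for any point of $T_v$, extract the factor $q^{-|v|}$ from the difference, and finish with the same geometric double sum over $m=|z\wedge v|$ and $|z|$. The only cosmetic difference is that the paper first reduces to the case $y=v$ and recovers the general case by the triangle inequality, whereas you compare $x$ and $y$ directly; the estimates are identical.
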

	\begin{proof}
		Let $v\in X\setminus\{o\}$. We start by proving that~\eqref{hormalpha} holds for $y=v$.
		Consider $x\in \sect{v}$ and observe that if $z\in X\setminus \sect{v}$, then $z\wedge x=z\wedge v$ and $\Gamma(u,z,x)=\Gamma(u,z,v)$ for every $u\in [o,z\wedge v]$. Hence, if we put $[o,z\wedge v]=\{u_t\}_{t=0}^{|z\wedge v|}$, then from~\eqref{kernelconfluente} we have
		\begin{align*}
			K_\alpha(z,x)-K_\alpha(z,v)&=\frac{q^2b_{\alpha,0}}{(q-1)^2}\sum_{t=0}^{|z\wedge v|}q^{\alpha t}\Gamma(u_t,z,v)(1-q^{t-|z|})(q^{t-|v|}-q^{t-|x|})\\
			&=\frac{q^2b_{\alpha,0}}{(q-1)^2}\sum_{t=0}^{|z\wedge v|}q^{(1+\alpha) t}\Gamma(u_t,z,v)(1-q^{t-|z|})(q^{-|v|}-q^{-|x|}).
		\end{align*}
		Then, since $|\Gamma(u_t,z,v)|<1$ and $0<q^{-|v|}-q^{-|x|}\leq q^{-|v|}$, we have
		\[\sum_{z\in X\setminus \sect{v}}|K_\alpha(z,x)-K_\alpha(z,v)|q^{-\alpha|z|}\lesssim\sum_{z\in X\setminus \sect{v}}\sum_{t=0}^{|z\wedge v|}q^{(1+\alpha) t}q^{-|v|}q^{-\alpha|z|}.\]
		Observe that, since $z\in X\setminus \sect{v}$, then $|z\wedge v|\in\{0,\dots,|v|-1\}$. Then, for every $\ell \in\{0,\dots,|v|-1\}$, we put 
		\[Y_\ell=\{z\in X\colon |z\wedge v|=\ell\}=\sect{u_\ell}\setminus\sect{u_{\ell+1}}.\]
		Since $\mu_\alpha$ is radial, 
		\[\mu_\alpha(Y_\ell)=\mu_\alpha(u_\ell)+(\# s(u_\ell)-1)\mu_\alpha(\sect{u_{\ell+1}})=\begin{dcases*}
			\frac{1}{1-q^{1-\alpha}},&if $\ell=0$;\\
			q^{-\alpha\ell}\frac{1-q^{-\alpha}}{1-q^{1-\alpha}},&if $1\leq\ell<|v|$.
		\end{dcases*}\]
		Hence we have that 
		\begin{align*}
			\sum_{z\in X\setminus \sect{v}}|K_\alpha(z,x)-K_\alpha(z,v)|q^{-\alpha|z|}&\lesssim
			\sum_{\ell=0}^{|v|-1}q^{(1+\alpha)\ell}q^{-|v|}
			\sum_{z\in Y_\ell}q^{-\alpha|z|}\\
			&\lesssim
			\sum_{\ell=0}^{|v|-1}q^{(1+\alpha)\ell}q^{-|v|}q^{-\alpha \ell}
			\\
			&\simeq 
			\sum_{\ell=0}^{|v|-1}q^{\ell-|v|}\leq \frac{q}{q-1}.
		\end{align*}
		Finally, by the triangular inequality we have~\eqref{hormalpha} for every $y\in \sect{v}$.
	\end{proof}

	\section{Boundedness of the Bergman projector on $L^p_\alpha$}\label{sec:bound}
	
	In this section we study the boundedness properties of the extension of the Bergman projector to $L^p_\alpha$ spaces.
	For the class of exponentially decreasing radial measures we are able to prove that the extension of the Bergman projector to the relative weighted $L^p$-space is bounded if and only if $p>1$ (see Theorem~\ref{theobound}).

	In analogy with the operators studied by Zhu in Section~3.4 of~\cite{zhuoperator}, we introduce two families of operators. For any real parameters $a,b$ and for $c>1$, we define the integral operators
	\[S_{a,b,c}f(z)=q^{-a|z|}\sum_{x\in X}|K_c(z,x)|f(x)q^{-b|x|}, \]
	\[T_{a,b,c}f(z)=q^{-a|z|}\sum_{x\in X}K_c(z,x)f(x)q^{-b|x|}. \]
	
	We prove two results that imply the boundedness properties of the Bergman projectors. Theorem~\ref{3.11} is devoted to the study of the boundedness of $S_{a,b,c}$ and $T_{a,b,c}$ on weighted $L^p$-spaces for $p>1$; the case $p=1$ needs different arguments and for this reason is treated apart in Theorem~\ref{3.12}. The two theorems are the analogues of Theorem~3.11 and Theorem~3.12 in~\cite{zhuoperator}, respectively. The proofs of both theorems are postponed to Subsection~\ref{secproofs}.
	\begin{thm}\label{3.11}
		Let $\alpha>1$, $c>1$ and $1<p<\infty$. The following conditions are equivalent:
		\begin{enumerate}[label=(\roman*)]
			\item\label{3.11 a} the operator $S_{a,b,c}$ is bounded on $L^p_\alpha$;
			\item\label{3.11 b} the operator $T_{a,b,c}$ is bounded on $L^p_\alpha$;
			\item\label{3.11 c} the parameters satisfy
			\[c\leq a+b,\qquad -pa<\alpha-1<p\left(b-1\right). \]
		\end{enumerate}
	\end{thm}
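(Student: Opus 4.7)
Since $|T_{a,b,c}f(z)|\leq S_{a,b,c}|f|(z)$ pointwise, the implication (i) $\Rightarrow$ (ii) is immediate, so it suffices to close the cycle via (iii) $\Rightarrow$ (i) and (ii) $\Rightarrow$ (iii).

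\textbf{(iii) $\Rightarrow$ (i) via Schur's test.} The first step is the pointwise estimate
\[|K_c(z,x)|\lesssim q^{c|z\wedge x|},\]
obtained from the closed form~\eqref{kernelconfluente} upon inserting $|\Gamma(v,z,x)|\leq 1$, $a_n\leq q/(q-1)$, and $b_{c,n}=q^{-cn}b_{c,0}$ from~\eqref{Bt}; the resulting finite sum collapses to a geometric progression in $q^{ct}$ dominated by its top term. Viewing $S_{a,b,c}$ as a kernel operator on $L^p(\mu_\alpha)$ with effective kernel $\tilde K(z,x)=q^{-a|z|}|K_c(z,x)|q^{(\alpha-b)|x|}$, Schur's test with the power test function $h(x)=q^{s|x|}$ reduces to verifying
\[\sum_{x\in X}q^{c|z\wedge x|+(sp'-b)|x|}\lesssim q^{(sp'+a)|z|}\quad\text{and}\quad \sum_{z\in X}q^{c|z\wedge x|+(sp-a-\alpha)|z|}\lesssim q^{(sp-\alpha+b)|x|}\]
for a suitable $s\in\mathbb R$. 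I would evaluate each sum by stratifying according to $\ell=|z\wedge x|$: if $[o,z]=(u_0,\dots,u_{|z|})$, the stratum $\{x:|z\wedge x|=\ell\}$ is $\sect{u_\ell}\setminus\sect{u_{\ell+1}}$ for $\ell<|z|$ and $\sect{z}$ for $\ell=|z|$, with $\simeq q^{m-\ell}$ elements on the sphere of radius $m\geq\ell$ by~\eqref{cardsfera}. The inner sums in $m$ are geometric series, convergent exactly when $sp'<b-1$ and $sp<a+\alpha-1$, while the outer sums in $\ell$ have growth rate governed by $c\leq a+b$. A direct algebraic check then shows that the admissible window for $s$ is non-empty exactly under the three inequalities in~(iii).

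\textbf{(ii) $\Rightarrow$ (iii) by testing.} I would test $T_{a,b,c}$ on delta masses and use duality. First, with $f=\mathbbm{1}_{\{o\}}$ and $K_c(z,o)=1/B_c$ (which is immediate from~\eqref{kernelconfluente} since every factor $\Gamma(v,z,o)$ vanishes for $v\in[o,o]=\{o\}$), one has $T_{a,b,c}f(z)=q^{-a|z|}/B_c$, and its $L^p_\alpha$-norm is finite iff $-pa<\alpha-1$. Second, for $w\in X$ with $|w|$ large, the sharp lower bound $K_c(w,w)\simeq q^{c|w|}$ (which follows from the positivity of $\Gamma(u_t,w,w)$ for $t<|w|$--equal to $q/(q+1)$ at $t=0$ and $(q-1)/q$ for $0<t<|w|$--together with $(1-q^{t-|w|})^2\simeq 1$ for $t\leq |w|-1$, so the sum in~\eqref{kernelconfluente} is dominated by its top non-zero term) yields $|T_{a,b,c}\mathbbm{1}_{\{w\}}(w)|\simeq q^{(c-a-b)|w|}$; comparing with $\|\mathbbm{1}_{\{w\}}\|_{L^p_\alpha}=q^{-\alpha|w|/p}$ as $|w|\to\infty$ forces $c\leq a+b$. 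Third, a direct computation using the symmetry of $K_c$ shows that the $L^p_\alpha$--$L^{p'}_\alpha$ adjoint of $T_{a,b,c}$ is $T_{b-\alpha,a+\alpha,c}$; applying the first test to this adjoint yields $-p'(b-\alpha)<\alpha-1$, which rearranges to the remaining condition $\alpha-1<p(b-1)$.

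\textbf{Main obstacle.} The most technical step is the Schur-test bookkeeping: the single parameter $s$ must satisfy four exponent constraints simultaneously, and a careful rearrangement is needed to show that their joint compatibility is exactly~(iii). A further delicate point is the sharpness of the lower bound $K_c(w,w)\simeq q^{c|w|}$ in the necessity direction, since $K_c$ is a priori signed through the $\Gamma$-factors; sharpness is secured here by the explicit positivity of $\Gamma(u_t,w,w)$ on the diagonal for all relevant $t$, which rules out cancellation.
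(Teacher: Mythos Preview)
Your proposal is correct and follows the same overall architecture as the paper: the implication (i)\,$\Rightarrow$\,(ii) is trivial, (iii)\,$\Rightarrow$\,(i) goes through Schur's test with a power weight, and (ii)\,$\Rightarrow$\,(iii) is obtained by testing plus duality. In particular, your Schur argument is equivalent to the paper's Proposition~\ref{p>1impliesa}: the paper packages the stratification by $|z\wedge x|$ into Lemma~\ref{lemstimaK}, whereas you first pass to the pointwise bound $|K_c(z,x)|\lesssim q^{c|z\wedge x|}$ and stratify afterwards---the resulting sums and the window of admissible exponents coincide. Your duality step is identical to Proposition~\ref{lemp>1}.

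The one genuine difference is in the choice of test functions for the necessity direction. To obtain $-pa<\alpha-1$ the paper (Proposition~\ref{-pa<}) tests on the radial function $f(x)=q^{-R|x|}$ and uses harmonicity of $K_c(z,\cdot)$ to evaluate the sphere averages; your delta at $o$ achieves the same via $K_c(z,o)=1/B_c$ with less machinery. More notably, for $c\leq a+b$ the paper (Proposition~\ref{a+b>c}) tests on the weighted basis functions $g_{v,j}=f_{v,j}\,q^{-R|\cdot|}$ and exploits the orthogonality of $\cB$ together with~\eqref{squarenorm} and~\eqref{Bt} to compute $T_{a,b,c}g_{v,j}$ exactly; your argument using $\mathbbm{1}_{\{w\}}$ and the diagonal lower bound $K_c(w,w)\gtrsim q^{c|w|}$ is markedly more elementary and bypasses the basis entirely. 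Both routes yield the same inequality, but yours requires only the positivity of the $\Gamma$-terms on the diagonal, while the paper's leans on the Hilbert-space structure developed in Section~\ref{sub:basis}.
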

	\begin{thm}\label{3.12} Let $\alpha>1$ and $c>1$. The following conditions are equivalent:
		\begin{enumerate}[label=(\roman*)]
			\item\label{3.12 a} the operator $S_{a,b,c}$ is bounded on $L^1_\alpha$;
			\item\label{3.12 b} the operator $T_{a,b,c}$ is bounded on $L^1_\alpha$;
			\item\label{3.12 c} the parameters either satisfy
			\[c=a+b,\qquad -a<\alpha-1<b-1,\]
			or satisfy
			\[c<a+b,\qquad -a<\alpha-1\leq b-1. \]
		\end{enumerate}
	\end{thm}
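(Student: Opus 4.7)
\emph{Plan.} The strategy rests on four steps: a Schur-type reformulation, a two-sided estimate of the kernel, a combinatorial computation of one scalar tail, and a final case analysis. For $f\geq 0$, Fubini gives
\[\|S_{a,b,c}f\|_{L^1_\alpha}=\sum_{x\in X}f(x)q^{-b|x|}\sum_{z\in X}q^{-(\alpha+a)|z|}|K_c(z,x)|,\]
so that~\ref{3.12 a} is equivalent to the uniform bound
\begin{equation}\label{star12}
\sup_{x\in X}q^{(\alpha-b)|x|}\sum_{z\in X}q^{-(\alpha+a)|z|}|K_c(z,x)|<+\infty.
\end{equation}
The pointwise inequality $|T_{a,b,c}f(z)|\leq S_{a,b,c}|f|(z)$ immediately yields $\ref{3.12 a}\Rightarrow\ref{3.12 b}$, while testing $T_{a,b,c}$ on a point mass $\mathbbm{1}_{\{x\}}$ gives $|T_{a,b,c}\mathbbm{1}_{\{x\}}(z)|=q^{-a|z|}|K_c(z,x)|q^{-b|x|}=S_{a,b,c}\mathbbm{1}_{\{x\}}(z)$, so that boundedness of $T_{a,b,c}$ also forces~\eqref{star12}. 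It then suffices to prove that~\eqref{star12} is equivalent to~\ref{3.12 c}.

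For the kernel I would combine Theorem~\ref{kernelformula} with the identity $b_{c,n}=q^{-cn}b_{c,0}$ from~\eqref{Bt} and the confluent expansion~\eqref{kernelconfluente} to derive the two-sided estimate
\[|K_c(z,x)|\simeq q^{c|z\wedge x|}\]
(the small values of $|z\wedge x|$ being absorbed into universal constants through the additive $1/B_c$ in~\eqref{kernelconfluente}). The upper bound is immediate from $|\Gamma(v_t,z,x)|\leq 1$, $|1-q^{t-|z|}|,\,|1-q^{t-|x|}|\leq 1$, and $\sum_{t\leq k}q^{ct}\simeq q^{ck}$. The lower bound follows by isolating the leading term at $t=|z\wedge x|$ in~\eqref{kernelconfluente}, whose magnitude $\simeq q^{c|z\wedge x|}$ dominates the lower-order contributions by a factor $q^{c-1}>1$.

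With the kernel estimate at hand I would fix $x$ and stratify $X$ by $k=|z\wedge x|\in\{0,\dots,|x|\}$. The number of vertices $z$ with $|z|=n$ and $|z\wedge x|=k$ is $1$ for $n=k$ and $\simeq q^{n-k}$ for $n>k$. The inner geometric series in $n$ converges iff $\alpha+a>1$, which encodes the necessary lower inequality $-a<\alpha-1$ in~\ref{3.12 c}; under this condition $\sum_{z:\,|z\wedge x|=k}q^{-(\alpha+a)|z|}\simeq q^{-(\alpha+a)k}$, so that
\[\sum_{z\in X}q^{-(\alpha+a)|z|}|K_c(z,x)|\simeq\sum_{k=0}^{|x|}q^{(c-\alpha-a)k}.\]
Condition~\eqref{star12} therefore reduces to $\sup_{m\geq 0}q^{(\alpha-b)m}\sum_{k=0}^{m}q^{(c-\alpha-a)k}<+\infty$, which I would analyse according to the sign of $c-\alpha-a$: if positive, the inner sum is $\simeq q^{(c-\alpha-a)m}$ and forces $c\leq a+b$; if zero, it is $\simeq m+1$ and forces $b>\alpha$; if negative, it is $\simeq 1$ and forces $b\geq\alpha$. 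Merging these regimes with $\alpha+a>1$ produces exactly~\ref{3.12 c}, since $c=a+b$ together with this dichotomy forces $\alpha<b$, whereas $c<a+b$ still permits $\alpha=b$.

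The main technical obstacle is the lower bound on $|K_c|$ in the second step, since $K_c$ is sign-indefinite: when $z$ and $x$ branch at $v_{|z\wedge x|}$, the term at $t=|z\wedge x|$ in~\eqref{kernelconfluente} has opposite sign to the lower-order ones. Because the ratio of the subleading to the leading term is $\simeq q^{-(c-1)}<1$, the leading term wins and the two-sided estimate persists; for the necessity half of the case analysis one can sidestep the sign issue altogether by restricting the sum to $z\in T_x$, where every term in~\eqref{kernelconfluente} shares a common sign and the lower bound $\sum_{z\in T_x}q^{-(\alpha+a)|z|}|K_c(z,x)|\gtrsim q^{(c-\alpha-a)|x|}$ is immediate.
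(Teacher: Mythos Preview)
Your approach is correct in outline and genuinely different from the paper's. The paper establishes \ref{3.12 b}$\Rightarrow$\ref{3.12 c} through three separate devices: a radial test function $q^{-R|x|}$ for $-a<\alpha-1$, the basis functions $f_{v,j}q^{-R|x|}$ for $c\le a+b$, and a duality argument with the adjoint $T^*_{a,b,c}$ together with Lemma~\ref{lem:nonboundp=1} for the $b$ versus $\alpha$ inequalities. Your Fubini reduction to~\eqref{star12} and the observation that $T_{a,b,c}\mathbbm{1}_{\{x\}}=S_{a,b,c}\mathbbm{1}_{\{x\}}$ collapse \ref{3.12 a}$\Leftrightarrow$\ref{3.12 b}$\Leftrightarrow$\eqref{star12} into a single line; this is cleaner than the paper, which never makes (i)$\Leftrightarrow$(ii) explicit except through the common equivalence with (iii). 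Your tail computation and case analysis on the sign of $c-\alpha-a$ are also correct and match condition~\ref{3.12 c} exactly.

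There is one gap. The pointwise lower bound $|K_c(z,x)|\gtrsim q^{c|z\wedge x|}$ in the branching case is not secured by the ratio argument you give: with the factors $(1-q^{k-|z|})(1-q^{k-|x|})$ as small as $(1-q^{-1})^2$ on the leading term and equal to $1$ on the subleading ones, the comparison constant becomes $q^2/\big((q^c-1)(q-1)\big)$, which exceeds $1$ for $c$ close to $1$. More importantly, your proposed sidestep of restricting to $z\in T_x$ only yields $\sum_{z\in T_x}q^{-(\alpha+a)|z|}|K_c(z,x)|\simeq q^{(c-\alpha-a)|x|}$, which is enough to force $\alpha+a>1$ and $c\le a+b$, but gives nothing in the regimes $c\le\alpha+a$ where the necessity of $\alpha\le b$ (or $\alpha<b$) must come from lower strata $k<|x|$. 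The fix is immediate: restrict instead to $z\in[o,x]$, where every $\Gamma$ in~\eqref{kernelconfluente} is nonnegative and $|K_c(v_t,x)|\simeq q^{ct}$ is unambiguous, giving directly $\sum_{t=0}^{|x|}q^{(c-\alpha-a)t}$ as a lower bound on the full sum. This is precisely what the paper does in Lemma~\ref{lem:nonboundp=1}; with that one change your argument is complete.
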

	We state a corollary which is simply a reformulation of the previous theorems when $c=a+b$. 
	
	\begin{cor}\label{cor3.13}
		Let $ 1\leq p<\infty$ and $\alpha>1$. If $a,b\in\R$ are such that $a+b>1$, then the following conditions are equivalent:
		\begin{enumerate}[label=(\roman*)]
			\item the operator $S_{a,b,a+b}$ is bounded on $L^p_\alpha$;
			\item the operator $T_{a,b,a+b}$ is bounded on $L^p_\alpha$;
			\item the parameters satisfy
			\[-pa<\alpha-1<p\left(b-1\right).\]
		\end{enumerate}
	\end{cor}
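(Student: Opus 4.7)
The plan is to derive this corollary by direct application of Theorems~\ref{3.11} and~\ref{3.12}, specialized to the case $c=a+b$. The standing hypotheses of those two theorems require $c>1$, which is supplied here by the assumption $a+b>1$; hence both theorems are applicable with the choice $c=a+b$, and the equivalences among (i), (ii) and their respective parameter conditions may be invoked without modification.

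For $1<p<\infty$, I would appeal to Theorem~\ref{3.11}. Its condition (iii) reads $c\leq a+b$ together with $-pa<\alpha-1<p(b-1)$. Since $c=a+b$, the inequality $c\leq a+b$ is automatic, so (iii) of Theorem~\ref{3.11} collapses precisely to $-pa<\alpha-1<p(b-1)$, which is the condition stated in (iii) of the corollary.

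For $p=1$, I would appeal to Theorem~\ref{3.12}. Its condition (iii) is a dichotomy: either $c=a+b$ and $-a<\alpha-1<b-1$, or $c<a+b$ and $-a<\alpha-1\leq b-1$. With $c=a+b$ the second alternative is ruled out outright, so only the first can occur, and it reads exactly $-a<\alpha-1<b-1$, which coincides with the condition in (iii) of the corollary specialized to $p=1$.

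No substantive obstacle is anticipated: the corollary is essentially a bookkeeping restatement of the two theorems in the particular regime $c=a+b$. The only mildly delicate point, worth flagging in the write-up, is that setting $c=a+b$ simultaneously trivializes the first inequality in Theorem~\ref{3.11}(iii) for $p>1$ and eliminates one branch of the dichotomy in Theorem~\ref{3.12}(iii) for $p=1$, allowing the two cases to be packaged into the single statement $-pa<\alpha-1<p(b-1)$ uniformly in $p\in[1,\infty)$.
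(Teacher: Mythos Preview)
Your proposal is correct and matches the paper's approach exactly: the paper presents this corollary without proof, stating only that it ``is simply a reformulation of the previous theorems when $c=a+b$,'' and your argument spells out precisely that reformulation by specializing Theorem~\ref{3.11} for $p>1$ and Theorem~\ref{3.12} for $p=1$.
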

	
	Let $\beta>1$. Since $\mathcal{A}_\beta^2$ is a closed subspace of $L^2_\beta$, there exists an orthogonal projection $P_\beta\colon L^2_\beta\to \mathcal{A}_\beta^2$. Observe that by the reproducing property of $K_{\beta,z}=K_\beta(z,\cdot)$, $z\in X$, we can write the projection $P_\beta f$ of $f\in L_\beta^2$ as follows
	\begin{align*}
		P_\beta f(z)&=\langle P_\beta f,K_{\beta,z}\rangle_{\mathcal{A}_\beta^2}=\langle  f,P_\beta K_{\beta,z}\rangle_{L_\beta^2}=\langle f,K_{\beta,z}\rangle_{L_\beta^2},
	\end{align*}
	where we used the orthogonality of $P_\beta$. Hence we can rewrite $P_\beta$ as the integral operator on $L^2_\beta$ associated to the reproducing kernel $K_\beta$, that is
	\begin{equation}\label{proj}
		P_\beta f(z)=\sum_{x\in X}K_\beta(z,x)f(x)q^{-\beta |x|},\qquad f\in L^2_\beta,\,z\in X.
	\end{equation}
	Since $\mu_\beta$ is finite, $L^p_\beta\subseteq L^2_\beta$ whenever $p\geq 2$. It is then natural to investigate whether the restriction of $P_\beta$ to $L^p_\beta$ is bounded. Furthermore, when $1\leq p<2$ one has $L^2_\beta\subsetneq L^p_\beta$ and we shall study whether $P_\beta$ admits a bounded extension to $L^p_\beta$. A more general question that we want to answer is whether the integral operator $\cK_\beta^\alpha$, $\alpha>1$, with kernel $K_\beta(z,x)q^{(\alpha-\beta)|x|}$
	with respect to the measure $\mu_\alpha$, that is
	\begin{equation}\label{projalphabeta}
		\cK_\beta^\alpha f(z)=\sum_{x\in X}K_\beta(z,x)f(x)q^{(\alpha-\beta) |x|}q^{-\alpha |x|},\qquad f\in L_\alpha^p\cap L^2_\beta,\,z\in X,
	\end{equation}
	extends to a bounded operator from $L_\alpha^p$ to $\cA_\alpha^p$.
	The following result answers the above questions.

	\begin{thm}\label{theobound}
		Let $1\leq p<\infty$, $\alpha,\beta>1$. The operator $\cK_\beta^\alpha$
		extends to a bounded operator from $L^p_\alpha$ to $\mathcal{A}_\alpha^p$ if and only if
		\begin{equation}\label{thmcond}
			p\left(\beta-1\right)>\alpha-1.
		\end{equation}
		In particular, $P_\alpha$ is bounded on $L_\alpha^p$ if and only if $p>1$.
	\end{thm}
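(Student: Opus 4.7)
The plan is to recognize $\cK_\beta^\alpha$ as the Toeplitz-type operator $T_{0,\beta,\beta}$ and then read off the conclusion from Corollary~\ref{cor3.13}. First I would combine the powers of $q$ in \eqref{projalphabeta}, observing that $q^{(\alpha-\beta)|x|}q^{-\alpha|x|}=q^{-\beta|x|}$, so that on every function $f$ for which the series converges absolutely (in particular on finitely supported functions, which are dense in $L^p_\alpha$ since $\mu_\alpha$ is finite),
\[\cK_\beta^\alpha f(z) = \sum_{x\in X} K_\beta(z,x)\, f(x)\, q^{-\beta|x|} = T_{0,\beta,\beta} f(z).\]
Note that the parameter $\alpha$ enters only through the measure $\mu_\alpha$ defining the $L^p$-norm, and not through the kernel or the summation.

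For the sufficiency direction I would apply Corollary~\ref{cor3.13} with $a=0$ and $b=\beta$, so that $a+b=\beta>1$. The corollary's condition $-pa<\alpha-1<p(b-1)$ becomes $0<\alpha-1<p(\beta-1)$: the left inequality is forced by $\alpha>1$ and the right one is exactly \eqref{thmcond}. The corollary then yields the boundedness of both $S_{0,\beta,\beta}$ and $T_{0,\beta,\beta}$ on $L^p_\alpha$, and a density argument produces a bounded extension of $\cK_\beta^\alpha$ to all of $L^p_\alpha$. The absolute convergence of the defining series (supplied by $S_{0,\beta,\beta}$) allows one to interchange the summation over $x$ with the averaging over the neighbors of $z$; since each $K_\beta(\cdot,x)$ is harmonic as an element of $\cA^2_\beta$, this interchange delivers \eqref{harmonicity} for $\cK_\beta^\alpha f$, so that the extension actually takes values in $\cA^p_\alpha$.

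For necessity, if $\cK_\beta^\alpha$ extends to a bounded operator $L^p_\alpha\to\cA^p_\alpha\subseteq L^p_\alpha$, then $T_{0,\beta,\beta}$ is bounded on $L^p_\alpha$ and Corollary~\ref{cor3.13} forces $\alpha-1<p(\beta-1)$. Finally, the statement about $P_\alpha$ follows by taking $\beta=\alpha$: the reproducing formula \eqref{proj} identifies $P_\alpha$ with $\cK_\alpha^\alpha$ on $L^2_\alpha$, and \eqref{thmcond} specializes to $p(\alpha-1)>\alpha-1$, which is equivalent to $p>1$ because $\alpha>1$.

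The only non-routine point in this argument is verifying the harmonicity of $\cK_\beta^\alpha f$, but this reduces to a Fubini-style interchange fully justified by the boundedness of $S_{0,\beta,\beta}$; the genuine analytical substance of the theorem has already been absorbed into Corollary~\ref{cor3.13}, which in turn rests on Theorems~\ref{3.11} and~\ref{3.12}.
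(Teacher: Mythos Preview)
Your proof is correct and follows exactly the paper's approach: identify $\cK_\beta^\alpha$ with $T_{0,\beta,\beta}$ and invoke Corollary~\ref{cor3.13}. The paper's proof is a two-line version of yours; you simply spell out the density argument, the harmonicity of the image, and the specialization $\beta=\alpha$, all of which the paper leaves implicit.
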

	\begin{proof}
		It is sufficient to observe that from~\eqref{projalphabeta}, $\cK_\beta^\alpha=T_{0,\beta,\beta}$ on $L_\alpha^p\cap L_\beta^2$. Hence, the result follows from Corollary~\ref{cor3.13}.
	\end{proof}

	\begin{oss}
		It is worthwhile observing that the unboundedness of $P_\alpha$ on $L_\alpha^1$ may be seen directly with the following example. We make use of Lemma~\ref{lem:nonboundp=1} that will be proved in the next subsection.\\
		For every $n\in\mathbb{N}$, we fix a vertex $v_n$ in $\sph{o}{n}$, and define
		\[f_n(x)=\mathbbm{1}_{\{v_n\}}(x)q^{\alpha|x|},\qquad x\in X.\]
		Clearly, $\|f_n\|_{L_\alpha^1}=1$ and $f_n\in L^2_\alpha$. Hence, $P_\alpha f_n(z)=K_\alpha(z,v_n)$ and  by Lemma~\ref{lem:nonboundp=1}
		\[\|P_\alpha f_n\|_{L^1_\alpha}= \sum_{z\in X}|K_\alpha(z,v_n)|q^{-\alpha|z|}\gtrsim |v_n|=n.\]
		This shows that $P_\alpha$ does not admit a bounded extension to $L_\alpha^1$.
	\end{oss}

	As a direct application of Theorem~\ref{theobound}, we deduce the following result on the dual of Bergman spaces.
	\begin{cor}
		Let $1<p<\infty$ and $\alpha>1$. Then 
		\[(\mathcal{A}_\alpha^p)^*= \mathcal{A}_\alpha^{p'},\]
		where $1<p'<\infty$ is such that $\frac 1p+\frac 1{p'}=1$, with equivalent norms under the pairing 
		\begin{equation}\label{pairing}
			\langle f,g \rangle_{\mathcal{A}_\alpha^p\times\mathcal{A}_\alpha^{p'}}=\sum_{z\in X}f(z)g(z)q^{-\alpha|z|}\qquad f\in \mathcal{A}_\alpha^p, \,g\in \mathcal{A}_\alpha^{p'}.
		\end{equation}
	\end{cor}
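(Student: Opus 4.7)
My plan follows the classical template for Bergman space duality: realize $\mathcal A_\alpha^{p'}$ inside $(\mathcal A_\alpha^p)^*$ via H\"older's inequality, then obtain surjectivity by combining Hahn--Banach with the boundedness of the Bergman projector supplied by Theorem~\ref{theobound}.

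For $g\in\mathcal A_\alpha^{p'}$, H\"older's inequality applied to the pairing~\eqref{pairing} gives $|\langle f,g\rangle_{\mathcal A_\alpha^p\times\mathcal A_\alpha^{p'}}|\leq \|f\|_{\mathcal A_\alpha^p}\|g\|_{\mathcal A_\alpha^{p'}}$, so the assignment $g\mapsto \Lambda_g:=\langle\,\cdot\,,g\rangle_{\mathcal A_\alpha^p\times\mathcal A_\alpha^{p'}}$ is a bounded linear map from $\mathcal A_\alpha^{p'}$ into $(\mathcal A_\alpha^p)^*$. Since $K_{\alpha,z}$ is bounded, it belongs to every $\mathcal A_\alpha^p$, and testing $\Lambda_g$ against $K_{\alpha,z}$ together with the reproducing property yields $\Lambda_g(K_{\alpha,z})=g(z)$, so the map is also injective.

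For the converse, given $\Lambda\in(\mathcal A_\alpha^p)^*$, I would use Hahn--Banach to extend $\Lambda$ to $\tilde\Lambda\in(L_\alpha^p)^*$ with the same norm, and then use the standard duality $(L_\alpha^p)^*\cong L_\alpha^{p'}$ to represent it as $\tilde\Lambda(f)=\sum_x f(x)h(x)q^{-\alpha|x|}$ for some $h\in L_\alpha^{p'}$ with $\|h\|_{L_\alpha^{p'}}=\|\Lambda\|$. Since $p<\infty$ forces $p'>1$, Theorem~\ref{theobound} (with $\beta=\alpha$ and $p$ replaced by $p'$) ensures that $g:=P_\alpha h\in\mathcal A_\alpha^{p'}$ with $\|g\|_{\mathcal A_\alpha^{p'}}\lesssim\|\Lambda\|$, which will give the converse norm bound.

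The core computation is then to transfer $P_\alpha$ from $h$ onto $f$ using the symmetry $K_\alpha(z,x)=K_\alpha(x,z)$ stated in the remark after Theorem~\ref{kernelformula}:
\begin{align*}
\Lambda(f)
&= \sum_x f(x)h(x)q^{-\alpha|x|}
= \sum_x \Bigl(\sum_y K_\alpha(x,y)f(y)q^{-\alpha|y|}\Bigr)h(x)q^{-\alpha|x|}\\
&= \sum_y f(y)q^{-\alpha|y|}\sum_x K_\alpha(y,x)h(x)q^{-\alpha|x|}
= \langle f,g\rangle_{\mathcal A_\alpha^p\times\mathcal A_\alpha^{p'}},
\end{align*}
for every $f\in\mathcal A_\alpha^p$. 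Absolute convergence justifying the Fubini swap is immediate: for fixed $y$, Theorem~\ref{kernelformula} shows that the sum defining $K_\alpha(y,\cdot)$ ranges over $v\in[o,y\wedge\,\cdot\,]$ and contains at most $|y|+1$ non-vanishing terms, each controlled by a constant depending only on $|y|$, so $K_\alpha(y,\cdot)$ is bounded and H\"older applies. The main obstacle---and the only delicate point---is the reproducing identity $f=P_\alpha f$ for a generic $f\in\mathcal A_\alpha^p$: when $p\geq 2$ this is immediate from the $L^2$-theory, because finiteness of $\mu_\alpha$ gives $\mathcal A_\alpha^p\subseteq\mathcal A_\alpha^2$; in the range $1\leq p<2$ one must establish it either by approximating $f$ in $\mathcal A_\alpha^p$-norm by the bounded harmonic truncations $(f|_{B(o,n)})^H_n$, which lie in $\mathcal A_\alpha^2$, or by a direct verification using the recursive formula of Proposition~\ref{ricorsiva} and the mean value property~\eqref{harmoball}. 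The norm equivalence then follows from the two bounds obtained along the way, and uniqueness of $g$ is guaranteed by the injectivity established in the first step.
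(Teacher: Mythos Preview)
Your approach is essentially the same as the paper's: H\"older for the embedding $\mathcal A_\alpha^{p'}\hookrightarrow(\mathcal A_\alpha^p)^*$, then Hahn--Banach plus the $L^{p'}_\alpha$-boundedness of $P_\alpha$ from Theorem~\ref{theobound} for surjectivity. The paper compresses your Fubini computation into the line ``by the orthogonality of $P_\alpha$, $\Phi(f)=\langle P_\alpha f,P_\alpha h\rangle=\langle f,P_\alpha h\rangle$'', so the reproducing identity $P_\alpha f=f$ for $f\in\mathcal A_\alpha^p$ that you correctly flag as the delicate point is used there implicitly as well; your proposed justification via the recursive formula (noting that the proof of Lemma~\ref{lemma} only needs harmonicity and absolute convergence of the pairing, both of which hold since $K_{\alpha,z}$ is bounded) is the right way to close that gap, and your added injectivity argument is a welcome detail the paper omits.
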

	\begin{proof}
		Let $g\in\cA^{p'}_\alpha$. By H\"older inequality we have that \[|\langle f,g \rangle_{\mathcal{A}_\alpha^p\times\mathcal{A}_\alpha^{p'}}|\leq\|g\|_{\mathcal{A}_\alpha^{p'}}\|f\|_{\mathcal{A}_\alpha^p},\] 
		for every $f\in \mathcal{A}_\alpha^p$ so that $g$ defines a functional in $(\mathcal{A}_\alpha^p)^*$.
		Conversely, for $\Phi\in(\mathcal{A}_\alpha^p)^*$, by the Hahn-Banach theorem, there exists $\tilde{\Phi}\in(L_\alpha^p)^*$ such that $\tilde{\Phi}|_{\cA_\alpha^p}=\Phi$ and $\|\Phi\|_{(\mathcal{A}_\alpha^p)^*}\geq\|\tilde{\Phi}\|_{(L_\alpha^p)^*}$. Then, there exists $h\in L^{p'}_\alpha$ such that 
		\[\Phi(f)=\tilde{\Phi}(f)= \langle f,h\rangle_{L_\alpha^p\times L_\alpha^{p'}},\] 
		for every $f\in \mathcal{A}_\alpha^p$. By the orthogonality of $P_\alpha$ and Theorem~\ref{theobound},
		\[\Phi(f)= \langle P_\alpha f,P_\alpha h\rangle_{\mathcal{A}_\alpha^p\times\mathcal{A}_\alpha^{p'}}=\langle f,P_\alpha h\rangle_{\mathcal{A}_\alpha^p\times\mathcal{A}_\alpha^{p'}}.\]
		Hence $\Phi$ corresponds to $P_\alpha h\in\cA_\alpha^2$ under the pairing~\eqref{pairing}.
	\end{proof}
	

	\subsection{Proofs of Theorems~\ref{3.11} and \ref{3.12}}\label{secproofs}

	This subsection is devoted to the proofs of Theorems~\ref{3.11} and \ref{3.12},  splitting them up in various steps. In both statements it is obvious that \ref{3.11 a} implies \ref{3.11 b}. For the rest of the section $\alpha,a,b,c$ denote real parameters with $c>1$.
	\subsubsection{Proof that \ref{3.11 b} implies \ref{3.11 c}}
	In this subsection we suppose that the operator $T_{a,b,c}$ is bounded on $L^p_\alpha$ and we deduce necessary conditions on the parameters $a,b,c, \alpha$. 
	
	\begin{prop}\label{-pa<}
		Let $1\leq p<\infty$. If $T_{a,b,c}f\in L^p_\alpha$ for every $f\in L^p_\alpha$, then $-pa<\alpha-1$.
	\end{prop}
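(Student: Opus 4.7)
The plan is to deduce the necessary condition by testing the hypothesis on the Dirac mass at the origin, $f = \mathbbm{1}_{\{o\}}$. This function is clearly in $L^p_\alpha$ since $\|f\|_{L^p_\alpha}^p = q^{-\alpha|o|} = 1$, so by assumption $T_{a,b,c}f \in L^p_\alpha$.

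The first key step is to simplify $T_{a,b,c}f(z)$, which reduces to $q^{-a|z|}K_c(z,o)$. I claim $K_c(z,o) = 1/B_c$ for every $z \in X$. By the explicit formula \eqref{kernel}, it suffices to show that $\Gamma(v,z,o) = 0$ for all $v,z$. From the definition of $\Gamma$, a nonzero value requires $\{z,o\}\subseteq T_v\setminus\{v\}$, and in particular $o\in T_v$; but $o \in T_v$ forces $[o,v]\subseteq [o,o]=\{o\}$, hence $v=o$, and then $o\notin T_o\setminus\{o\}$. So $\Gamma(v,z,o)$ vanishes identically, and $K_c(z,o)=1/B_c$ as claimed.

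Consequently $T_{a,b,c}f(z) = q^{-a|z|}/B_c$, and using the sphere cardinality formula \eqref{cardsfera},
\begin{equation*}
\|T_{a,b,c}f\|_{L^p_\alpha}^p = \frac{1}{B_c^p}\sum_{z\in X}q^{-(pa+\alpha)|z|} = \frac{1}{B_c^p}\left(1 + \frac{q+1}{q}\sum_{n=1}^{+\infty}q^{(1-pa-\alpha)n}\right).
\end{equation*}
The geometric series converges if and only if $1-pa-\alpha<0$, i.e., $-pa<\alpha-1$. Since by hypothesis the left-hand side is finite, this inequality must hold, yielding the claim. The argument is essentially a one-line test-function computation; the only delicate point is verifying that $K_c(z,o)$ is a constant, which is immediate once the support of $\Gamma(\cdot,\cdot,o)$ is understood, so I do not anticipate any real obstacle.
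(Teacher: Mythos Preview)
Your proof is correct. Both you and the paper argue by choosing a test function $f\in L^p_\alpha$ for which $T_{a,b,c}f$ can be computed explicitly and turns out to be a constant multiple of $z\mapsto q^{-a|z|}$; finiteness of $\|T_{a,b,c}f\|_{L^p_\alpha}$ then forces $-pa<\alpha-1$.

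The difference is in the choice of test function. The paper takes $f(x)=q^{-R|x|}$ with $R>\max\{(1-\alpha)/p,\,1-b\}$, then sums over spheres and uses the harmonicity of $K_c(z,\cdot)$ together with~\eqref{harmoball} to collapse the sum to $\frac{B_{b+R}}{B_c}q^{-a|z|}$; this requires the auxiliary condition $R>1-b$ so that the series defining $B_{b+R}$ converges. Your choice $f=\mathbbm{1}_{\{o\}}$ is more economical: the sum defining $T_{a,b,c}f(z)$ reduces to a single term, and you only need to know $K_c(z,o)=1/B_c$, which you extract from~\eqref{kernel} via the observation that $\Gamma(v,z,o)\equiv 0$. (Equivalently, one could cite Proposition~\ref{ricorsiva}, which gives $K_o\equiv 1/B_c$ directly, together with the symmetry $K_c(z,o)=K_c(o,z)$.) Your route is slightly shorter and avoids the extra parameter $R$; the paper's route has the minor advantage of not invoking the explicit kernel formula, relying instead only on harmonicity.
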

	\begin{proof}
		Consider, for $x\in X$, $f(x)=q^{-R|x|}$ with $R\in\R$ such that 
		\[R>\max\left\{\frac{1-\alpha}{p},1-b\right\}. \]
		Since $Rp>1-\alpha$ we have that $f\in L^p_\alpha$ and for every $z\in X$  
		\begin{align*}
			T_{a,b,c}f(z)&=q^{-a|z|}\sum_{x\in X}K_c(z,x)q^{-(b+R)|x|}\\
			&=q^{-a|z|}\sum_{n=0}^{+\infty}q^{-(b+R)n}\sum_{|x|=n}K_c(z,x)\\
			&=q^{-a|z|}\sum_{n=0}^{+\infty}q^{-(b+R)n}\#\sph{o}{n}K_c(z,o)
		\end{align*}
		by~\eqref{harmoball} applied to the harmonic function $K_c(z,\,\cdot\,)$. Hence, since $R>1-b$,
		\[ T_{a,b,c}f(z)=q^{-a|z|}\frac{1}{B_{c}}\left[1+\frac{q+1}{q}\sum_{n=1}^{+\infty}q^{(-b-R+1)n}\right]=\frac{B_{b+R}}{B_c}q^{-a|z|},\qquad z\in X.\]
		Now observe that $T_{a,b,c}f\in L^p_\alpha$ implies 
		\[\sum_{z\in X}q^{-(ap+\alpha)|z|}=1+\frac{q+1}{q}\sum_{n=1}^{+\infty}q^{(1-ap-\alpha)n}<+\infty,\]
		which holds if and only if $-pa<\alpha-1$, as required.
	\end{proof}
	From now on we write, for  $1\leq p<\infty$
	\[\|e_{v,j}\|_p=\left(  \sum_{y\in s(v)}|e_{v,j}(y)|^p \right)^{1/p},\qquad v\in X,\,j\in I_v.\]
	\begin{prop}\label{a+b>c}
		Let $ 1\leq p<\infty$. If $T_{a,b,c}$ is bounded on $L^p_\alpha$, then $a+b\geq c$.
	\end{prop}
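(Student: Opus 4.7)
The strategy is to test the boundedness inequality $\|T_{a,b,c}f\|_{L^p_\alpha}\le C\|f\|_{L^p_\alpha}$ on the indicator functions $f_n=\mathbbm{1}_{\{v_n\}}$, where $v_n$ is any fixed vertex of norm $n$, and to track the resulting exponential growth in $n$. One immediately computes $\|f_n\|_{L^p_\alpha}^p=q^{-\alpha n}$ and
\[
T_{a,b,c}f_n(z)=q^{-a|z|}K_c(z,v_n)q^{-bn},
\]
so the whole task reduces to a good lower bound on $|K_c(z,v_n)|$ for $z$ in the sector $\sect{v_n}$.

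The first key step is therefore to show that $K_c(z,v_n)\gtrsim q^{cn}$ uniformly for $z\in\sect{v_n}$ and all $n$ large. Let $[o,v_n]=\{v_0,\ldots,v_n\}$. Using the closed form \eqref{kernelconfluente} and the identity $b_{c,t}=q^{-ct}b_{c,0}$ from \eqref{Bt}, for $z\in\sect{v_n}$ with $|z|=m\ge n$ only the indices $t\in\{0,\ldots,n-1\}$ contribute, and for each such $t$ both $z$ and $v_n$ lie in the same successor subtree of $v_t$ (namely the one rooted at $v_{t+1}$). Thus $\Gamma(v_t,z,v_n)>0$ for every $t$, and every summand in \eqref{kernelconfluente} is positive; isolating the $t=n-1$ term gives a contribution of the form $\asymp q^{c(n-1)}\cdot\frac{q-1}{q}\cdot(1-q^{-1-(m-n)})(1-q^{-1})\gtrsim q^{cn}$, independently of $m\ge n$. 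The positivity of the remaining terms lets us conclude $K_c(z,v_n)\gtrsim q^{cn}$.

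Next, I would apply Proposition \ref{-pa<} (whose hypothesis is in force since we are assuming $T_{a,b,c}$ bounded on $L^p_\alpha$) to secure $ap+\alpha>1$. Using that the sphere $\sph{o}{m}$ intersects $\sect{v_n}$ in exactly $q^{m-n}$ vertices for $m\ge n$, the previous bound yields
\[
\|T_{a,b,c}f_n\|_{L^p_\alpha}^p\;\ge\sum_{z\in\sect{v_n}}|T_{a,b,c}f_n(z)|^pq^{-\alpha|z|}\;\gtrsim\;q^{(c-b)pn}\sum_{m=n}^{+\infty}q^{m-n}q^{-(ap+\alpha)m}\;\simeq\;q^{((c-a-b)p-\alpha)n},
\]
the last $\simeq$ coming from summing a convergent geometric series of ratio $q^{1-ap-\alpha}<1$.

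Finally, comparing with $\|T_{a,b,c}f_n\|_{L^p_\alpha}^p\le C^p q^{-\alpha n}$, one obtains $q^{(c-a-b)pn}\lesssim 1$ uniformly in $n$, which forces $c-a-b\le 0$, i.e. $a+b\ge c$. The only real obstacle is the uniform lower bound on $K_c(z,v_n)$ in the second paragraph; once that is in hand, the rest is just bookkeeping of the exponents.
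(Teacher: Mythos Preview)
Your proof is correct and takes a genuinely different route from the paper's.

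The paper tests boundedness not on indicator functions but on the weighted basis functions $g_{v,j}(x)=f_{v,j}(x)q^{-R|x|}$ with $R$ large. Because the family $\cB$ is orthogonal in every $\cA^2(\sigma)$, the action of $T_{a,b,c}$ on $g_{v,j}$ can be computed exactly via the decomposition \eqref{decomp} of $K_{c,z}$: one gets $T_{a,b,c}g_{v,j}(z)=q^{-a|z|}\dfrac{b_{b+R,|v|}}{b_{c,|v|}}\,\overline{f_{v,j}(z)}$, and then both $\|T_{a,b,c}g_{v,j}\|_{L^p_\alpha}^p$ and $\|g_{v,j}\|_{L^p_\alpha}^p$ reduce to the same explicit series $C(\,\cdot\,,p)$, yielding the clean ratio $\simeq q^{(c-a-b)p|v|}$. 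Your approach instead extracts a pointwise lower bound $K_c(z,v_n)\gtrsim q^{cn}$ on the sector $\sect{v_n}$ directly from \eqref{kernelconfluente}, by observing that for $z\in\sect{v_n}$ every summand is nonnegative (since $z$ and $v_n$ share the successor subtree at each $v_t$, $t<n$) and the top term already has the right size. This is more elementary---it avoids the basis machinery and the auxiliary parameter $R$ entirely---at the cost of giving only a one-sided estimate rather than the exact expression the paper obtains. Both arguments rely on Proposition~\ref{-pa<} to make the relevant geometric series converge, and both arrive at the same exponential ratio $q^{(c-a-b)pn}$.
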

	\begin{proof}
		Fix $R\in\R$ such that  \[R>\max\left\{\frac{1-\alpha}{p},c-b \right\}.\]
		For every $v\in X\setminus\{o\}$ and $j\in I_v$, we define
		$g_{v,j}(x)= f_{v,j}(x)q^{-R|x|}$, $x\in X$, where $ f_{v,j}\in \cB$ are defined in \eqref{fvj}. Since $R>\frac{1-\alpha}{p}$, we have that $g_{v,j}\in L^p_\alpha$. Thus,
		\begin{align*}
			T_{a,b,c}g_{v,j}(z)&=q^{-a|z|}\sum_{x\in X}K_c(z,x) f_{v,j}(x)q^{-(b+R)|x|}\\
			&=q^{-a|z|}\langle  f_{v,j},K_{c,z}\rangle_{L_{b+R}^2}
		\end{align*}
		since $K_{c,z}\in L^2_c\subseteq L^2_{b+R}$ because $R>c-b$. Now we use the decomposition~\eqref{decomp} of $K_{c,z}$ on the orthonormal basis of $\mathcal{A}_c^2$ and obtain
		\begin{align*}
			\langle K_{c,z}, f_{v,j}\rangle_{L_{b+R}^2}&=\langle \frac{1}{B_{c}}+\sum_{u\in X}\sum_{k\in I_u}\frac{ \overline{f_{u,k}(z)} f_{u,k}}{b_{c,|u|}}, f_{v,j}\rangle_{L_{b+R}^2}
			\\
			&=\frac{\overline{ f_{v,j}(z)}}{b_{c,|v|}}\langle  f_{v,j}, f_{v,j}\rangle_{L_{b+R}^2}\\
			&=\frac{b_{b+R,|v|}}{b_{c,|v|}} \overline{ f_{v,j}(z)},
		\end{align*}
		where we use the orthogonality of $ \cB$ and~\eqref{squarenorm}. The norm of $T_{a,b,c}g_{v,j}$ in $L_\alpha^p$ is
		\begin{align*}
			\|T_{a,b,c}g_{v,j}\|_{L_\alpha^p}^p&=\left(\frac{b_{b+R,|v|}}{b_{c,|v|}}\right)^p\sum_{z\in X}| f_{v,j}(z)|^pq^{-(ap+\alpha)|z|}\\
			&=\left(\frac{b_{b+R,|v|}}{b_{c,|v|}}\right)^p\sum_{n=0}^{+\infty}q^{-(ap+\alpha)n}\sum_{|z|=n}| f_{v,j}(z)|^p.
		\end{align*}
		Since ${\rm supp}( f_{v,j})\subseteq\sect{v}\setminus\{v\}$, the sum of $| f_{v,j}|^p$ on the sphere $\sph{o}{n}$ vanishes for every $n\leq |v|$. 
		If $n>|v|$, then the sum is on $\sph{o}{n}\cap\sect{v}$ and if $z\in \sect{v}$ is such that $|z|=n$ then $p^{|z|-|v|-1}(z)$ is the unique vertex in $s(v)$ such that $z$ lies in its sector. Hence by~\eqref{fvj} we have
		\begin{align*}
			\sum_{|z|=n}| f_{v,j}(z)|^p&=\sum_{\substack{|z|=n\\z\in \sect{v}}}|e_{v,j}(p^{|z|-|v|-1}(z))|^pa_{n-|p^{|z|-|v|-1}(z)|}^p\\
			&=a_{n-|v|-1}^p\sum_{\substack{|z|=n\\z\in \sect{v}}}|e_{v,j}(p^{|z|-|v|-1}(z))|^p\\
			&=a_{n-|v|-1}^pq^{n-|v|-1}\sum_{y\in s(v)}|e_{v,j}(y)|^p\\
			&=a_{n-|v|-1}^pq^{n-|v|-1}\|e_{v,j}\|^p_p.
		\end{align*}
		For simplicity, for every $s\in\R$ and $1\leq p<\infty$, we put
		\[C(s,p):=\sum_{m=1}^{+\infty}q^{(1-s)m-1}a_{m-1}^p,\]
		which is finite whenever $s>1$.
		The above computation yields
		\begin{align*}
			\|T_{a,b,c}g_{v,j}\|_{L_\alpha^p}^p&=\left(\frac{b_{b+R,|v|}}{b_{c,|v|}}\right)^p\sum_{n=|v|+1}^{+\infty}q^{-(ap+\alpha)n}a_{n-|v|-1}^pq^{n-|v|-1}\|e_{v,j}\|^p_p\\
			&=\|e_{v,j}\|^p_p\left(\frac{b_{b+R,|v|}}{b_{c,|v|}}\right)^p\sum_{m=1}^{+\infty}q^{-(ap+\alpha)(m+|v|)}a_{m-1}^pq^{m-1}\\
			&=\|e_{v,j}\|_{p}^p\left(\frac{b_{b+R,|v|}}{b_{c,|v|}}\right)^pq^{-(ap+\alpha)|v|}\sum_{m=1}^{+\infty}q^{(1-(ap+\alpha))m-1}a_{m-1}^p\\
			&=\|e_{v,j}\|_{p}^p\,C(ap+\alpha,p)\left(\frac{b_{b+R,|v|}}{b_{c,|v|}}\right)^pq^{-(ap+\alpha)|v|},
		\end{align*}
		where $C(ap+\alpha,p)$ converges because $ap+\alpha>1$, by Proposition~\ref{-pa<}. Furthermore,
		\begin{align*}
			\|g_{v,j}\|_{L_\alpha^p}^p&=\sum_{x\in X}| f_{v,j}(x)|^pq^{-(Rp+\alpha)|x|}\\
			&=\sum_{n=0}^{+\infty}q^{-(Rp+\alpha)n}\sum_{|x|=n}| f_{v,j}(x)|^p\\
			&=\sum_{n=|v|+1}^{+\infty}q^{-(Rp+\alpha)n}a_{n-|v|-1}^pq^{n-|v|-1}\|e_{v,j}\|_{p}^p\\
			&=\|e_{v,j}\|_{p}^pq^{-(Rp+\alpha)|v|}\sum_{m=1}^{+\infty}q^{(1-(Rp+\alpha))m-1}a_{m-1}^p\\
			&=\|e_{v,j}\|_{p}^p\,C(Rp+\alpha,p)q^{-(Rp+\alpha)|v|},
		\end{align*}
		where $C(Rp+\alpha,p)\to 1$ when $R\to +\infty$. From the boundedness of $T_{a,b,c}$ and by~\eqref{Bt}, it follows that for every $v\in X\setminus\{o\}$:
		\begin{align*}
			\frac{\|T_{a,b,c}g_{v,j}\|_{L_\alpha^p}^p}{\|g_{v,j}\|_{L_\alpha^p}^p}&\simeq \left(\frac{b_{b+R,|v|}}{b_{c,|v|}}\right)^pq^{-(ap+\alpha-Rp-\alpha)|v|}\\
			&\simeq q^{-p(R+b-c)|v|}q^{-(ap-Rp)|v|}\\
			&=q^{(c-a-b)p|v|},
		\end{align*}
		which is bounded if and only if $c\leq a+b$.
	\end{proof}

	\begin{prop}\label{lemp>1}
		Let $1<p<\infty$. If $T_{a,b,c}$ is bounded on $L^p_\alpha$, then $\alpha-1<p(b-1)$.
	\end{prop}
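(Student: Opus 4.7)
The plan is to establish this upper bound by duality, applying Proposition~\ref{-pa<} to the formal adjoint of $T_{a,b,c}$. With respect to the natural pairing
\[\langle \varphi,\psi\rangle_{\alpha}=\sum_{z\in X}\varphi(z)\overline{\psi(z)}\,q^{-\alpha|z|},\]
which realizes the $L^p_\alpha$--$L^{p'}_\alpha$ duality for $1<p<\infty$, boundedness of $T_{a,b,c}$ on $L^p_\alpha$ is equivalent to boundedness of the formal adjoint $T^*_{a,b,c}$ on $L^{p'}_\alpha$, with the same operator norm.

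The first step is to identify $T^*_{a,b,c}$ explicitly. A swap of summation in
\[\langle T_{a,b,c}f,\,g\rangle_{\alpha}=\sum_{z\in X}q^{-a|z|}\sum_{x\in X}K_c(z,x)f(x)q^{-b|x|}\overline{g(z)}\,q^{-\alpha|z|},\]
legitimate on finitely supported test functions $f,g$ and extended by density, together with the fact that the kernel $K_c$ is real and symmetric (Theorem~\ref{kernelformula} and the subsequent remark), yields
\[T^*_{a,b,c}g(x)=q^{-(b-\alpha)|x|}\sum_{z\in X}K_c(x,z)\,g(z)\,q^{-(a+\alpha)|z|}=T_{b-\alpha,\,a+\alpha,\,c}\,g(x).\]
So $T^*_{a,b,c}=T_{b-\alpha,\,a+\alpha,\,c}$, and by hypothesis this operator is bounded on $L^{p'}_\alpha$.

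The second step is to invoke Proposition~\ref{-pa<} with parameters $(b-\alpha,\,a+\alpha,\,c)$ in place of $(a,b,c)$ and exponent $p'$ in place of $p$. This forces the necessary condition $-p'(b-\alpha)<\alpha-1$, i.e.~$\alpha(p'-1)<p'b-1$. Using the identities $p'-1=1/(p-1)$ and $(p-1)p'=p$, rearranging gives $\alpha<pb-p+1$, which is exactly $\alpha-1<p(b-1)$, as required.

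The only delicate point is justifying the adjoint identity at the level of bounded operators, so that boundedness really does transfer from $T_{a,b,c}$ to $T^*_{a,b,c}$ with the correct parameters. This is routine once one notes that finitely supported functions are dense in every $L^p_\alpha$, $1\leq p<\infty$, and that for such test functions all double sums converge absolutely, so Fubini applies. After that, the argument is purely algebraic and follows the same "test-function plus duality" pattern used for the disk in Section~3.4 of~\cite{zhuoperator}.
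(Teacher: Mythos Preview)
Your proof is correct and follows essentially the same route as the paper: compute the adjoint of $T_{a,b,c}$ with respect to the $\mu_\alpha$-pairing, identify it as $T_{b-\alpha,\,a+\alpha,\,c}$ using the symmetry of $K_c$, and then apply Proposition~\ref{-pa<} with exponent $p'$ to obtain $-p'(b-\alpha)<\alpha-1$, which rearranges to the claim. The paper's argument is identical in structure, only more terse about the adjoint computation and the final algebraic rearrangement.
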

	\begin{proof}
		The boundedness of $T_{a,b,c}$ on $L^p_\alpha$ is equivalent to the boundedness of the adjoint operator $T_{a,b,c}^*$ on $L^{p'}_\alpha$. It is easy to see that  
		\[T_{a,b,c}^* g(x)=q^{-(b-\alpha)|x|}\sum_{z\in X}K_c(x,z)g(z)q^{-(a+\alpha)|z|}=T_{b-\alpha,a+\alpha,c}g(x),\qquad g\in L^{p'}_\alpha.\]
		Hence, the fact that $T_{a,b,c}^*$ is bounded on $L^{p'}_\alpha$ implies, by Proposition~\ref{-pa<}, that $-p'(b-\alpha)<\alpha-1$, that is $\alpha-1<p(b-1)$. \end{proof}
	Propositions \ref{-pa<}, \ref{a+b>c}, \ref{lemp>1} show that~\ref{3.11 b} implies~\ref{3.11 c} in Theorem~\ref{3.11}.
	Now we focus on the case $p=1$, and we prove that (ii) implies (iii) in Theorem~\ref{3.12}.
	
	\begin{lem}\label{lem:nonboundp=1} Let $\alpha>1$. Then:
		\[\sum_{z\in X}|K_\alpha(x,z)|q^{-\alpha|z|}\gtrsim |x|,\qquad x\in X.\]
	\end{lem}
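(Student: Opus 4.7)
The plan is as follows. Write $m = |x|$ and $[o,x] = \{v_0, \ldots, v_m\}$, and partition $X \setminus T_x$ into the disjoint ``off-geodesic branches'' $U_s := T_{v_s} \setminus T_{v_{s+1}}$ for $s = 0, 1, \ldots, m-1$. I will show that for each $s \geq 1$ the contribution of $U_s$ to the sum satisfies
\[
\sum_{z \in U_s} |K_\alpha(x,z)|\, q^{-\alpha|z|} \geq c,
\]
with $c > 0$ independent of $s$ and $m$. Summing over $s = 1, \ldots, m-1$ then gives a lower bound of $c(m-1) \gtrsim |x|$, the case $|x| = 0$ being trivial.

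The pointwise estimate on $U_s$ will rely on the explicit formula \eqref{kernelesplicito} for $K_x$, together with $b_{\alpha,t} = q^{-\alpha t} b_{\alpha,0}$ from \eqref{Bt}. First I would observe that for $z \in U_s \setminus \{v_s\}$ with $|z| = n$, the summand $(\Gamma(v_t, v_{t+1}, \cdot))^H_t(z)$ in \eqref{kernelesplicito} vanishes for $t > s$; for $t < s$ the vertex $z$ lies in the positive sector $T_{v_{t+1}}$ of $v_t$, so the summand equals $a_{n-t-1} \cdot (\#s(v_t)-1)/\#s(v_t)$; and for $t = s$ it lies in a sector of $v_s$ other than $T_{v_{s+1}}$, giving $-a_{n-s-1}/q$ (using $s \geq 1$, so $\#s(v_s) = q$). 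Assembling these via \eqref{eq:Gammaharm} yields the closed expression
\[
K_\alpha(x,z) = \frac{1}{B_\alpha} + \frac{1}{b_{\alpha,0}} \left(\sum_{t=0}^{s-1} q^{\alpha t} a_{m-t-1} a_{n-t-1}\,\frac{\#s(v_t)-1}{\#s(v_t)} - \frac{q^{\alpha s}}{q}\, a_{m-s-1} a_{n-s-1}\right),
\]
which depends on $z$ only through $n = |z|$.

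The central task, and the main obstacle, is to prove that the parenthesized bracket is negative with magnitude $\simeq q^{\alpha s}$, uniformly in the admissible $s, m, n$. In the asymptotic regime $m, n \to \infty$ the $a$-factors tend to $q/(q-1)$, and a direct geometric-series evaluation of $\sum_{t=0}^{s-1} q^{\alpha t}\gamma_t^+ - q^{\alpha s}/q$, with $\gamma_t^+ = (\#s(v_t)-1)/\#s(v_t)$, produces a quantity of the form $C_0 - C_1 q^{\alpha s}$ with $C_1 > 0$, strictly negative for every $s \geq 1$ precisely because $\alpha > 1$ (equivalently, $q^\alpha > q$). The finite-$m$, finite-$n$ corrections will be handled through the uniform bounds $1 \leq a_j \leq q/(q-1)$; the delicate point is verifying that these corrections do not overturn the sign for small $s$, which reduces to checking the monotonicity of the bracket in $s$ and its sign at $s=1$, where it equals $q/(q+1) - q^{\alpha-1} < 0$.

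Granting $|K_\alpha(x,z)| \gtrsim q^{\alpha s}/b_{\alpha,0}$ on $U_s$ for $|z|$ sufficiently large, and using $|U_s \cap S(o,n)| = (q-1) q^{n-s-1}$ for $n > s$, the remaining step is the geometric sum
\[
\sum_{z \in U_s} |K_\alpha(x,z)|\, q^{-\alpha|z|} \gtrsim q^{\alpha s} \sum_{n > s} (q-1) q^{n-s-1}\, q^{-\alpha n} \simeq 1,
\]
independently of $s$ and $m$. Summing these disjoint lower bounds over $s = 1, \ldots, m-1$ then delivers the claim.
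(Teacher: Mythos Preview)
Your decomposition into the off-geodesic branches $U_s$ is a reasonable idea, but the central sign claim---that the bracket is negative with magnitude $\simeq q^{\alpha s}$ uniformly in $s,m,n$---is false. Take $q=2$, $\alpha=1.1$, $m=3$, $s=2$ and let $n\to\infty$; with $a_0=1$, $a_1=\tfrac{3}{2}$, $a_2=\tfrac{7}{4}$ and $a_{n-t-1}\to 2$, the bracket becomes
\[
\tfrac{2}{3}\cdot a_2\cdot 2 + \tfrac{1}{2}\cdot 2^{1.1}\cdot a_1\cdot 2 - \tfrac{1}{2}\cdot 2^{2.2}\cdot a_0\cdot 2
= \tfrac{7}{3} + 3\cdot 2^{0.1} - 2^{2.2} \approx 0.95 > 0.
\]
The mechanism is clear: when $s$ is close to $m-1$ the factor $a_{m-s-1}$ on the negative term collapses to $1$ while the positive terms keep factors near $q/(q-1)$, and for $\alpha$ close to $1$ this flips the sign. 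The value you quote at $s=1$, namely $q/(q+1)-q^{\alpha-1}$, is only the doubly asymptotic limit $m,n\to\infty$ (and only up to a factor $(q/(q-1))^2$), not the actual bracket; the monotonicity-in-$s$ reduction is stated but not proved, and in fact cannot hold in the form you need.

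The paper's argument sidesteps all of this by restricting the sum to the $|x|$ vertices $v_1,\dots,v_{|x|}$ on the geodesic $[o,x]$ itself. For $z=v_t$ one has $z\wedge x=v_t$, and in the kernel formula \eqref{kernelconfluente} every surviving $\Gamma(v_\ell,v_t,x)$ with $\ell<t$ is \emph{positive} (both $v_t$ and $x$ lie in $T_{v_{\ell+1}}$), so $|K_\alpha(x,v_t)|=K_\alpha(x,v_t)$ and no sign analysis is needed. Bounding each factor $(1-q^{\ell-t})(1-q^{\ell-|x|})\geq (1-q^{-1})^2$ and $\Gamma(v_\ell,v_t,x)\geq (q-1)/q$ then gives $K_\alpha(x,v_t)q^{-\alpha t}\gtrsim\sum_{\ell=0}^{t-1}q^{\alpha(\ell-t)}\simeq 1$, and summing over $t$ yields $\gtrsim |x|$. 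Your approach might be salvageable by restricting to, say, $s\leq m/2$, but as written the key inequality does not hold.
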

	\begin{proof}
		The case $x=o$ is trivial.
		For every $x\in X\setminus\{o\}$, we put $\{v_t\}_{t=0}^{|x|}=[o,x]$. Then, by \eqref{kernelconfluente}
		\begin{align*}
			\sum_{z\in X}&|K_\alpha (x,z)|q^{-\alpha |z|}\geq \sum_{t=1}^{|x|}|K_\alpha (x,v_t)|q^{-\alpha t}\\
			&= \sum_{t=1}^{|x|}\left(\frac{1}{B_\alpha}+\frac{q^2}{(q+1)^2}\sum_{v\in X}\frac{1}{b_{\alpha ,|v|}}\Gamma(v,v_t,x)(1-q^{|v|-t})(1-q^{|v|-|x|})\right)q^{-\alpha t}\\
			&\gtrsim b_{\alpha,o}^{-1}\sum_{t=1}^{|x|}\sum_{v\in X}q^{\alpha(|v|-t)}\Gamma(v,v_t,x)(1-q^{|v|-t})(1-q^{|v|-|x|})\\
			&= \sum_{t=1}^{|x|}\sum_{\ell=0}^{t-1}q^{\alpha(\ell-t)}\Gamma(v_\ell,v_t,x)(1-q^{\ell-t})(1-q^{\ell-|x|})\\
			&\gtrsim \sum_{t=1}^{|x|}\sum_{\ell=0}^{t-1}q^{\alpha(\ell-t)}\simeq \sum_{t=1}^{|x|}q^{-\alpha t} q^{\alpha t}=|x|,
		\end{align*}
		where we used the fact that $\supp(\Gamma(\cdot,v_t,x))=[o,v_{t-1}]=[v_0,v_{t-1}]$ and the function is greater than or equal to $\frac{q-1}{q}$ there. 
	\end{proof}

	\begin{prop}\label{lemp=1}
		If $T_{a,b,c}$ is bounded on $L^1_\alpha$, then 
		\begin{alignat*}{2}
			&\alpha<b,\quad&& \text{when }\,c=a+b;\\
			&\alpha\leq b,\quad&&  \text{when }\,c< a+b.
		\end{alignat*}
	\end{prop}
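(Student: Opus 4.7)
The plan is to deduce the two inequalities from tests of $T_{a,b,c}$ on two different families of $L^1_\alpha$ functions, since the duality argument of Proposition~\ref{lemp>1} is not available for $p=1$: the dual of $L^1_\alpha$ is $L^\infty_\alpha$, which lies outside the $L^p_\alpha$ scale covered by Proposition~\ref{-pa<}. From Propositions~\ref{-pa<} and~\ref{a+b>c} we may freely use $a+\alpha>1$ and $c\le a+b$, so only the bounds relating $\alpha$ and $b$ remain.

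The first test is distributed uniformly over a sphere: for $n\ge 1$ set $f_n=\mathbbm{1}_{\sph{o}{n}}\,q^{\alpha n}$, so that $\|f_n\|_{L^1_\alpha}=\#\sph{o}{n}\simeq q^n$. Because $K_c(z,\cdot)$ is harmonic, the mean value identity~\eqref{harmoball} gives $\sum_{|x|=n}K_c(z,x)=\#\sph{o}{n}\,K_c(z,o)=\#\sph{o}{n}/B_c$, from which a short computation yields $T_{a,b,c}f_n(z)\simeq q^{-a|z|}q^{(\alpha-b+1)n}$. Using $a+\alpha>1$ to sum $q^{-(a+\alpha)|z|}$ over $z\in X$, one gets $\|T_{a,b,c}f_n\|_{L^1_\alpha}\simeq q^{(\alpha-b+1)n}$, and the ratio against $\|f_n\|_{L^1_\alpha}$ is $\simeq q^{(\alpha-b)n}$. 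Boundedness therefore forces $\alpha\le b$, which already completes the proof in the case $c<a+b$.

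To upgrade this to the strict inequality $\alpha<b$ in the case $c=a+b$, I will use a second, sharper test concentrated at a single vertex. For $v_n\in\sph{o}{n}$ set $g_n=\mathbbm{1}_{\{v_n\}}\,q^{\alpha n}$, so that $\|g_n\|_{L^1_\alpha}=1$ and a direct computation gives
\[\|T_{a,b,c}g_n\|_{L^1_\alpha}=q^{(\alpha-b)n}\sum_{z\in X}|K_c(z,v_n)|\,q^{-(a+\alpha)|z|}.\]
Arguing by contradiction, suppose $c=a+b$ and $\alpha=b$. Then $a+\alpha=c$, the prefactor $q^{(\alpha-b)n}$ equals $1$, and the expression reduces to $\sum_{z\in X}|K_c(z,v_n)|q^{-c|z|}$. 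Applying Lemma~\ref{lem:nonboundp=1} with its parameter equal to $c>1$ bounds this sum from below by a constant multiple of $|v_n|=n$, contradicting the boundedness of $T_{a,b,c}$. Combined with Step~1 this forces $\alpha<b$. The only delicate point is really the alignment $a+\alpha=c$ that holds exactly when one assumes the limiting case $\alpha=b$ with $c=a+b$: this is the fortunate coincidence that turns the Lemma~\ref{lem:nonboundp=1} lower bound into an obstruction, and apart from it all the estimates reduce to summation of geometric series.
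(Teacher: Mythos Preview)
Your proof is correct. The paper, however, does use a duality argument: it passes to the adjoint $T_{a,b,c}^*=T_{b-\alpha,a+\alpha,c}$ acting on $L^\infty_\alpha$, tests it first on the constant function $\mathbbm{1}_X$ (yielding $T^*_{a,b,c}\mathbbm{1}_X(x)\simeq q^{-(b-\alpha)|x|}$, hence $\alpha\le b$), and then, for $c=a+b$ and $\alpha=b$, on the phase function $g_x(z)=|K_c(z,x)|/K_c(z,x)$, obtaining $T^*_{a,b,c}g_x(x)=\sum_z|K_c(x,z)|q^{-c|z|}\gtrsim|x|$ by the same Lemma~\ref{lem:nonboundp=1}. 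Your primal tests are exactly the dual of these: your sphere indicators $f_n$ are paired against the constant function, and your point masses $g_n$ produce the identical integral $\sum_z|K_c(z,v_n)|q^{-c|z|}$ that the paper reaches via $g_x$. So the two routes are formally dual and rely on the same key estimate; your version has the small advantage of staying inside the $L^1_\alpha$ scale without invoking the adjoint, while the paper's version makes the parallel with Proposition~\ref{lemp>1} more visible. Your remark that ``the duality argument is not available'' is thus a slight overstatement: it is available, just not by appealing to Proposition~\ref{-pa<}.
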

	\begin{proof}
		From Proposition~\ref{a+b>c}, if $T_{a,b,c}$ is bounded on $L^1_\alpha$, then $c\leq a+b$. The boundedness of $T_{a,b,c}$ on $L^1_\alpha$ implies the boundedness of the adjoint operator $T_{a,b,c}^*$ on $L^\infty_\alpha$ which is given by
		\[T^*_{a,b,c} g(x)=q^{-(b-\alpha)|x|}\sum_{z\in X}K_c(x,z)g(z)q^{-(a+\alpha)|z|},\quad g\in L^\infty_\alpha.\]
		In particular, by~\eqref{harmoball}
		\begin{align*}
			T_{a,b,c}^*\mathbbm{1}_X(x)&=q^{-(b-\alpha)|x|}\sum_{z\in X}K_c(x,z)q^{-(a+\alpha)|z|}\\
			&=q^{-(b-\alpha)|x|}\sum_{n=0}^{+\infty}q^{-(a+\alpha)n}\sum_{|z|=n}K_c(x,z)\\
			&=q^{-(b-\alpha)|x|}\frac{1}{B_{c}}\sum_{n=0}^{+\infty}\#\sph{o}{n}q^{-(a+\alpha)n}=\frac{B_{a+\alpha}}{B_c}q^{-(b-\alpha)|x|},
		\end{align*}
		which belongs to $L^\infty_\alpha$ if and only if $\alpha\leq b$.

		Suppose now that $a+b=c$. We know that $\alpha\leq b$ and we want to prove that $\alpha<b$. 
		Suppose by contradiction that $\alpha=b$. For every $x\in X$ define 
		\[g_x(z)=\begin{dcases*}
			|K_c(z,x)|K_c(z,x)^{-1},&\text{if }$K_c(z,x)\neq 0$,\\
			0,&\text{otherwise}.
		\end{dcases*}\]
		Then $\|g_x\|_{L_\alpha^\infty}=1$ and
		\begin{equation}\label{condequivT*}
			T_{a,b,c}^*g_x(x)=\sum_{z\in X}|K_c(x,z)|q^{-c|z|}\gtrsim |x|,
		\end{equation}
		by Lemma~\ref{lem:nonboundp=1}. Thus $T_{a,b,c}^*$ is unbounded on $L_\alpha^\infty$ and consequently $T_{a,b,c}$ is unbounded on $L_\alpha^1$ for $\alpha=b$ and $c=a+b$.

		
	\end{proof}
	Propositions \ref{-pa<}, \ref{a+b>c}, \ref{lemp=1} show that~\ref{3.11 b} implies~\ref{3.11 c} in Theorem~\ref{3.12}.

	\subsubsection{Proof that \ref{3.11 c} implies \ref{3.11 a}}
	We start by stating a technical lemma, which will be useful both in Proposition~\ref{p>1impliesa} and Proposition~\ref{p=1impliesa}, that are devoted to prove that \ref{3.11 c} implies \ref{3.11 a} in the case $p>1$ and $p=1$, respectively.
	\begin{lem}\label{lemstimaK}
		Let $\beta,\gamma>1$. There exist $C_1,C_2>0$ depending only on $\beta$ and $\gamma$ such that
		\begin{equation*}
			\sum_{x\in X}|K_\gamma(z,x)|q^{-\beta|x|}\leq\begin{dcases*}
				C_1(1+q^{-(\beta-\gamma)|z|}),&\text{if }$\gamma\neq\beta$,\\
				C_2(1+|z|),&\text{if }$\gamma=\beta$.
			\end{dcases*}
		\end{equation*}
	\end{lem}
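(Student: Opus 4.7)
The plan is to insert the confluence representation of the kernel from~\eqref{kernelconfluente} into the sum and reduce the estimate to a one-dimensional geometric sum in the variable $t=|z\wedge x|$. The key observation is that, after taking absolute values, $|K_\gamma(z,x)|$ admits an upper bound depending only on $|z\wedge x|$, so the weighted sum over $x$ will collapse once one organizes $X$ according to the confluent of $x$ with $z$.

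First I would derive a pointwise bound on $|K_\gamma(z,x)|$. Using $|\Gamma(\cdot,\cdot,\cdot)|<1$, the trivial estimates $|1-q^{t-|z|}|\leq 1$, $|1-q^{t-|x|}|\leq 1$, and the scaling relation $b_{\gamma,t}=q^{-\gamma t}b_{\gamma,0}$ from~\eqref{Bt}, formula~\eqref{kernelconfluente} yields
\[
|K_\gamma(z,x)|\lesssim 1+\sum_{t=0}^{|z\wedge x|}q^{\gamma t},
\]
with constant depending only on $\gamma$ and $q$.

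Next I would multiply by $q^{-\beta|x|}$, sum over $x\in X$, and exchange the order of summation. Writing $[o,z]=\{v_t\}_{t=0}^{|z|}$ and noting that $|z\wedge x|\geq t$ is equivalent to $x\in T_{v_t}$, one obtains
\[
\sum_{x\in X}|K_\gamma(z,x)|q^{-\beta|x|}\lesssim B_\beta+\sum_{t=0}^{|z|}q^{\gamma t}\sum_{x\in T_{v_t}}q^{-\beta|x|}.
\]
For each $t\geq 1$ the sector $T_{v_t}$ contains exactly $q^{n-t}$ vertices of norm $n\geq t$, so the geometric series in $n$ converges (this is where the hypothesis $\beta>1$ enters) to $q^{-\beta t}/(1-q^{1-\beta})$; for $t=0$ one gets $B_\beta$. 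In either case the inner sum is comparable to $q^{-\beta t}$ with constants depending only on $\beta$.

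Finally, the problem reduces to estimating the elementary geometric sum $\sum_{t=0}^{|z|}q^{(\gamma-\beta)t}$, which is $|z|+1$ when $\gamma=\beta$, bounded by a constant when $\gamma<\beta$, and comparable to $q^{(\gamma-\beta)|z|}=q^{-(\beta-\gamma)|z|}$ when $\gamma>\beta$. Collecting these three cases produces exactly the bounds stated in the lemma. I do not anticipate any serious obstacle: the argument is a clean bookkeeping reduction, and the only points to watch are the sector-counting step and the use of $\beta>1$ to guarantee convergence of the geometric series over norms.
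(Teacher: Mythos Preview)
Your proposal is correct and follows essentially the same route as the paper: insert the confluence formula~\eqref{kernelconfluente}, use $|\Gamma|<1$ and the scaling $b_{\gamma,t}=q^{-\gamma t}b_{\gamma,0}$ to bound $|K_\gamma(z,x)|\lesssim 1+\sum_{t=0}^{|z\wedge x|}q^{\gamma t}$, and then reduce to the geometric sum $\sum_{t=0}^{|z|}q^{(\gamma-\beta)t}$. The only cosmetic difference is that the paper partitions $X$ by the \emph{exact} level $\ell=|z\wedge x|$ (i.e.\ into the annuli $T_{v_\ell}\setminus T_{v_{\ell+1}}$) and then sums, whereas you swap the order of summation and use the equivalence $|z\wedge x|\geq t\Leftrightarrow x\in T_{v_t}$ to sum directly over sectors; both reductions land on the same one-variable sum and your version is marginally cleaner.
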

	\begin{proof}
		Let $z\in X$ and $\{v_j\}_{j=0}^{|z|}=[o,z]$. We start by applying~\eqref{kernelconfluente} and~\eqref{Bt} to the kernel $K_\gamma$, obtaining
		\begin{align*}
			\sum_{x\in X}&|K_\gamma(z,x)|q^{-\beta|x|}\\
			&= \sum_{x\in X}\left|\frac{1}{B_\gamma}+\frac{q^2b_{\gamma,0}^{-1}}{(q-1)^2}\sum_{j=0}^{|z\wedge x|}q^{\gamma j}\Gamma(v_j,z,x)(1-q^{j-|z|})(1-q^{j-|x|})\right|q^{-\beta|x|}\\
			&\leq \frac{B_\beta}{B_\gamma}+ \frac{q^2b_{\gamma,0}^{-1}}{(q-1)^2}\sum_{x\in X}\sum_{j=0}^{|z\wedge x|}q^{\gamma j}\left|\Gamma(v_j,z,x)\right|(1-q^{j-|z|})(1-q^{j-|x|})q^{-\beta|x|}\\
			&\leq \frac{B_\beta}{B_\gamma}+ \frac{q^2b_{\gamma,0}^{-1}}{(q-1)^2}\sum_{x\in X}q^{-\beta|x|}\sum_{j=0}^{|z\wedge x|}q^{\gamma j},
		\end{align*}
	where we used that $|\Gamma|<1$ and $|z|,\,|x|>j$. Now observe that, for every $x\in X$ and $0\leq\ell<|z|$, $|z\wedge x|=\ell$ is equivalent to $x\in \sect{v_\ell}\setminus\sect{v_{\ell+1}}$ and $z\wedge x=z$ if and only if $x\in\sect{z}$. Furthermore, we have that \begin{equation*}
		|\sph{o}{m}\cap\sect{v_\ell}\setminus\sect{v_{\ell+1}}|=\begin{dcases*}
			0,&\text{if }$m<\ell$;\\
			1,&\text{if }$m=\ell$;\\
			(q-1)q^{m-\ell-1},&\text{if }$m>\ell$.
		\end{dcases*}
	\end{equation*}
	Hence, we have
	\begin{align*}
		\sum_{x\in X}q^{-\beta|x|}\sum_{j=0}^{|z\wedge x|}q^{\gamma j}&=\sum_{\ell=0}^{|z|-1}\sum_{x\in \sect{v_\ell}\setminus\sect{v_{\ell+1}}}q^{-\beta|x|}\sum_{j=0}^{\ell}q^{\gamma j}+\sum_{x\in \sect{z}}q^{-\beta|x|}\sum_{j=0}^{|z|}q^{\gamma j}\\
		&\simeq\sum_{\ell=0}^{|z|-1}\left(\sum_{j=0}^{\ell}q^{\gamma j}\right)\sum_{m=\ell}^{+\infty}q^{m-\ell}q^{-\beta m}+\sum_{n=|z|}^{+\infty}q^{n-|z|}q^{-\beta n}\sum_{j=0}^{|z|}q^{\gamma j}\\
		&\simeq \sum_{\ell=0}^{|z|}q^{(\gamma-1) \ell}\sum_{m=\ell}^{+\infty}q^{(1-\beta)m}\simeq \sum_{\ell=0}^{|z|}q^{(\gamma-\beta)\ell},
	\end{align*}
	where we used that $|T_z\cap\sph{o}{n}|=q^{n-|z|}$ when $n\geq|z|$ and $\beta>1$.
	This proves that there exist $C_1,C_2>0$ (depending on $\gamma$ and $\beta$) such that the thesis holds true.
	\end{proof}
	
	\begin{prop}\label{p>1impliesa}
		Let $1<p<\infty$. If $a+b\geq c>1$ and $-pa<\alpha-1<p(b-1)$, then $S_{a,b,c}$ is bounded on $L^p_\alpha$.
	\end{prop}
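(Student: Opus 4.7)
The plan is to apply Schur's test to the positive-kernel operator $S_{a,b,c}$, viewed with respect to $\mu_\alpha$ as an integral operator with kernel $\tilde k(z,x) = q^{-a|z|}|K_c(z,x)|q^{(\alpha-b)|x|}$, and to use a power test function $h(x) = q^{-s|x|}$ for a parameter $s$ to be chosen. After collecting exponents and exploiting the symmetry $K_c(z,x)=K_c(x,z)$ established at the end of Section~\ref{sec:kernel}, the two Schur sums reduce respectively to $q^{-a|z|}\sum_{y\in X} |K_c(z,y)|q^{-(b+sp')|y|}$ and $q^{(\alpha-b)|x|}\sum_{y\in X} |K_c(x,y)|q^{-(a+sp+\alpha)|y|}$. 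I would then invoke Lemma~\ref{lemstimaK} to control each inner sum, provided $b+sp'>1$ and $a+sp+\alpha>1$, and provided neither of these exponents equals $c$ (if either does, an arbitrarily small shift of $s$ removes the obstruction, since the critical values form a set of at most two points where the lemma only degrades logarithmically).

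Lemma~\ref{lemstimaK} produces two contributions in each Schur estimate: a \emph{primary} term which, after division by $h(z)^{p'}$ or $h(x)^p$, dictates an upper bound on $s$, and a \emph{secondary} term whose growth rate is exactly $c-a-b$ and is therefore uniformly bounded precisely because of the hypothesis $c\leq a+b$. Comparing with $C_1 q^{-sp'|z|}$ and $C_2 q^{-sp|x|}$, the primary terms enforce the upper bounds $s \leq a/p'$ and $s \leq (b-\alpha)/p$, while the hypotheses of Lemma~\ref{lemstimaK} itself impose the lower bounds $s > (1-b)/p'$ and $s > (1-a-\alpha)/p$.

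The decisive step is the verification that the admissible range for $s$ is non-empty, which is where the strict hypotheses enter. There are four cross-comparisons between the two lower bounds and the two upper bounds: two of them rearrange into $a+b>1$, automatic from $a+b\geq c>1$; the remaining two rearrange exactly into the strict hypotheses $-pa<\alpha-1$ and $\alpha-1<p(b-1)$. Thus each of the two strict hypotheses is used exactly once, and the admissible set is a genuine open interval. Any $s$ in this interval, taken off the two exceptional values, closes Schur's test and yields the desired $L^p_\alpha$-boundedness of $S_{a,b,c}$. The main subtlety is essentially this bookkeeping and the correct identification of the power test function; once in place, the estimate follows routinely from Lemma~\ref{lemstimaK}.
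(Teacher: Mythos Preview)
Your proposal is correct and follows essentially the same approach as the paper: both apply Schur's test with the power test function $h(x)=q^{-s|x|}$, reduce the two Schur sums via the symmetry of $K_c$ to quantities controlled by Lemma~\ref{lemstimaK}, and verify that the admissible range for $s$ is a nonempty open interval using exactly the four cross-comparisons you describe. The only cosmetic difference is that the paper handles the exceptional parameter values $s=(c-b)/p'$ and $s=(c-a-\alpha)/p$ by invoking the logarithmic case of Lemma~\ref{lemstimaK} together with the strict inequality coming from the open interval, whereas you simply perturb $s$ off those two points; both work.
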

	\begin{proof}
		We set
		\[H(z,x)=|K_c(z,x)|q^{-a|z|}q^{-(b-\alpha)|x|},\]
		so that the operator $S_{a,b,c}$ becomes
		\[ S_{a,b,c}f(z)=\sum_{x\in X}H(z,x)f(x)q^{-\alpha|x|}.\]
		Our purpose is to apply Schur's test (see Theorem~3.6 in~\cite{zhuoperator}) to the integral operator with positive kernel $H\colon X\times X\to [0,+\infty)$. To do so, we have to show that there exists a positive function $h$ on $X$ such that 
		\begin{equation}\label{schur}
			\sum_{z\in X}H(z,x)h(z)^{p'}q^{-\alpha|z|}\lesssim h(x)^{p'},\qquad \sum_{x\in X}H(z,x)h(x)^{p}q^{-\alpha|x|}\lesssim h(z)^{p}.
		\end{equation}
		Observe that the two inequalities assumed for $\alpha$ are equivalent to 
		\[-\frac{a+\alpha-1}{p}<\frac{a}{p'},\qquad-\frac{b-1}{p'}<\frac{b-\alpha}{p}.\]
		Hence, since $a+b>1$, it is possible to choose an element 
		\begin{equation}\label{gamma}
			\gamma\in\left(-\frac{b-1}{p'},\frac{a}{p'}\right)\cap\left(-\frac{a+\alpha-1}{p},\frac{b-\alpha}{p}\right)\neq\emptyset.
		\end{equation}
		We want to show that $h(x)=q^{-\gamma|x|}$ satisfies~\eqref{schur}. Let $z\in X$. We suppose $\gamma\neq\frac{c-b}{p'}$. We can apply Lemma~\ref{lemstimaK} since $b+\gamma p'>1$ by ~\eqref{gamma}, obtaining
		\begin{align*}
			\sum_{x\in X}H(z,x)h(x)^{p'}q^{-\alpha |x|}&=q^{-a|z|} \sum_{x\in X}|K_c(z,x)|q^{-(b+\gamma p')|x|}\\
			&\lesssim 
			q^{-a |z|} (1+q^{-(b+\gamma p'-c)|z|})
			\\
			&\lesssim q^{-\gamma p'|z|}=h(z)^{p'},
		\end{align*}
		where we used $a+b-c\geq 0$ and $a>\gamma p'$. Similarly, when $\gamma=\frac{c-b}{p'}$ we can apply again Lemma~\ref{lemstimaK} and conclude by using $a>\gamma p'$.
		On the other hand, we have that if $\gamma\neq\frac{c-a-\alpha}{p}$, by $a+\gamma p+\alpha>0$ and by Lemma~\ref{lemstimaK},
		\begin{align*}
			\sum_{z\in X}H(z,x)h(z)^{p}q^{-\alpha |z|}&=q^{-(b-\alpha)|x|}\sum_{z\in X}|K_c(z,x)|q^{-(a+\gamma p+\alpha )|z|}\\
			&\lesssim
			q^{-(b-\alpha)|x|} (1+q^{-(a+\gamma p+\alpha-c)|z|})
			\\
			&\lesssim q^{-\gamma p|z|}=h(z)^{p},
		\end{align*}
		since $a+b\geq c$ and, by~\eqref{gamma}, $b-\alpha>\gamma p$. Similarly when $\gamma=\frac{c-a-\alpha}{p}$.
		
		In conclusion, \eqref{schur} holds and by Schur's test the operator $S_{a,b,c}$ is bounded on $L^p_\alpha(X)$. 
	\end{proof}
	Notice that Proposition \ref{p>1impliesa} shows that~\ref{3.11 c} implies~\ref{3.11 a} in Theorem~\ref{3.11}.

	\begin{prop}\label{p=1impliesa}
		If $a+b\geq c$ and
		\begin{alignat*}{2}
			&-a<\alpha-1<b-1,\quad&& \text{when }\,c=a+b;\\
			&-a<\alpha-1\leq b-1,\quad&&  \text{when }\,c< a+b,
		\end{alignat*}
		then $S_{a,b,c}$ is bounded on $L^1_\alpha$.
	\end{prop}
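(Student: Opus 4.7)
The plan is to use the standard necessary-and-sufficient criterion for boundedness on $L^1_\alpha$ of integral operators with non-negative kernels: by Tonelli's theorem,
\begin{equation*}
\|S_{a,b,c}f\|_{L^1_\alpha} = \sum_{x\in X}|f(x)|q^{-\alpha|x|}\Bigl(q^{-(b-\alpha)|x|}\sum_{z\in X}|K_c(z,x)|q^{-(a+\alpha)|z|}\Bigr),
\end{equation*}
so the proof reduces to showing that the quantity in parentheses is uniformly bounded in $x\in X$. Since the hypotheses guarantee $-a<\alpha-1$, we have $a+\alpha>1$, which is precisely the integrability condition needed to apply Lemma~\ref{lemstimaK} with $\beta = a+\alpha$ and $\gamma = c$.

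Next I would split into three subcases according to the sign of $a+\alpha-c$. If $a+\alpha>c$, the lemma yields $\sum_{z}|K_c(z,x)|q^{-(a+\alpha)|z|}\lesssim 1$, and multiplication by $q^{-(b-\alpha)|x|}$ stays bounded because $\alpha\leq b$ under both regimes of hypothesis (iii). If $a+\alpha<c$, the lemma yields $\sum_{z}|K_c(z,x)|q^{-(a+\alpha)|z|}\lesssim q^{(c-a-\alpha)|x|}$, and multiplication by $q^{-(b-\alpha)|x|}$ produces $q^{(c-a-b)|x|}$, which is bounded because $c\leq a+b$.

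The delicate step is the borderline case $a+\alpha=c$, where Lemma~\ref{lemstimaK} only gives $\sum_{z}|K_c(z,x)|q^{-(a+\alpha)|z|}\lesssim 1+|x|$, so that the inner factor becomes $(1+|x|)q^{-(b-\alpha)|x|}$. Here I would use the case dichotomy of (iii) in an essential way: if $c=a+b$, then $a+\alpha=c$ forces $\alpha=b$, which is excluded by the strict inequality $\alpha-1<b-1$, so this subcase simply cannot occur. If instead $c<a+b$, then $a+\alpha=c$ gives $\alpha = c-a<b$, so $b-\alpha>0$ and the exponential $q^{-(b-\alpha)|x|}$ absorbs the linear factor $1+|x|$. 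This matches exactly the strict/non-strict pattern in the statement and is where the hypothesis is tight.

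The main obstacle is therefore not really computational; the crux is unpacking the case structure so that the borderline subcase $a+\alpha=c$ is disposed of correctly, and verifying that the exponential decay from $q^{-(b-\alpha)|x|}$ in the second regime $c<a+b$ always wins against the at-worst polynomial blow-up produced by Lemma~\ref{lemstimaK}. Once all three subcases are treated, the uniform bound on $\sup_{x\in X}q^{-(b-\alpha)|x|}\sum_{z\in X}|K_c(z,x)|q^{-(a+\alpha)|z|}$ follows, concluding that $S_{a,b,c}$ is bounded on $L^1_\alpha$.
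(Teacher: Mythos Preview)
Your proposal is correct and follows essentially the same approach as the paper: Tonelli/Fubini to reduce to a uniform bound on $q^{-(b-\alpha)|x|}\sum_{z}|K_c(z,x)|q^{-(a+\alpha)|z|}$, then Lemma~\ref{lemstimaK} with $\beta=a+\alpha$ and $\gamma=c$, followed by the case split according to whether $a+\alpha=c$ or not. Your treatment of the borderline subcase $a+\alpha=c$ is in fact more explicit than the paper's, which simply records that in this case the strict inequality $b>\alpha$ is available to absorb the linear factor $1+|x|$.
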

	\begin{proof}
		Let $f\in L^1_\alpha$. We suppose $c\neq a+\alpha$ and we observe that, since $a+\alpha>1$, by Lemma~\ref{lemstimaK}  
		\begin{align*}
			\|S_{a,b,c}f\|_{L^1_\alpha}&=\sum_{z\in X}\left| \sum_{x\in X} |K_c(z,x)|f(x)q^{-b|x|}\right|q^{-(a+\alpha)|z|}\\
			&\leq
			\sum_{x\in X}|f(x)|q^{-b|x|} \sum_{z\in X} |K_c(z,x)|q^{-(a+\alpha)|z|}\\
			&\lesssim 
			\sum_{x\in X}|f(x)|q^{-b|x|} (1+q^{-(a+\alpha-c)|x|})
			\\
			&\lesssim \sum_{x\in X}|f(x)|q^{-\alpha|x|}=\|f\|_{L^1_\alpha},
		\end{align*}
		where we used the fact that $a+b-c\geq 0$ and $b\geq\alpha$. The case $c=a+\alpha$ follows similarly using again Lemma~\ref{lemstimaK} and $b>\alpha$. 
		Hence, $S_{a,b,c}$ is bounded on $L^1_\alpha$.
	\end{proof}
	
	Proposition \ref{p=1impliesa} shows that~\ref{3.12 c} implies~\ref{3.12 a} in Theorem~\ref{3.12}.

	\section{Calder\'on-Zygmund decomposition}\label{seccz}
	
	In this section, we discuss a Calder\'on-Zygmund decomposition of functions in $L_\alpha^1$ and we formulate the integral H\"ormander's condition for kernels on the tree which guarantees the weak type (1,1) boundedness of integral operators which are bounded on $L_\alpha^2$ . As byproduct, we have that $P_\alpha$ is of weak type (1,1) for every $\alpha>1$. 
	
	
	
	By Proposition \ref{nondoubling} the measure metric space $(X,d,\mu_{\alpha})$ is nondoubling. We now introduce the Gromov distance $\rho$, see~\cite{arcozziRSW},~\cite{Gromov}, and show that the measure metric space $(X,\rho,\mu_{\alpha})$ is doubling. For every $u,v\in X$ define
	\begin{equation}
		\rho(v,u)=\begin{cases*}
			0,& if $u=v$; \\
			e^{-|v\wedge u|},& if $v\neq u$.
		\end{cases*}
	\end{equation}
	For every $v\in X$, observe that if $u\in X\setminus\{v\}$ then $\rho(v,u)=e^{-|v\wedge u|}\in [e^{-|v|},1]$ and $|v\wedge u|=-\log(\rho(v,u))$, that is \[u\in\sect{p^{|v|+\log(\rho(v,u))}(v)}\setminus\sect{p^{|v|+\log(\rho(v,u))-1}(v)}.\]
	Thus, the nontrivial balls with respect to $\rho$ centred at $v$ are sectors of the tree. More in general, we have
	\begin{equation}\label{balle}
		B_\rho(v,r):=\{ u\in X\colon\rho(v,u)<r\}=\begin{cases*}
			\{v\},&if $0<r\leq e^{-|v|}$,\\
			T_{p^{|v|+\lfloor\log r\rfloor}(v)},&if $e^{-|v|}<r\leq 1$,\\
			X,&if $r>1$.
		\end{cases*}
	\end{equation}
	Observe that in the special case $v=o$ we have that $B_\rho(o,r)=\{o\}$ if $0<r\leq 1$ and $B_\rho(o,r)=X$ for every $r>1$.
	Hence every vertex $v$ is the center of exactly $|v|+2$ balls.  
	
	\begin{prop}\label{doubling}
		For every $\alpha>1$ the measure metric space $(X,\rho,\mu_\alpha)$ is globally doubling with doubling constant \[D_\alpha=\max\left\{q^\alpha+1,\frac{q^\alpha+1}{q^\alpha-q}\right\},\]
		that is 
		\[\mu_\alpha(B_\rho(v,2r))\leq D_\alpha\mu_\alpha(B_\rho(v,r)),\qquad v\in X,\,r>0.\]
	\end{prop}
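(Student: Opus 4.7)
The plan is to use the explicit description of balls in \eqref{balle} together with the closed-form measures of sectors and of $X$ to carry out a direct case analysis. Since $\mu_\alpha$ is radial with $\mu_\alpha(x)=q^{-\alpha|x|}$, one computes, for $y\in X$,
\[
\mu_\alpha(\sect{y})=\sum_{n\geq |y|}q^{n-|y|}q^{-\alpha n}=\frac{q^{\alpha(1-|y|)}}{q^\alpha-q},
\]
while $\mu_\alpha(\{v\})=q^{-\alpha|v|}$ and $\mu_\alpha(X)=B_\alpha=(q^\alpha+1)/(q^\alpha-q)$. This makes every ratio under consideration explicitly computable.

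The key observation that controls the case analysis is that $\log(2r)=\log 2+\log r$ with $0<\log 2<1$, so $\lfloor\log(2r)\rfloor\in\{\lfloor\log r\rfloor,\lfloor\log r\rfloor+1\}$. Consequently, passing from $B_\rho(v,r)$ to $B_\rho(v,2r)$ can produce only the following transitions: (a) the ball is unchanged; (b) $\{v\}\to \sect{v}$, occurring when $r\le e^{-|v|}<2r$; (c) $\sect{y}\to\sect{p(y)}$ for some $y$ with $|y|\geq 2$; (d) $\sect{y}\to X$ when $|y|=1$ (note that $\sect{p(y)}=\sect{o}=X$); (e) $\{o\}\to X$, which can happen only when $v=o$ and $1/2<r\leq 1$; (f) the case $\{v\}\to X$ with $v\neq o$ is ruled out, since it would require $1/2<r\leq e^{-|v|}$, forcing $|v|<\log 2<1$.

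Plugging the measures into each of these transitions yields the ratios $1$ in case (a); $q^\alpha/(q^\alpha-q)$ in case (b); $q^\alpha$ in case (c); $q^\alpha+1$ in case (d); and $B_\alpha$ in case (e). Since $q^\alpha/(q^\alpha-q)<B_\alpha$ and $q^\alpha<q^\alpha+1$, the only two candidates for the supremum are $q^\alpha+1$ and $B_\alpha$, so
\[
\frac{\mu_\alpha(B_\rho(v,2r))}{\mu_\alpha(B_\rho(v,r))}\;\leq\;\max\!\left\{q^\alpha+1,\,\frac{q^\alpha+1}{q^\alpha-q}\right\}=D_\alpha,
\]
uniformly in $v\in X$ and $r>0$, which is the claim.

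The argument is conceptually straightforward; the main obstacle is purely the bookkeeping of boundary cases, namely correctly identifying the range of $r$ in which each of the three regimes of \eqref{balle} applies to both $r$ and $2r$ simultaneously, and making sure that the exceptional case (e), which forces the root $v=o$ and produces the $B_\alpha$ contribution, is not missed. Once the transition types (a)--(f) are enumerated, everything else reduces to the two explicit computations of $\mu_\alpha(\sect{y})/\mu_\alpha(\sect{p(y)})$ and $\mu_\alpha(X)/\mu_\alpha(\{o\})$.
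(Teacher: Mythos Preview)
Your proof is correct and follows essentially the same approach as the paper: both use the explicit description of $\rho$-balls as sectors, the observation that $\lfloor\log(2r)\rfloor\in\{\lfloor\log r\rfloor,\lfloor\log r\rfloor+1\}$, and then compute the same handful of measure ratios (singleton-to-sector, sector-to-parent-sector, sector-at-level-$1$-to-$X$, and $\{o\}$-to-$X$). Your enumeration of the transitions (a)--(f), including the explicit exclusion of the impossible jump $\{v\}\to X$ for $v\neq o$, is slightly more systematic than the paper's linear case walk, but the content is identical.
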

	\begin{proof}
		Let $\alpha>1$. We start by observing that for every $u\in X\setminus\{o\}$
		\begin{equation}\label{misurasettore}
			\mu_\alpha(\sect{u})=\sum_{\ell=0}^{+\infty}q^{\ell}q^{-\alpha(\ell+|u|)}=q^{-\alpha|u|}\frac{1}{1-q^{1-\alpha}}.
		\end{equation}
		Let $0<r\leq 1$. Observe that if $\{x\}:=x-\lfloor x\rfloor\in[0,1)$, then
		\begin{equation*}
			\lfloor \log(2r)\rfloor=\begin{cases*}
				\lfloor \log r\rfloor,&if $0\leq \{\log r\}<1-\log2$,\\
				1+\lfloor \log r\rfloor,&if $1-\log2\leq \{\log r\}<1$.
			\end{cases*}
		\end{equation*}
		Hence whenever $B_\rho(v,r)=\{v\}$ we have that $B_\rho(v,2r)\in \{\{v\},\sect{v}\}$, and if $B_\rho(v,r)=\sect{u}$ for some $u\in X\setminus\{o\}$ then $B_\rho(v,2r)\in\{\sect{u},\sect{p(u)}\}$.

		If $v\in X\setminus\{o\}$, then 
		\begin{equation}\label{rapportosettorevertice}
			\frac{\mu_\alpha(\sect{v})}{\mu_\alpha(\{v\})}=\frac{q^{-\alpha|v|}(1-q^{1-\alpha})^{-1}}{q^{-\alpha|v|}}=\frac{1}{1-q^{1-\alpha}}.
		\end{equation}
		If $|v|>1$, then 
		\begin{equation}\label{rapportosettori}
			\frac{\mu_\alpha(\sect{p(v)})}{\mu_\alpha(\sect{v})}=\frac{q^{-\alpha(|v|-1)}(1-q^{1-\alpha})^{-1}}{q^{-\alpha|v|}(1-q^{1-\alpha})^{-1}}=q^\alpha.
		\end{equation}
		If $|v|=1$, then 
		\begin{equation}\label{rapportosettoretutto}
			\frac{\mu_\alpha(X)}{\mu_\alpha(\sect{v})}=\frac{(1+q^{-\alpha})(1-q^{1-\alpha})^{-1}}{q^{-\alpha}(1-q^{1-\alpha})^{-1}}=q^\alpha+1.
		\end{equation}
		Finally, we consider the case $v=o$. In this case, it is sufficient to check that, by~\eqref{misuratotalealpha}
		\[\frac{\mu_\alpha(X)}{\mu_\alpha(\{o\})}=\frac{1+q^{-\alpha}}{1-q^{1-\alpha}}=\frac{q^\alpha+1}{q^\alpha-q}.\]
		Hence $(X,\rho,\mu_\alpha)$ is doubling with constant $D_\alpha=\max\{q^\alpha+1,(q^\alpha+1)/(q^\alpha-q)\}$.
	\end{proof}
	
	As a consequence of Proposition \ref{doubling} in this setting one can develop a classical Calder\'on-Zygmund theory using the balls of the Gromov metric, i.e.~using sectors (see \cite{coifweiss},~\cite{SteinHA}). Our argument is inspired by~\cite[Theorem 1.1]{denghuang}, where a similar construction is developed in the setting of the hyperbolic disk (see also the Whitney decomposition in~\cite{ArcRocSaw}).
	 Since it is not difficult to construct an explicit decomposition algorithm for sectors and then describe the associated Calder\'on-Zygmund decomposition of integrable functions, we think that it is worthwhile discussing this construction in detail, as we do next.

	We start with a preliminary geometrical result that allows us to obtain an infinite family of partitions of a sector. In particular, the partition at a given scale is a refinement of the partition at the previous scale, and the measure of a partitioning set is comparable with the measure of the set which contains it in the previous partition.
	\begin{lem}\label{lemdecomp}
		Let $v\in X\setminus\{o\}$. For every $m\in\N$, there exists $I_m\in\N$ and sets $Q_{k,m}\subseteq\sect{v}$ for every $k\in\cI_m:=\{0,\dots,I_m\}$ such that
		\begin{enumerate}[label=(\roman*)]
			\item\label{i} $Q_{k,m}\cap Q_{k',m}=\emptyset$ for every $k\neq k'$;
			\item\label{ii} the sector $\sect{v}$ is the disjoint union of the sets $Q_{k,m}$, $k\in\cI_m$;
			\item\label{iii} the partition at scale $m>0$ is a refinement of the partition at scale $m-1$, that is, for every $k'\in\cI_{m-1}$ there exists $\cI_{m,k'}\subseteq\cI_m$ such that \[Q_{k',m-1}=\bigsqcup_{k\in\cI_{m,k'}}Q_{k,m};\]
			\item\label{iiii} for every $k\in\cI_m$ and $k'\in\cI_{m-1}$ for which $Q_{k,m}\subseteq Q_{k',m-1}$, we have
			\[\mu_\alpha(Q_{k,m})\leq\mu_\alpha(Q_{k',m-1})\leq D_{\alpha}\mu_\alpha(Q_{k,m}).\]
		\end{enumerate}
	\end{lem}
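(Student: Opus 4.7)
The plan is to construct the family of partitions by a simple recursive splitting algorithm and then verify the four properties by a straightforward induction on $m$. The natural building block is the identity $T_u = \{u\} \sqcup \bigsqcup_{y\in s(u)} T_y$, which decomposes any sector into its root vertex and $q$ smaller subsectors; each of the pieces appearing is again either a singleton or a sector, so the procedure can be iterated indefinitely.

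More concretely, I would set $I_0 = 0$ and $Q_{0,0} = T_v$ as the base case. Given the partition at scale $m-1$, I refine each piece $Q_{k',m-1}$ according to its shape: if $Q_{k',m-1} = T_u$ for some $u \in X \setminus \{o\}$, I replace it by the $q+1$ sets $\{u\}, T_{y_1}, \ldots, T_{y_q}$, where $s(u) = \{y_1,\ldots,y_q\}$; if $Q_{k',m-1} = \{u\}$ is already a singleton, I leave it unchanged (setting $\mathcal{I}_{m,k'}$ to have a single element whose associated set equals $\{u\}$). Collecting all the pieces obtained this way and ordering them arbitrarily yields the enumeration $\mathcal{I}_m = \{0,\ldots,I_m\}$.

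Properties (i) and (ii) are inherited at scale $m$ from the corresponding properties at scale $m-1$ together with the disjoint decomposition $T_u = \{u\} \sqcup \bigsqcup_{y\in s(u)} T_y$, and (iii) is immediate from the construction. The only genuine content is (iv), the measure bound. Since $Q_{k,m}\subseteq Q_{k',m-1}$, the left inequality $\mu_\alpha(Q_{k,m}) \leq \mu_\alpha(Q_{k',m-1})$ is trivial. For the right inequality, the case where $Q_{k',m-1}$ is a singleton is trivial (ratio equals $1 \leq D_\alpha$), so I would only need to control the ratios that arise when a sector $T_u$ is refined. Using \eqref{misurasettore}, \eqref{rapportosettorevertice} and \eqref{rapportosettori} (already computed in the proof of Proposition~\ref{doubling}), one has
\[
\frac{\mu_\alpha(T_u)}{\mu_\alpha(\{u\})} = \frac{1}{1-q^{1-\alpha}} = \frac{q^\alpha}{q^\alpha - q}, \qquad \frac{\mu_\alpha(T_u)}{\mu_\alpha(T_y)} = q^\alpha,
\]
and both quantities are bounded above by $D_\alpha = \max\{q^\alpha+1,(q^\alpha+1)/(q^\alpha-q)\}$.

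I do not expect any serious obstacle: the construction is essentially forced by the self-similar geometry of the sectors, and the doubling constant $D_\alpha$ was engineered in Proposition~\ref{doubling} precisely to dominate the two ratios above. The only mildly delicate point is bookkeeping the enumeration $\mathcal{I}_{m,k'} \subseteq \mathcal{I}_m$ so that (iii) is literally satisfied, but this can be handled by, e.g., ordering the new pieces lexicographically by the pair $(k',\text{local index})$.
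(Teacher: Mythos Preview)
Your proposal is correct and follows essentially the same approach as the paper: the paper also starts with $Q_{0,0}=T_v$ and at each step splits a sector $T_u$ into the singleton $\{u\}$ together with the $q$ subsectors $T_y$, $y\in s(u)$, leaving singletons untouched, and then verifies (iv) via exactly the ratios $\mu_\alpha(T_u)/\mu_\alpha(\{u\})=(1-q^{1-\alpha})^{-1}$ and $\mu_\alpha(T_{p(u)})/\mu_\alpha(T_u)=q^\alpha$ from Proposition~\ref{doubling}. The only cosmetic difference is that the paper supplies an explicit enumeration of the vertices of $T_v$ (setting $v_0=v$, $s(v_k)=\{v_{qk+\ell}:1\le\ell\le q\}$, and $I_m=(q^{m+1}-q)/(q-1)$), which makes the index sets $\cI_{m,k'}$ concrete, whereas you leave this bookkeeping abstract.
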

	Observe that in (iv) the constant $D_\alpha$ can be replaced by $\max\{q^\alpha,(1-q^{1-\alpha})^{-1}\}$, because we focus only on $\sect{v}$. 
	\begin{proof}
		For every $m\in\N$ we set
		\[I_m=\frac{q^{m+1}-q}{q-1}.\]
		We label the vertices of $\sect{v}$ in such a way that $v_0=v$ and $s(v_k)=\{v_{qk+\ell}\colon \ell\in\{1,\dots,q\}\}$ for every $k\in\N$. Since $\cI_0=\{0\}$ it is sufficient to set $Q_{0,0}=\sect{v}$. Then for every $m\in \N\setminus\{0\}$ we set
		\begin{alignat*}{2}
			Q_{k,m}&:=\{v_k\},\qquad&&\text{if }k\in\cI_{m-1},\\ 
			Q_{k,m}&:=\sect{v_k},&&\text{if }k\in\cI_m\setminus\cI_{m-1}. 
		\end{alignat*}
		In this way, \ref{i}, \ref{ii}, and \ref{iii} easily follow by construction.   Finally,~\ref{iiii} follows from~\eqref{rapportosettorevertice},~\eqref{rapportosettori},~\eqref{rapportosettoretutto}, and the fact that 
		\begin{equation}
			Q_{k',m-1}\in\begin{dcases*}
				\{\sect{v},\sect{p(v)} \},&if $Q_{k,m}=\sect{v}$;\\
				\{\{v\},\sect{v} \},&if $Q_{k,m}=\{v\}$.
			\end{dcases*}
		\end{equation}
	\end{proof}
	The previous result leads to a Calder\'on-Zygmund decomposition for integrable functions on the tree at a level $t\in\R^+$ sufficiently large w.r.t~the $L_\alpha^1$-norm of the function.
	\begin{prop}
		\label{caldzyg}
		Let 
		$f\in L_\alpha^1$ and $t>\|f\|_{L^1_\alpha}/\mu_\alpha(X)$. There exist two families $\cQ$ and $\cF$ of disjoint sets of the form $Q_{k,m}$ such that, if we denote by $\Omega$ and $F$ the disjoint union of all the sets in $\cQ$ and $\cF$, respectively, 
		the following properties hold:
		\begin{enumerate}
			\item $X=\Omega\sqcup F$;
			\item $|f(z)|\leq t$ for every $z\in F$;
			\item there exist $g,\,b\colon X\to\C$ and $C>0$ such that $f=g+b$, $\supp\, b\subseteq \Omega$, and $\|g\|_{L_\alpha^2}^2\lesssim t\|f\|_{L_\alpha^1}$. Moreover, if we set $b_Q=b\mathbbm{1}_Q$ for every $Q\in\cQ$, then
			\[\sum_{z\in Q} b_Q(z)q^{-\alpha|z|}=0,\qquad \sum_{Q\in\cQ}\|b_Q\|_{L_\alpha^1}\leq C\|f\|_{L_\alpha^1},\qquad Q\in\cQ.\]
			%
			%
		\end{enumerate}
	\end{prop}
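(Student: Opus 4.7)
The plan is to run the standard Calderón–Zygmund stopping-time argument on a dyadic-type forest built from Lemma~\ref{lemdecomp}, with the doubling property of Proposition~\ref{doubling} furnishing the parent/child volume comparison. Without loss of generality I may assume $f\geq 0$ (replacing $f$ by $|f|$ otherwise). I organise the sets $Q_{k,m}$ into a rooted forest on $X$ by regarding the whole tree $X$ as a virtual root with children $\{o\}$ and $\sect{y}$, $y\in s(o)$, and declaring the children of a sector $\sect{u}$, $u\neq o$, to be $\{u\}$ together with $\sect{u'}$, $u'\in s(u)$. This exactly mirrors the passage from scale $m$ to scale $m+1$ in Lemma~\ref{lemdecomp}, and the hypothesis $t>\|f\|_{L^1_\alpha}/\mu_\alpha(X)$ says precisely that the average of $f$ on the virtual root is strictly less than $t$.

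Next I apply the stopping rule: starting at $X$, I descend the forest; whenever a node $Q$ visited during the descent satisfies
\[
\frac{1}{\mu_\alpha(Q)}\sum_{z\in Q} f(z)\,q^{-\alpha|z|}>t,
\]
I add $Q$ to $\cQ$ and halt the descent at $Q$; otherwise I recurse on the children of $Q$. Every vertex $z$ whose descent never triggers the rule terminates at the singleton $\{z\}$ (a leaf of the forest), necessarily with $f(z)\leq t$, and I collect those singletons in $\cF$. This yields items~(1) and~(2) at once. For each $Q\in\cQ$ the parent $\widetilde Q$ in the forest has average $\leq t$ by the algorithm (when $Q$ is a child of the virtual root this is the hypothesis on $t$); combined with property~(iv) of Lemma~\ref{lemdecomp} (equivalently, the global doubling of Proposition~\ref{doubling}), which gives $\mu_\alpha(\widetilde Q)\leq D_\alpha\,\mu_\alpha(Q)$, this implies
\[
t<\frac{1}{\mu_\alpha(Q)}\sum_{z\in Q} f(z)\,q^{-\alpha|z|}\leq D_\alpha\, t.
\]

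I then define $g(z)=f(z)$ for $z\in F$, $g(z)=\mu_\alpha(Q)^{-1}\sum_{w\in Q}f(w)q^{-\alpha|w|}$ for $z\in Q\in\cQ$, and set $b=f-g$, $b_Q=b\,\mathbbm{1}_Q$. The zero-mean property of each $b_Q$ is immediate from the definition of $g$; by the triangle inequality $\sum_{Q\in\cQ}\|b_Q\|_{L^1_\alpha}\leq 2\sum_{Q\in\cQ}\sum_{z\in Q}f(z)q^{-\alpha|z|}\leq 2\|f\|_{L^1_\alpha}$. For the $L^2$ bound I combine $|g|\leq D_\alpha t$ on $\Omega$ (from the two-sided bound above) with $g=f\leq t$ on $F$ to obtain $\|g\|_\infty\leq D_\alpha t$, and $\|g\|_{L^1_\alpha}\leq \|f\|_{L^1_\alpha}$ is clear from the stopping-time construction, so $\|g\|_{L^2_\alpha}^2\leq \|g\|_\infty\|g\|_{L^1_\alpha}\lesssim t\,\|f\|_{L^1_\alpha}$.

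The main obstacle is organisational: one must glue the scale-indexed partitions of Lemma~\ref{lemdecomp}, each defined inside a single sector, into one refining dyadic forest on the entire tree, and verify that the parent/child volume comparison is uniform across scales (this is where Proposition~\ref{doubling} enters). The minor technicality that $\{o\}$ is not formally of the form $Q_{k,m}$ is handled by placing $\{o\}$ in $\cF$ if $f(o)\leq t$ and in $\cQ$ otherwise, which is the only point at which the conclusion is read up to a harmless extension of the admissible family of sets.
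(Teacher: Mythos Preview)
Your proposal is correct and follows essentially the same stopping-time construction as the paper: both run the Calder\'on--Zygmund selection on the dyadic forest of sectors furnished by Lemma~\ref{lemdecomp}, use the doubling bound of Proposition~\ref{doubling} to obtain the two-sided average estimate $t<\mu_\alpha(Q)^{-1}\int_Q|f|\,d\mu_\alpha\leq D_\alpha t$ on each selected $Q$, handle $\{o\}$ by the same case distinction, and define $g$ as $f$ on $F$ and as the $\mu_\alpha$-average on each $Q\in\cQ$. The only cosmetic differences are that the paper works sector-by-sector over $|v|=1$ before gluing rather than introducing your virtual root, and bounds $\|g\|_{L^2_\alpha}^2$ directly via $\mu_\alpha(\Omega)\leq t^{-1}\|f\|_{L^1_\alpha}$ rather than via the interpolation $\|g\|_{L^2_\alpha}^2\leq\|g\|_\infty\|g\|_{L^1_\alpha}$.
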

	\begin{proof}
		For every $v\in\sph{o}{1}$ we consider the decomposition of the sector $T_v$ given by Lemma~\ref{lemdecomp}. We define two families of subsets $\mathcal{Q}_v$ and $\cF_v$ following the steps below. Starting from $Q_{k,m}=Q_{0,0}=\sect{v}$:
		\begin{enumerate}
			\item[1)] if 
			\[	\frac 1{\mu_\alpha(Q_{k,m})}\sum_{z\in Q_{k,m}}|f(z)|q^{-\alpha|z|}> t,\]
			then we put $Q_{k,m}\in\mathcal{Q}_v$ and we stop. Otherwise,
			\item[2a)] if $\# Q_{k,m}=1$ then $Q_{k,m}\in \cF_v$ and we stop;
			\item[2b)] if $\# Q_{k,m}>1$ then for each set in the family \[Q_{k,m+1}\cup\{Q_{kq+j,m+1}\colon j\in{1,\dots q}\}\]
			we repeat the procedure, starting from 1).
		\end{enumerate}
		We define
		\[\cQ:=\begin{dcases*}
			\bigsqcup_{v\in\sph{o}{1}}\cQ_v,&\text{if }$|f(o)|\leq t$;\\
			\{o\}\cup \bigsqcup_{v\in\sph{o}{1}}\cQ_v,&\text{otherwise,}
		\end{dcases*} \quad \cF:=\begin{dcases*}
			\{o\}\cup\bigsqcup_{v\in\sph{o}{1}}\cF_v,&\text{if }$|f(o)|\leq t$;\\
			\bigsqcup_{v\in\sph{o}{1}}\cF_v,&\text{otherwise.}
		\end{dcases*}\]
		We denote by $\Omega$ and $F$ the (disjoint) union of all the subsets in $\cQ$ and $\cF$, respectively. The sets $\Omega$ and $F$ clearly satisfy (i) and (ii).
		We prove that, for every $Q\in\cQ$,
		\begin{equation}\label{tipot}
			t<  \frac 1{\mu_\alpha(Q)}\sum_{z\in Q}|f(z)|q^{-\alpha|z|}\leq C_\alpha t,\qquad Q\in\cQ.
		\end{equation}
		For every $Q\in \cQ$ we put\[\tilde{Q}=\begin{dcases*}
			X,&if $Q=\{o\}$ or $Q=Q_{0,0}\in\cQ_v$, $v\in\sph{o}{1}$\\
			Q_{k',m-1}&if $Q=Q_{k,m}\in\cQ_v$, $m>0$, $v\in\sph{o}{1}$,
		\end{dcases*}\]
		where $k'$ is defined in (iv) of Lemma~\ref{lemdecomp}. Observe that $\tilde{Q}\not\in\cQ$ and that, by Proposition~\ref{doubling}, $\mu_\alpha(\tilde{Q})\leq C_\alpha\mu_\alpha(Q)$. Then we have that
		\[\frac{1}{\mu_\alpha(Q)}\sum_{z\in Q}|f(z)|q^{-\alpha|z|}\leq \frac{\mu_\alpha(\tilde Q)}{\mu_\alpha(Q)}\frac{1}{\mu_\alpha(\tilde Q)}\sum_{z\in \tilde Q}|f(z)|q^{-\alpha|z|}\leq C_\alpha t,\]
		which gives~\eqref{tipot}.
		It is easy to see that 
		\begin{equation}\label{Omegaminore}
			\mu_\alpha(\Omega)\leq \frac{1}{t}\sum_{Q\in\cQ}\sum_{x\in Q}\frac{1}{\mu_\alpha(Q)}\left(\sum_{z\in Q}|f(z)|q^{-\alpha|z|}\right)q^{-\alpha|x|}\leq \frac{\|f\|_{L_\alpha^1}}{t}.
		\end{equation}
		
		We now define $b=f-g$, where
		\begin{equation*}
			g(z)=\begin{dcases*}
				f(z),&$z\in F$;\\
				\frac1{\mu_\alpha(Q)}\sum_{x\in Q}f(x)q^{-\alpha|x|},&$z\in Q$.
			\end{dcases*}
		\end{equation*}
		It is obvious that $\supp\,b\subseteq\Omega$. We show next that $\|g\|_{L^2_\alpha}^2\leq (1+C_\alpha^2)t\|f\|_{L_\alpha^1}$. Indeed, by~\eqref{tipot},
		\begin{align*}
			\|g\|_{L_{\alpha}^2}^2&=\sum_{z\in F}|g(z)|^2q^{-\alpha|z|}+\sum_{z\in \Omega}|g(z)|^2q^{-\alpha|z|}\\
			&=\sum_{z\in F}|f(z)|^2q^{-\alpha|z|}+\sum_{Q\in\cQ}\sum_{z\in Q}\left|\frac1{\mu_\alpha(Q)}\sum_{x\in Q}f(x)q^{-\alpha|x|}\right|^2q^{-\alpha|z|}\\
			&\leq\sum_{z\in F}t|f(z)|q^{-\alpha|z|}+\mu_\alpha(\Omega) C_\alpha^2t^2\leq
			(1+C_\alpha^2)t\|f\|_{L^1_\alpha}<+\infty,
		\end{align*}
		where we used~\eqref{Omegaminore}.
		The fact that $b_Q=b\mathbbm{1}_Q$, $Q\in\cQ$, has vanishing mean on $Q$ follows by construction. Furthermore, since $|b(z)|\leq |f(z)|+|g(z)|$ we have
		\begin{align*}
			\sum_{Q\in \cQ}\sum_{z\in Q}|b_Q(z)|q^{-\alpha|z|}&\leq \sum_{z\in \Omega}|f(z)|q^{-\alpha|z|}+ \sum_{Q\in \cQ}\sum_{z\in Q}|g(z)|q^{-\alpha|z|}\\
			&\leq \|f\|_{L_\alpha^1}+\mu_\alpha(\Omega)C_\alpha t\lesssim \|f\|_{L_\alpha^1},
		\end{align*}
		by~\eqref{Omegaminore}.
	\end{proof}

	In the doubling measure metric space $(X,\rho,\mu_\alpha)$, the standard integral H\"ormander's condition (see~\cite{hormander} and formula~(10) Ch.I in~\cite{SteinHA}) for a kernel $K\colon X\times X\to \C$ is 
	\begin{equation*}
		\sup_{v\in X,r>0}\sup_{x,y\in B_\rho(v,r)}\int_{X\setminus B_\rho(v,2r)}|K(z,x)-K(z,y)|\mu_\alpha(z)<+\infty.
	\end{equation*}
	Thanks to the shape of the balls, see~\eqref{balle}, it is equivalent to
	\begin{equation}\label{hormander}
		\sup_{v\in X\setminus\{o\}}\sup_{x,y\in \sect{v}}\sum_{z\in X\setminus \sect{v}}|K(z,x)-K(z,y)|q^{-\alpha|z|}<+\infty.
	\end{equation}
	Notice that this is precisely what is proved to hold in Proposition~\ref{prophormalpha} for the Bergman kernel $K_\alpha$.
	We then have the following boundedness result for integral operators (see Theorem 3 Ch.I \cite{SteinHA}).

	\begin{thm}\label{thmweak}
		Fix $\alpha>1$ and let $K\colon X\times X\to \C$  be a kernel  satisfying the H\"ormander's condition~\eqref{hormander} with respect to $\mu_\alpha$. If the integral operator defined on functions $f\in L_\alpha^2$ by
		\[	\cK f(z)=\sum_{x\in X}K(z,x)f(x)q^{-\alpha|x|}\]
		is bounded on $L_\alpha^2$, then $\cK$
		is of weak type (1,1).
		Furthermore, $\cK$ admits a bounded extension $\cK$ on $L_\alpha^p$, for every $1<p< 2$.
	\end{thm}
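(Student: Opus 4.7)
The plan is a standard Calder\'on-Zygmund argument executed with the sector-based decomposition of Proposition~\ref{caldzyg}, closed off by Marcinkiewicz interpolation. Since finitely supported functions lie in $L^2_\alpha$ and are dense in $L^1_\alpha$, it is enough to prove the weak type $(1,1)$ estimate for $f\in L^2_\alpha$, after which $\cK$ extends uniquely to a bounded sublinear map on $L^1_\alpha$ into weak-$L^1_\alpha$. Fix such an $f$ and $t>0$. If $t\leq \|f\|_{L^1_\alpha}/\mu_\alpha(X)$ the bound $\mu_\alpha(\{|\cK f|>t\})\leq \mu_\alpha(X)\leq \|f\|_{L^1_\alpha}/t$ is automatic, so assume otherwise and apply Proposition~\ref{caldzyg} to write $f=g+b$ with $b=\sum_{Q\in\cQ}b_Q$, bad set $\Omega=\bigsqcup_{Q\in\cQ}Q$ of measure $\mu_\alpha(\Omega)\lesssim \|f\|_{L^1_\alpha}/t$, and $\|g\|_{L^2_\alpha}^2\lesssim t\|f\|_{L^1_\alpha}$. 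Then split
\[
\mu_\alpha(\{|\cK f|>t\})\leq \mu_\alpha(\{|\cK g|>t/2\})+\mu_\alpha(\Omega)+\mu_\alpha(\{z\notin\Omega:|\cK b(z)|>t/2\}).
\]

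For the good part I would combine Chebyshev with the assumed $L^2_\alpha$-boundedness: $\mu_\alpha(\{|\cK g|>t/2\})\leq 4t^{-2}\|\cK\|^2\|g\|_{L^2_\alpha}^2\lesssim \|f\|_{L^1_\alpha}/t$. For the bad part the key observation is that the atoms $Q\in\cQ$ produced by Lemma~\ref{lemdecomp} are either singletons or full sectors $\sect{u}$ with $u\neq o$; in the singleton case the mean-zero property forces $b_Q\equiv 0$, so only sectorial atoms $Q=\sect{u}$ contribute. For each such $Q$ I choose $y_Q=u$, use $\sum_{x}b_Q(x)q^{-\alpha|x|}=0$ to insert the constant $K(z,y_Q)$, and write
\[
\cK b_Q(z)=\sum_{x\in Q}\bigl(K(z,x)-K(z,y_Q)\bigr)b_Q(x)\,q^{-\alpha|x|}.
\]
Since any $z\notin\Omega$ lies outside $Q=\sect{u}$ while both $x$ and $y_Q$ lie in $\sect{u}$, hypothesis~\eqref{hormander} applied with $v=u$ bounds $\sum_{z\notin\Omega}|K(z,x)-K(z,y_Q)|q^{-\alpha|z|}$ by a constant uniformly in $x$ and $Q$. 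Interchanging the order of summation and using $\sum_{Q}\|b_Q\|_{L^1_\alpha}\lesssim \|f\|_{L^1_\alpha}$ from Proposition~\ref{caldzyg} yields $\sum_{z\notin\Omega}|\cK b(z)|q^{-\alpha|z|}\lesssim \|f\|_{L^1_\alpha}$; a final Chebyshev step produces the desired bound $\mu_\alpha(\{z\notin\Omega:|\cK b(z)|>t/2\})\lesssim\|f\|_{L^1_\alpha}/t$.

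Assembling the three pieces gives the weak type $(1,1)$ estimate, and the Marcinkiewicz interpolation theorem applied between the weak type $(1,1)$ bound and the assumed strong type $(2,2)$ bound produces the bounded extension of $\cK$ to $L^p_\alpha$ for every $1<p<2$. The main subtlety I anticipate is matching the Calder\'on-Zygmund atoms with the sectorial form of Hörmander's condition~\eqref{hormander}: one usually needs an enlargement $\tilde{Q}\supseteq Q$ so that $z\notin\tilde{Q}$ is far enough from $Q$ to apply the regularity estimate. Here the Gromov-metric geometry, together with the specific choice of atoms in Lemma~\ref{lemdecomp} (all of which are either single vertices or full sectors), makes this enlargement unnecessary—taking $z\notin Q$ already places $z$ outside $\sect{u}$, which is precisely the regime covered by~\eqref{hormander}. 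The argument is otherwise entirely parallel to the classical doubling Calder\'on-Zygmund theory of~\cite{coifweiss} and~\cite{SteinHA}.
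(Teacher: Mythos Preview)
Your argument is correct. The paper does not actually give a proof of Theorem~\ref{thmweak}: it simply cites Theorem~3, Chapter~I of~\cite{SteinHA}, relying on the fact that $(X,\rho,\mu_\alpha)$ is doubling (Proposition~\ref{doubling}) and that~\eqref{hormander} is the standard integral H\"ormander condition in that space. What you have written is precisely the explicit execution of that classical Calder\'on--Zygmund argument, specialised to the decomposition of Proposition~\ref{caldzyg}. Your two structural observations---that singleton atoms have $b_Q\equiv 0$ by the mean-zero constraint, and that sectorial atoms $Q=\sect{u}$ require no enlargement because $z\notin Q$ already places $z$ in $X\setminus\sect{u}$, exactly the regime of~\eqref{hormander}---are the right simplifications in this discrete setting and make the argument cleaner than the generic doubling-space version. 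One small point worth noting for completeness: reducing to finitely supported $f$ guarantees that $\cQ$ is finite (each $Q\in\cQ$ must meet $\operatorname{supp} f$, and the $Q$'s are disjoint), so the interchange $\cK b=\sum_{Q}\cK b_Q$ is immediate.
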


	The following result is obtained as byproduct of Proposition~\ref{prophormalpha} and Theorem~\ref{thmweak}. It is a discrete counterpart of the result for (unweighted and holomorphic) Bergman spaces on the hyperbolic disk obtained in~\cite{denghuang}.
	\begin{cor}\label{weakone}
		The Bergman projector $P_\alpha$ is of weak type $(1,1)$, for every $\alpha>1$.
	\end{cor}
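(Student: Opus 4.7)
The plan is to apply Theorem~\ref{thmweak} directly to the Bergman projector, since everything we need has essentially been assembled already. First I would note that, by formula~\eqref{proj} specialized to $\beta=\alpha$, the projector $P_\alpha$ is precisely the integral operator on $L^2_\alpha$ whose kernel is $K_\alpha$:
\[
P_\alpha f(z) = \sum_{x\in X} K_\alpha(z,x) f(x) q^{-\alpha|x|}.
\]
Because $P_\alpha$ is the orthogonal projection onto the closed subspace $\mathcal{A}_\alpha^2\subseteq L^2_\alpha$, it is automatically bounded on $L^2_\alpha$ (with operator norm $1$). So the $L^2_\alpha$-boundedness hypothesis of Theorem~\ref{thmweak} is free.

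Next, the integral Hörmander's condition~\eqref{hormander} required by Theorem~\ref{thmweak} is exactly the condition~\eqref{hormalpha} already established for $K_\alpha$ in Proposition~\ref{prophormalpha}. Thus both hypotheses of Theorem~\ref{thmweak}---boundedness on $L^2_\alpha$ and the Hörmander condition with respect to $\mu_\alpha$---are satisfied by the kernel $K_\alpha$ of $P_\alpha$.

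Applying Theorem~\ref{thmweak} then yields directly that $P_\alpha$ is of weak type $(1,1)$, which is the content of the corollary. There is no real obstacle here: the genuine work has already been carried out in Proposition~\ref{prophormalpha} (verifying the Hörmander estimate via the explicit formula~\eqref{kernelconfluente} for $K_\alpha$), in the construction of the Calderón--Zygmund decomposition (Proposition~\ref{caldzyg}) based on the doubling property of $(X,\rho,\mu_\alpha)$ from Proposition~\ref{doubling}, and in the abstract weak $(1,1)$ statement of Theorem~\ref{thmweak}. The corollary is simply the conjunction of these ingredients.
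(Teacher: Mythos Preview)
Your proposal is correct and matches the paper's approach exactly: the corollary is stated there as an immediate byproduct of Proposition~\ref{prophormalpha} (the H\"ormander condition for $K_\alpha$) and Theorem~\ref{thmweak} (the abstract weak type $(1,1)$ criterion), with the $L^2_\alpha$-boundedness of $P_\alpha$ being automatic since it is an orthogonal projection.
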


	\subsubsection*{Acknowledgment}
	The authors are grateful to Marco~M.~Peloso for 
	suggesting references on the weak type (1,1) boundedness of the Bergman projector in
	the case of the hyperbolic disk, and to Matteo~Levi for useful comments.
	
	\bibliographystyle{plain}
	
\end{document}